\documentclass{article}
\usepackage{graphicx}
\usepackage[a4paper,margin=1in]{geometry}

\usepackage{authblk}

\setcounter{footnote}{0}

\makeatletter
\renewcommand\AB@affilsepx{ \protect\\ } 
\makeatother

\title{Simplicial Complex Emergence on Directed Hypergraphs}
\author[1,2,3]{Christian Kuehn}
\author[1]{Fergal Murphy\thanks{Email: fergal.murphy@tum.de}}
\affil[1]{Technical University of Munich, 
Department of Mathematics,  Munich, Germany}
\affil[2]{Munich Data Science Institute (MDSI),  Munich, Germany}
\affil[3]{Complexity Science Hub Vienna (CSH), Vienna, Austria}

\date{\today}

\usepackage[T1]{fontenc}
\usepackage[utf8]{inputenc}
\usepackage[english]{babel}
\usepackage{microtype} 

\usepackage{amsmath, amssymb, amsthm, amsfonts} 
\usepackage{mathtools} 
\usepackage{bm, mathrsfs, enumitem, float}

\usepackage{graphicx, geometry}
\usepackage{hyperref}           
\usepackage[dvipsnames]{xcolor} 
\definecolor{dimgray}{rgb}{0.41, 0.41, 0.41} 
\usepackage{cleveref}           

\usepackage{booktabs}           
\usepackage{overpic}            
\usepackage{subcaption}         
\usepackage{tikz}
\usepackage{listings}
\usepackage{dirtytalk}
\usepackage{parskip}
\usepackage{csquotes}
\usepackage[maxbibnames=99]{biblatex}

\usetikzlibrary{calc, decorations.markings, arrows.meta, positioning, decorations.pathreplacing} 
\usetikzlibrary{cd} 

\addbibresource{bibliography.bib}


\newcommand{\R}{\mathbb{R}}

\DeclarePairedDelimiter{\norm}{\lVert}{\rVert} 

\usepackage[labelfont=bf]{caption} 
\captionsetup{font=small}
\captionsetup{labelfont=normalfont}
\captionsetup{skip=6pt}
\captionsetup[subfigure]{justification=raggedright, singlelinecheck=false, margin=0pt}



\renewenvironment{proof}{\emph{{{Proof :}}}}{\hfill \qedsymbol}
\numberwithin{equation}{section}

\theoremstyle{plain}
\newtheorem{theorem}{Theorem}[section]
\newtheorem{corollary}[theorem]{Corollary}

\theoremstyle{definition}
\newtheorem{definition}[theorem]{Definition}
\newtheorem{remark}[theorem]{Remark}
\newtheorem{example}[theorem]{Example}

\newcommand{\legendline}[2]{\color{#1}\rule[0.5ex]{4mm}{1mm}\color{black} \small #2}
\newcommand{\legenddashed}[2]{\color{#1}\rule[0.5ex]{1.5mm}{1mm} \rule[0.5ex]{0.5mm}{0mm} \rule[0.5ex]{1.5mm}{1mm}\color{black} \small #2}
\newcommand{\legenddotted}[2]{\color{#1}\rule[0.5ex]{0.5mm}{1mm} \rule[0.5ex]{0.5mm}{0mm} \rule[0.5ex]{0.5mm}{1mm} \rule[0.5ex]{0.5mm}{0mm} \rule[0.5ex]{0.5mm}{1mm}\color{black} \small #2}

\DeclareUnicodeCharacter{0308}{\relax}

\begin{document}
\maketitle

\begin{abstract}
    \noindent We study when co-evolving (or adaptive) higher-order networks defined on directed hypergraphs admit a simplicial description. Binary and triadic couplings are modelled by time-dependent weight tensors. Using  representation theory of the symmetric group $S_k$, we decompose these tensors into fully symmetric, fully antisymmetric, and mixed isotypic components, and track their Frobenius norms to define three asymptotic regimes and a quantitative notion of convergence. In the symmetric (resp. antisymmetric) limit, we certify emergence and stability of simplicial complexes via a local boundary test and interior drift conditions that enforce downward-closure; in the mixed limit, we show that the minimal faithful object is a semi-simplicial set. We illustrate the theory with simulations that track the isotypic Frobenius norms and the higher-order structure.  Practically, our work provides rigorous conditions under which homological tools are justified for adaptive higher-order systems.

\end{abstract}


\section{Introduction}

\subsection{Motivation and Overview}
Complex networks have been immensely successful in modelling a variety of systems; however, many real-world systems involve higher-order interactions that extend beyond pairwise couplings \cite{Battiston2020}, \cite{battiston2022higher}, \cite{bick2023higher}. Biological, social, and technological networks frequently exhibit group interactions, where collective effects emerge from triadic or even higher-order relationships. For example, in social systems, contagion and opinion dynamics often require group influence (so-called complex contagion) rather than just dyadic contacts  \cite{golovin2024polyadic}, \cite{Iacopini2019}, \cite{jardon2020fast} \cite{moussaid2013social}, \cite{monsted2017evidence}. Similarly, in neural systems, cognitive functions can involve the coordinated co-activation of groups of neurons, a phenomenon that cannot be captured by pairwise connections alone \cite{Petri2014,Reimann2017}. In response to the need to account for such multi-node dependencies, higher-order network models have been developed. One prominent example is the generalisation of the Kuramoto model to incorporate multi-body coupling: beyond the classical all-to-all sinusoidal coupling, higher-order Kuramoto models include terms coupling three or more oscillators at a time. These extensions lead to novel synchronisation patterns not present in pairwise network models, such as abrupt (explosive) transitions to synchronisation and multi-scale phase-locking behaviour \cite{delabays2019dynamical,dutta2023impact,gao2023dynamics,munyayev2023cyclops,xu2021spectrum}.

While hypergraphs provide a very general representation for any collection of multi-node interactions, simplicial complexes impose an important constraint reflecting many real systems: if a group interaction (simplex) exists, then all lower-order interactions among that group’s members also exist. In other words, a 2-simplex (triangle) in a simplicial complex implies its three edges are present, a 3-simplex (tetrahedron) implies all its triangles and edges are present, and so on. This “downward closure” property mirrors many real hierarchical structures. For instance, functional assemblies of neurons tend to have all pairwise connections among them, and in smaller social groups, if a triad interacts collectively, usually the constituent pairs also interact. By modelling higher-order interactions with simplicial complexes, one can enforce that group interactions are built atop their sub-interactions, which is both conceptually appealing and mathematically powerful. It enables the use of algebraic-topological tools to characterise structure and dynamics. For example, one can study the homology of simplicial complexes to identify high-dimensional holes or cycles in network data (invisible to edge-only analysis). Likewise, the Hodge Laplacian (a higher-order generalisation of the graph Laplacian) governs diffusion and synchronisation on $k$-simplices (groups of nodes), capturing dynamical modes involving loops and multi-node coordination beyond simple pairwise paths \cite{carletti2021random,carletti2020random,jost2021normalized,mulas2021spectral,mulas2022random}. Recent studies have shown that the presence of a simplicial complex structure (as opposed to an arbitrary hypergraph) can qualitatively alter collective dynamics. For instance, higher-order interactions may enhance or impede synchronisation in markedly different ways depending on whether the network has the simplicial inclusion property or not. These insights highlight that modelling choices at the higher-order level (hypergraph vs. simplicial complex) profoundly influence system behaviour, providing one motivation for our focus on when and how simplicial complex structures can emerge in adaptive hypergraph dynamics \cite{millan2020explosive,zhang2023higher}.

Indeed, another critical aspect of many complex systems is adaptivity: the network topology itself co-evolves with the node dynamics. In an adaptive network, links can appear, disappear, or change weight based on the state of the nodes, creating a feedback loop between structure and dynamics. This co-evolution is widely observed. In epidemics, susceptible individuals rewire contacts to avoid infection, thereby rewiring the network as the disease spreads \cite{Gross2006}. In neural circuits, synaptic connections strengthen or weaken depending on activity (Hebbian plasticity and other forms of learning) \cite{morrison2008phenomenological}. In infrastructure and transportation networks, heavily used routes are built or reinforced while under-utilised connections decay \cite{xie2007modeling}. Adaptive network models capture this by allowing the graph’s edges to change as a function of node states, which can yield rich phenomena such as the formation of robust network communities, oscillatory dynamics, or sudden tipping points in network topology, \cite{bdolach2025tipping,berner2023adaptive,gross2008adaptive,ha2016synchronization}.
 However, most existing adaptive network models have considered only pairwise interactions, focusing on the creation or deletion of links between individual nodes. Classic examples include adaptive epidemics on networks, where links rewire based on infection status, or adaptive consensus and oscillator networks, where connection weights adjust according to synchronisation error, \cite{berner2023adaptive}. These studies demonstrated that adaptivity can produce assortative link patterns, multi-stability, and abrupt transitions even in pairwise networks. Yet they neglect the possibility that higher-order group interactions (hyperedges) might also form or dissolve in response to the system’s state. In reality, structural changes can occur at multiple scales: entire group interactions (a meeting, a team, a synchronised neural ensemble) may appear or break apart depending on the dynamics. For example, during a contagion event, not only might individuals sever contacts, but group gatherings could be cancelled, removing higher-order contagion pathways beyond pairwise contacts. Likewise, in a learning brain, a cohort of neurons might wire together or disband as a group. Ignoring these higher-order topological adaptations leaves out an important dimension of co-evolution. By studying simplicial complex formation in an adaptive setting, we aim to capture how both individual links and multi-node interaction patterns reorganise in concert with nodal dynamics. This provides a more comprehensive view of phenomena like epidemics (where avoidance of group events is as important as breaking individual contacts) and neural learning (where ensembles of neurons rewire en masse). 
 
 In the following, we build a modelling framework to investigate adaptive higher-order networks and identify the conditions under which their self-organised dynamics give rise to an emergent simplicial complex structure. For example, in works such as  \cite{anwar2024synchronization,dutta2024transition,skardal2020higher}, a unifying goal is to study Kuramoto-type dynamics on assumed higher-order structures such as simplicial complexes. It is the chief aim of this work to give a structural and dynamical foundation for when these higher-order simplicial structures exist at all.
 
\subsection{Outline and Summary of Main Results}

Our aim is to unify three axes that are often treated separately: higher-order interactions, adaptivity of the coupling structure, and rigorous simplicial complex emergence. We adopt a tensor-first viewpoint on directed hypergraphs and use representation theory of the symmetric group to track how symmetry and adaptivity co-produce combinatorial structure.  

The main contributions of this work are as follows:
\begin{itemize}
  \item We define adaptive triadic dynamical systems $(\textbf{x},A^{(1)},A^{(2)})$ and introduce directed hypergraphs as tensorial objects, together with a representation-theoretic splitting of adjacency tensors into symmetric, antisymmetric, and mixed components. 

  \item By monitoring the Frobenius norms of these components, we identify three long-term symmetry regimes and show that each determines a distinct class of emergent simplicial object. 
  \item \textbf{Main Theorems.} We prove three main emergence results, which can be summarised informally as follows:
  \begin{enumerate}
    \item \emph{Theorem \ref{thm:retention} (Symmetric regime):} once the system enters the symmetric regime, it retains an unoriented simplicial complex structure for all future time.  
    \item \emph{Theorem \ref{thm: oriented retention theorem} (Antisymmetric regime):} once in the antisymmetric regime, it retains an oriented simplicial complex structure.  
    \item \emph{Theorem \ref{thm: mixed regime retention main theorem} (Mixed regime):} in the absence of symmetry, the minimal faithful combinatorial object is a semi-simplicial set, and retention holds under analogous conditions.  
  \end{enumerate}

\end{itemize}
The proofs combine local boundary tests with interior drift estimates to enforce downward closure, providing sufficient conditions under which the emergent simplicial structure remains invariant.  
 Moreover, by using Frobenius-norm order parameters tied to representation-theoretic components, we provide a quantitative notion of convergence toward each regime. Numerical simulations are used to practically demonstrate and cross-validate the analytic predictions. In the numerics we also track the evolution of the symmetry norms across regimes. Taken together, these results give the first rigorous demonstration of emergent simplicial structure in adaptive higher-order network dynamics.  

Section 2 introduces the model and tensor decomposition. Section 3 develops the symmetric theory and illustrates with examples.  Section 4 studies the antisymmetric and mixed regime. Section 5 concludes with a discussion and outlook.

\section{Asymptotic Regimes of Coevolving Hypergraphs.}

\subsection{Background and Canonical Model}

The general structure of a coevolving network dynamical system with $n \in \mathbb{N}$ particles on a hypergraph with higher-order interactions can be described as follows. Each particle $i \in \{1, \dots , N\}$ carries a dynamical state $x_i(t) \in \mathcal{M},$ where $\mathcal{M}$ is a smooth manifold (often $\mathcal{M} = \mathbb{S}^1$ in Kuramoto-type models, or $\mathcal{M} = \mathbb{R}^d$ in Euclidean cases). Each particle also has an intrinsic parameter $\omega_i \in \Omega \subseteq \mathbb{R}^d$; in the most common case $d = 1$, so $\omega_i$ is simply a scalar frequency. For every order $m \geq 1$ we introduce a time-dependent weight tensor 
$$
A^{(m)}(t)=\bigl(A^{(m)}_{i_0\dots i_m}(t)\bigr)_{1\le i_0,\dots ,i_m\le N}\in
\bigl(\mathbb R^{N}\bigr)^{\otimes(m+1)},
$$
with no a-priori symmetry, where $t\in\R$ denotes time. Let $\mathbf{x} = (x_1, \dots , x_N)^\intercal$ . Such systems are governed by ordinary differential equations (ODEs) on the nodes and tensor entries as follows, 
$$
\dot x_i = f_i\bigl(\omega_i, \textbf{x}, \{A^{(m)}\}_{m\ge1}\bigr),\qquad
\dot A^{(m)}_{i_0\dots i_m}=g^{(m)}_{i_0\dots i_m}\bigl(A^{(m)},\mathbf x\bigr),
$$
where overdot denotes the time derivative, $f_i$ are vector fields for the phase space dynamics of each node, and the functions $g_m $ describe the evolution of the network topology. Throughout, we will mainly deal with systems where $1 \leq m \leq 2$. Thus, the dynamical systems of main interest to us are defined on a network with higher-order interactions encoded in the matrix $A^{(1)}$ and tensor $A^{(2)}.$ Since these tensors change dynamically, the underlying hypergraph will therefore also have a coevolving topology. \\

\begin{definition}[Adaptive, Triadic, Network Dynamical System]\label{def: adaptive triadic NDS}
    By an adaptive, triadic network dynamical system, we mean a system as above but with $1 \leq m \leq 2.$ That is, there are  $N \geq 3$ nodes on an evolving hypergraph, with dynamical states given by  $x_i \in \mathcal{M}$, where $\mathcal{M}$ is a smooth manifold, and hypergraph binary and triadic connections encoded in a weighted matrix $A^{(1)} \in \mathbb{R}^{N \times N}$ and a tensor $A^{(2)} \in \left( \mathbb{R}^N\right)^{\otimes 3}$, governed by equations 
$$\dot x_i = f_i\bigl(\omega_i, \textbf{x}, A^{(1)} , A^{(2)}\bigr),\qquad
\dot A^{(1)}_{ij}=g^{(1)}_{ij}\bigl(A^{(1)},\mathbf x\bigr), \qquad \dot A^{(2)}_{ijk}=g^{(2)}_{ijk}\bigl(A^{(2)},\mathbf x\bigr),$$
where each $\omega_i \in \mathbb{R}$ is an intrinsic parameter depending on $x_i.$
We denote such an adaptive, triadic, network dynamical system by  a triple $\mathcal{H}(t) = (\textbf{x}, A^{(1)}, A^{(2)} ), $ where $\textbf{x} \in \mathbb{R}^N$ are the node variables, and $A^{(1)} \in \mathbb{R}^{N \times N}$, $A^{(2)} \in \left( \mathbb{R}^N \right)^{\otimes 3}$ are the weighted adjacency matrix and tensor respectively.\\

\end{definition}

\begin{remark}
    Unless stated otherwise, we assume that the initial data $
(\textbf{x}(0),A^{(1)}(0),A^{(2)}(0))$
is randomly distributed according to a probability measure $\mu$ on 
$\mathcal{X} := \mathbb{R}^N \times (\mathbb{R}^{N})^{\otimes 2} \times (\mathbb{R}^{N})^{\otimes 3}$ 
which is absolutely continuous with respect to Lebesgue measure. 
Equivalently, $\mu$ admits a Radon-Nikodym derivative  $f \in L^1$, 
so that $\textnormal{d}\mu = f \, \textnormal{d} \lambda$ with $\lambda$ the product Lebesgue measure. 
The standard example is the Gaussian distribution, or uniform distribution on bounded subsets of $\mathcal{X}$. \\
Furthermore, in general, we restrict the adaptive laws $g^{(1)}_{ij}$ and $g^{(2)}_{ijk}$ to depend only on local variables, i.e. the states and interactions associated with the participating nodes $(i,j)$ or $(i,j,k).$
\end{remark}

The prototypical example of such an adaptive triadic network dynamical system is given by a variant of the Kuramoto model, and is ubiquitous in our study since it is the easiest non-trivial, adaptive, triadic higher-order network dynamical system to define. \\

\begin{example}[Generalised, Adaptive, Triadic Higher–Order Kuramoto Model]\label{def:ATKHH}
Let $N\in\mathbb{N}$ and let $\theta=\theta_i(t)\in \mathbb{S}^1$ denote the position of particle $i$. The dynamics of the particles and the time–evolving binary and triadic coupling tensors are given by

$$
\left\{
\begin{aligned}
\dot{\theta}_i \;&=\; \omega_i
+ \frac{1}{N}\sum_{j=1}^N A^{(1)}_{ij}\, f_1(\theta_i,\theta_j)
+ \frac{1}{N^2}\sum_{j=1}^N \sum_{k=1}^N A^{(2)}_{ijk}\, f_2(\theta_i,\theta_j,\theta_k), \\[0.4em]
\dot{A}^{(1)}_{ij} \;&=\; g^{(1)}\big(A^{(1)},A^{(2)},\theta\big), \\[0.2em]
\dot{A}^{(2)}_{ijk} \;&=\; g^{(2)}\big(A^{(1)},A^{(2)},\theta\big),
\end{aligned}
\right.
$$

where $\theta=(\theta_1,\dots,\theta_N)\in 
\left( \mathbb{S}^1 \right)^N$ and:
\begin{itemize}
  \item $\omega_i\in\mathbb{R}$ is the intrinsic frequency of oscillator $i$;
  \item $f_1: \mathbb{R}^2 \to\mathbb{R}$ and $f_2: \mathbb{R}^3 \to \mathbb{R}$ are coupling functions;
  \item $A^{(1)}(t)=\big(A^{(1)}_{ij}(t)\big)_{1\leq i,j\leq N}\in\mathbb{R}^{N\times N}  \cong \left( \mathbb{R}^N \right)^{\otimes 2}.$ is the time–evolving weighted adjacency matrix;
  \item $A^{(2)}(t)=\big(A^{(2)}_{ijk}(t)\big)_{1\leq i,j,k\leq N}\in\mathbb{R}^{N\times N\times N} \cong \left(\mathbb{R}^N\right)^{\otimes 3}$ is the time–evolving rank-$3$ tensor of triadic weights;
  \item $g^{(1)}$ and $g^{(2)}$ are the componentwise adaptation maps on the binary and triadic connections. \\
\end{itemize}

If we assume $A^{(2)}=0$ and that $A^{(1)}_{ij}=K$ for all $i\neq j$, where $K>0$ is a constant
coupling parameter, then the model reduces to the classical Kuramoto system with all-to-all
interactions,
$$
\dot{\theta}_i = \omega_i + \frac{K}{N}\sum_{j=1}^N f_1(\theta_i,\theta_j).
$$
In the standard case $f_1(\theta_i,\theta_j)=\sin(\theta_j-\theta_i)$ this is precisely the
well-studied Kuramoto model. To quantify the degree of collective synchrony among the oscillators, the complex order parameter  is used, given by

$$r e^{i \psi} := \frac{1}{N}\sum_{j = 1}^Ne^{i \theta_j}.$$
Here, $r$ represents the magnitude of the macroscopic coherence ($0 \leq r \leq 1)$ and $\psi$ is the average phase of the population of oscillators. If $r \approx 0$, the phases are uniformly distributed corresponding to an incoherent or desynchronised state, and if $r = 1$, the oscillators are perfectly in phase, representing full synchronisation.
As $N \to \infty$, one may describe the system in terms of the continuum distribution 
of oscillator phases, leading to a nonlinear McKean–Vlasov type (or Vlasov/Fokker–Planck) 
equation for the probability density of phases 
\cite{acebron2005kuramoto, strogatz2000kuramoto,}. 
In this mean–field formulation, Kuramoto conjectured that for unimodal, symmetric 
frequency distributions $g(\omega)$, there exists a sharp critical coupling
\[
  K_c \;=\; \frac{2}{\pi g(0)},
\]
such that the incoherent (uniform) state is stable for $K<K_c$, while for $K>K_c$ 
a branch of partially synchronised states bifurcates. 
This conjecture was rigorously established by Chiba 
\cite{chiba2015proof}, who applied center manifold reduction to the continuum PDE, 
thereby providing the first proof of the Kuramoto transition in the large $N$ 
limit.\\

\end{example}

\begin{remark}[Directed hypergraph conventions]
In the combinatorial and computer science literature, a directed hypergraph is usually 
defined in a set--theoretic fashion: each hyperedge is a pair $(T,H)$, where $T$ is a set of 
tail (input) nodes and $H$ is a set of head (output) nodes, with possible variants that 
also allow auxiliary or intermediate nodes. This convention is well-suited to problems in logic, 
database theory, and transportation analysis, where the distinction between inputs and outputs 
is fundamental, \cite{gallo1993directed}. 

There are other, differing definitions of what it means for a hypergraph to be directed, usually depending on the application considered. For instance, in \cite{DUCOURNAU201491} a directed hypergraph is taken to mean an ordered pair 
$
\overrightarrow{H} = (V, \overrightarrow{E}),
$
where $V$ is a finite set of vertices and $\overrightarrow{E} = \{\overrightarrow{e}_i : i \in I\}$ is a set of hyperarcs indexed by a finite set $I = \{1,\dots,M\}$, with $M = |\overrightarrow{E}|$. Each hyperarc $\overrightarrow{e}_i$ is written as
$
\overrightarrow{e}_i = (e_i^+, e_i^-),
$
where $e_i^+ \subseteq V$ is the tail of the hyperarc, and $e_i^- \subseteq V$ is the head of the hyperarc. Moreover, the authors require that 
$ e_i^+ \neq \emptyset, e_i^- \neq \emptyset,$ and $ e_i^+ \cap e_i^- = \emptyset.$ The vertices of a hyperarc are $e_i = e_i^+ \cup e_i^-$.   One can recover the corresponding undirected hypergraph by simply taking $H = (V,E)$ with $E = \{ e_i \mid i \in I \}.$ It is shown in \cite{DUCOURNAU201491} that a formulation of random walks on these directed hypergraphs can be introduced that is relevant to image processing.  One sees that this definition of directed hypergraph and the traditional set-theoretic one from \cite{gallo1993directed} are closely related but not identical.  

Given a triadic, adaptive dynamical system  $\mathcal{H} = (\textbf{x},A^{(1)},A^{(2)})$ as in Definition \ref{def: adaptive triadic NDS}, it is not exactly clear how one can recover a directed Hypergraph one commonly finds in the literature, such as the ones detailed above. The problem lies in the mismatch between our tensor based formalism, and the set-theoretic formalism commonly used in the existing hypergraph literature.  Indeed, this lack of convertibility between the tensor based approach and a set-theoretic description was also discussed in \cite{contreras2024beyond}, where, motivated by eigenvector centrality equations,  the authors introduce the notion of \emph{heterogeneous} hypergraphs. These are hypergraphs that are defined algebraically, that is, through a tensor. Importantly, they contain the set-theoretic definitions above as a special cases under certain symmetry requirements. Since our adaptive network dynamical systems $(\textbf{x},A^{(1)},A^{(2)})$ are tensors, which we want to treat as directed hypergraphs and study simplicial complex emergence on, we therefore make the following definition that will be used throughout this paper. \\

\end{remark}

\begin{definition}[Directed hypergraph; algebraic form]
Let $N\in\mathbb{N}$ and $V=\{1,\dots,N\}$.  
A \emph{directed hypergraph} on $V$ is specified by a collection of adjacency tensors
$$
A^{(m)} \in (\mathbb{R}^{N})^{\otimes m+1}, \qquad m=1,2,3,\dots,
$$
where the entry $A^{(m)}_{i_1 \dots i_m}$ represents the weight of an $m$--ary hyperedge between the vertices $(i_1,\dots,i_m)$.  

No symmetry is assumed on the indices of $A^{(m)}$.  
\begin{itemize}
    \item If $A^{(m)}$ is invariant under all index permutations, then the hyperedges are \emph{undirected}.  
    \item If $A^{(m)}$ is not fully symmetric, we call the resulting hyperedges \emph{directed}, the directedness being encoded by which permutations preserve or break equality of tensor entries.  \\
\end{itemize}

\end{definition}

\begin{remark}[Adaptive triadic systems as algebraic directed hypergraphs]
The adaptive triadic network dynamical system $\mathcal{H}(t)=(\textbf{x},A^{(1)},A^{(2)})$ of Definition \ref{def: adaptive triadic NDS}
naturally gives rise to a directed hypergraph in the algebraic sense.  
At each time $t$, the weighted adjacency matrix $A^{(1)}(t)\in \mathbb{R}^{N \times N}$ and the triadic adjacency tensor 
$A^{(2)}(t)\in (\mathbb{R}^N)^{\otimes 3}$ specify the collection of binary and ternary hyperedges through their tensor entries.  

In this framework, a directed hypergraph is nothing more than the data of these tensors, with \emph{directedness} encoded
by the lack of full permutation symmetry in their indices:
\begin{itemize}
    \item if $A^{(m)}$ is invariant under all permutations of indices, the corresponding hyperedges are undirected;
    \item if $A^{(m)}$ is not fully symmetric, the hyperedges are directed, with the asymmetry specifying how roles are distinguished among the vertices.
\end{itemize}
Thus our adaptive dynamics do not produce just one fixed type of directed hypergraph (such as the classical head--tail version),
but a whole family of algebraically defined directed hypergraphs whose structure can evolve over time.  
In particular, depending on which symmetry class the tensors flow into, one recovers unoriented simplicial complexes (symmetric regime), oriented simplicial complexes (antisymmetric regime), or more general simplicial sets (mixed regime).  \\
\end{remark}

\color{black}

\begin{example}

Suppose we have an adaptive coevolving network dynamical system with the following dynamics on the edges. 

$$\dot A^{(1)}_{ij} = \begin{cases}
    -\delta_{1}\bigl(A^{(1)}_{ij} - \cos(\theta_{i}-\theta_{j})\bigr) & i \neq j\\
    0 & \text{otherwise}
\end{cases}$$

where $\delta_1 > 0$ is some parameter. Let $\Delta_{ij}$ be the difference between $A^{(1)}_{ij}$ and $A^{(1)}_{ji},$ i.e.  \break $ 
  \Delta_{ij}(t) \;:=\; A^{(1)}_{ij}(t) - A^{(1)}_{ji}(t).$
Differentiating and using the fact that
$\cos(\theta_{i}-\theta_{j}) = \cos(\theta_{j}-\theta_{i})$ gives
$$
  \dot\Delta_{ij}
  = \dot A^{(1)}_{ij} \;-\; \dot A^{(1)}_{ji}
  = -\delta_{1}\bigl(A^{(1)}_{ij}-\cos(\theta_{i}-\theta_{j})\bigr)
    +\delta_{1}\bigl(A^{(1)}_{ji}-\cos(\theta_{j}-\theta_{i})\bigr)
  = -\delta_{1}\,\Delta_{ij}.
$$
Hence $  \Delta_{ij}(t) = \Delta_{ij}(0)\,e^{-\delta_{1}t}, $ and so the difference decays to zero. In this case, the best we can say is that $A^{(1)}_{ij}$ and $A^{(1)}_{ji}$ converge to one another. However, unless the initial conditions are the same, they will not be exactly equal for all time $t \geq 0.$\\

\end{example}

\begin{remark}
    The above example shows that on the level of binary interactions, the directed graph will enter an eventually symmetric regime, that is, one in which the difference between $A^{(1)}_{ij}$ and $A^{(1)}_{ji}$ becomes negligible in the limit. If one replaces $\cos(\theta_i - \theta_j)$ above with a $\sin(\theta_i - \theta_j)$, then due to $\sin$ being an odd function, one can see that as $t \rightarrow \infty$, we would have $A_{ij}^{(1)}(t) \rightarrow - A_{ji}^{(1)}(t).$ The graph would in this case enter an antisymmetric regime. If we were simply interested in the formation of $1$-simplices and thereby only looking at the directed graph structure, one sees from this example the importance of entering either an eventually \emph{symmetric} or \emph{antisymmetric} regime. In the former, one can study standard (unoriented) simplicial complex emergence; in the latter, one can study oriented simplicial complex emergence.
\end{remark}

The higher–order case amplifies this point: for $k\geq 3$ there is no canonical input–output labelling, so conclusions must be invariant under permutations of tensor slots. This naturally leads us to organise $A^{(k)}$ by the $S_{k+1}$–symmetries of its indices.
In the next section, we set up the $S_k$ action on our directed hypergraphs and introduce the canonical components that will serve as relabelling–invariant order parameters for the subsequent convergence and simplicial representation results.

\subsection{Tensor Spaces and Symmetric Group Actions}

We begin by formally defining the spaces that contain our edge and hyperedge data, and the natural actions of symmetric groups on them.

The set of all possible edges between $N$ vertices is described by a matrix 
$$
A^{(1)} = \left( A^{(1)}_{ij} \right)_{1 \leq i,j \leq N } \in\mathbb{R}^{N \times N}.
$$
This space of $N \times N$ real matrices is isomorphic to the tensor product of the vector space $\mathbb{R}^N$ with itself, which we denote as $V_2$:
$$
V_2 := \mathbb{R}^{N \times N} \cong (\mathbb{R}^N)^{\otimes 2}.
$$
Similarly, the set of all possible triangular hyperedges (or 2-simplices) is described by a rank-3 tensor
$$
A^{(2)} = \left( A^{(2)}_{ijk} \right)_{1 \leq i,j,k \leq N },
$$
which resides in the space we denote by $V_3$:
$$
V_3 := (\mathbb{R}^N)^{\otimes 3}.
$$

For the space $V_2$, the indices $(i, j)$ can be permuted by the symmetric group on two elements, $S_2 = \{e, (12)\}$, where $e$ is the identity and $(12)$ is the transposition swapping 1 and 2. We define a map $\rho^2: S_2 \rightarrow GL(V_2)$ that describes how a permutation $\pi \in S_2$ acts on a matrix $A^{(1)} \in V_2$. For a given $\pi$, the action on $A^{(1)}$ produces a new matrix whose $(i,j)$-th entry is determined by permuting the indices of $A^{(1)}$ according to $\pi^{-1}$:
$$
\rho^2(\pi) \left( A^{(1)} \right)_{ij} := A^{(1)}_{\pi^{-1}(i) \pi^{-1}(j)}.
$$
This map $\rho^2$ is a \emph{group representation}, which is a homomorphism from $S_2$ to the general linear group of $V_2$. The use of $\pi^{-1}$ is a standard convention to ensure that we define a left action.

Analogously, for the space $V_3$, the indices $(i, j, k)$ can be permuted by the symmetric group on three elements, $S_3$. We define a representation $\rho^3: S_3 \rightarrow GL(V_3)$ in the exact same manner:
$$
\rho^3(\pi) \left( A^{(2)} \right)_{ijk} := A^{(2)}_{\pi^{-1}(i) \pi^{-1}(j) \pi^{-1}(k)}.
$$

\begin{definition}[Representation and Irreducibility]
A \emph{representation} of a group $G$ on a vector space $V$ is a group homomorphism $\rho: G \rightarrow GL(V)$. A subspace $W \subseteq V$ is called a \emph{subrepresentation} if it is invariant under the action of $G$, i.e., $\rho(g)w \in W$ for all $g \in G$ and $w \in W$. A representation $V$ is said to be \emph{irreducible} if its only subrepresentations are $\{0\}$ and $V$ itself.
\end{definition}

From the representation theory of $S_k$ and Schur-Weyl duality, it is known $V_k$ decomposes into a direct sum of irreducible representations of $S_k,$\cite{fulton2013representation}, $\S 6.1.$ 
For $V_2$, the decomposition (up to isomorphic copies - also known as the isotypic decomposition), is given by
$$
V_2 = \mathrm{Sym}^2(\mathbb{R}^N) \oplus \Lambda^2 \mathbb{R}^N, 
$$
with idempotent projections given by 
$$
A_{\mathrm{sym}}:=P_{\mathrm{sym}}A
      =\tfrac12\bigl(A+A^{\!\top}\bigr)\in \text{Sym}^2(\mathbb{R}^N), \qquad
A_{\mathrm{alt}}:=P_{\mathrm{alt}}A
      =\tfrac12\bigl(A-A^{\!\top}\bigr)\in \Lambda^{2}(\mathbb{R}^N).
$$
We see that this decomposes a matrix into a purely symmetric component and a purely skew-symmetric component. Similarly, for $V_3$ the decomposition is given by

$$
V_{3}
   \;=\;
   \underbrace{\text{Sym}^{3}(\mathbb{R}^{N})}_{\text{fully symmetric}}
   \;\oplus\;
   \underbrace{\Lambda^{3}(\mathbb{R}^{N})}_{\text{fully antisymmetric}}
   \;\oplus\;
   \underbrace{V^{(2,1)}}_{\text{mixed symmetry}}.
$$

If we write 
$$
\,A^{(2)}_{\mathrm{sym}}:=P_{\mathrm{sym}}A^{(2)}\,,\qquad
\,A^{(2)}_{\mathrm{alt}}:=P_{\mathrm{alt}}A^{(2)}\,,\qquad
\,A^{(2)}_{\mathrm{mix}}:=P_{\mathrm{mix}}A^{(2)}\,.
$$
for the projection operators then the explicit formulas can for these operators can be given by 

$$
\begin{aligned}
\bigl(A^{(2)}_{\mathrm{sym}}\bigr)_{ijk}
 &=
 \tfrac16\!
 \Bigl(
   A_{ijk}+A_{jik}+A_{kij}+A_{ikj}+A_{jki}+A_{kji}
 \Bigr),\\[2pt]
\bigl(A^{(2)}_{\mathrm{alt}}\bigr)_{ijk}
 &=
 \tfrac16\!
 \Bigl(
   A_{ijk}-A_{jik}-A_{kij}-A_{ikj}+A_{jki}+A_{kji}
 \Bigr),\\[2pt]
\bigl(A^{(2)}_{\mathrm{mix}}\bigr)_{ijk}
 &=
   \frac{2}{3}A_{ijk} - \frac{1}{3}\left(A_{ikj} + A_{jki}\right) 
\end{aligned}
$$

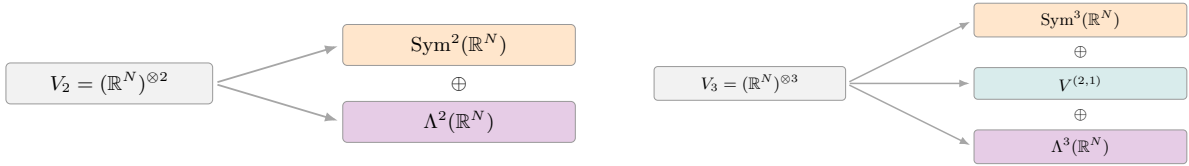
\begin{figure}[h]
\centering
\begin{minipage}{0.48\textwidth}
  \centering
  \scalebox{0.85}{%
  \begin{tikzpicture}[
    font=\small,
    box/.style={draw=gray!80, rounded corners=2pt, inner sep=4pt, minimum width=32mm, minimum height=6mm, fill=gray!10},
    comp/.style={draw=gray!80, rounded corners=2pt, inner sep=4pt, minimum width=36mm, minimum height=6mm},
    arr/.style={-{Latex[length=2mm]}, thick, shorten >=2pt, shorten <=2pt, draw=gray!70},
    symm_comp/.style={comp, fill=orange!20},
    anti_comp/.style={comp, fill=violet!20}
  ]
    \node[box] (V2) {$V_2=(\mathbb{R}^N)^{\otimes 2}$};

    \node[symm_comp, right=20mm of V2, yshift=6mm] (sym2) {$\mathrm{Sym}^2(\mathbb{R}^N)$};
    \node[anti_comp, right=20mm of V2, yshift=-6mm] (alt2) {$\Lambda^2(\mathbb{R}^N)$};

    \draw[arr] (V2.east) -- (sym2.west);
    \draw[arr] (V2.east) -- (alt2.west);

    \node at ($(sym2.south)!0.5!(alt2.north)$) {$\oplus$};
  \end{tikzpicture}
  } 
\end{minipage}%
\hfill
\begin{minipage}{0.48\textwidth}
  \centering
  \scalebox{0.7}{%
  \begin{tikzpicture}[
    font=\small,
    box/.style={draw=gray!80, rounded corners=2pt, inner sep=4pt, minimum width=36mm, minimum height=6mm, fill=gray!10},
    comp/.style={draw=gray!80, rounded corners=2pt, inner sep=4pt, minimum width=40mm, minimum height=6mm},
    arr/.style={-{Latex[length=2mm]}, thick, shorten >=2pt, shorten <=2pt, draw=gray!70},
    symm_comp/.style={comp, fill=orange!20},
    anti_comp/.style={comp, fill=violet!20},
    mix_comp/.style={comp, fill=teal!15}
  ]
    \node[box] (V3) {$V_3=(\mathbb{R}^N)^{\otimes 3}$};

    \node[symm_comp, right=24mm of V3, yshift=12mm] (sym3) {$\mathrm{Sym}^3(\mathbb{R}^N)$};
    \node[mix_comp, right=24mm of V3] (mix3) {$V^{(2,1)}$};
    \node[anti_comp, right=24mm of V3, yshift=-12mm] (alt3) {$\Lambda^3(\mathbb{R}^N)$};

    \draw[arr] (V3.east) -- (sym3.west);
    \draw[arr] (V3.east) -- (mix3.west);
    \draw[arr] (V3.east) -- (alt3.west);

    \node at ($(sym3.south)!0.5!(mix3.north)$) {$\oplus$};
    \node at ($(mix3.south)!0.5!(alt3.north)$) {$\oplus$};
  \end{tikzpicture}
  } 
\end{minipage}

\caption{Diagrams illustrating the orthogonal splitting of $V_2$ and $V_3$ into their isotypic components.}
\end{figure}

To quantify the symmetry properties of our tensors we employ the
\emph{Frobenius inner product}, which is orthogonal with respect to the splittings above.  For $V_2 = \mathbb{R}^{N \times N}$, the Frobenius inner product is given by 
$$
\langle A,B\rangle_F
   \;=\;
   \sum_{i,j=1}^{N}A_{ij}B_{ij}
   \;=\;
   \operatorname{Tr}(A^{\mathsf T}B).
$$
For $V_3=(\mathbb{R}^N)^{\otimes 3}$ we use
$$
\langle A,B\rangle_F
   \;=\;
   \sum_{i,j,k=1}^{N}A_{ijk}B_{ijk}.
$$
The induced norm is the \emph{Frobenius norm},
$\|A\|_F:=\sqrt{\langle A,A\rangle_F}$. A key fact is that the isotypic components are orthogonal with respect
to $\langle\,,\rangle_F$; e.g.\ for any
$S\in Sym^2(\mathbb{R}^N)$ and $L\in\Lambda^2(\mathbb{R}^N)$ we have
$\langle S,L\rangle_F=0$.  The same holds for the $k=3$ splitting. Orthogonality implies, via the Pythagorean theorem, that the squared
norm of a tensor equals the sum of the squared norms of its symmetry
components.  For
$A^{(1)}(t)=A^{(1)}_{\mathrm{sym}}(t)+A^{(1)}_{\mathrm{alt}}(t)$,
$$
\|A^{(1)}(t)\|_F^2
  \;=\;
  \|A^{(1)}_{\mathrm{sym}}(t)\|_F^2
  +
  \|A^{(1)}_{\mathrm{alt}}(t)\|_F^2.
$$
Likewise, for
$A^{(2)}(t)=A^{(2)}_{\mathrm{sym}}(t)
            +A^{(2)}_{\mathrm{alt}}(t)
            +A^{(2)}_{\mathrm{mix}}(t)$,
$$
\|A^{(2)}(t)\|_F^2
  \;=\;
  \|A^{(2)}_{\mathrm{sym}}(t)\|_F^2
  +
  \|A^{(2)}_{\mathrm{alt}}(t)\|_F^2
  +
  \|A^{(2)}_{\mathrm{mix}}(t)\|_F^2.
$$

Given the above orthogonal decompositions,  can now classify the long-term behaviour of our hypergraph by
tracking how the Frobenius norm is distributed among the symmetry
components as $t\to\infty$. \\

\begin{definition}[Asymptotic Regime]\label{def:asymptotic_regime}
Suppose we have an adaptive, triadic network dynamical system such as in Definition \ref{def:ATKHH}. We then say the hypergraph is either asymptotically symmetric, asymptotically antisymmetric, or mixed, depending on which of the following mutually exclusive criteria happen as $t \rightarrow \infty$.
\begin{itemize}
\item \textbf{Asymptotically Undirected/Symmetric.}
  The system is dominated by fully symmetric interactions; all
  antisymmetric and mixed parts vanish:
 $$
  \|A^{(1)}_{\mathrm{alt}}(t)\|_F\to0,
  \quad
  \|A^{(2)}_{\mathrm{alt}}(t)\|_F\to0,
  \quad
  \|A^{(2)}_{\mathrm{mix}}(t)\|_F\to0.
  $$

\item \textbf{Asymptotically Oriented/Antisymmetric.}
  The system is dominated by fully antisymmetric interactions; all
  symmetric and mixed parts vanish:
  $$
  \|A^{(1)}_{\mathrm{sym}}(t)\|_F\to0,
  \quad
  \|A^{(2)}_{\mathrm{sym}}(t)\|_F\to0,
  \quad
  \|A^{(2)}_{\mathrm{mix}}(t)\|_F\to0.
  $$

\item \textbf{Mixed regime.}
  At least one of the mixed-symmetry norms remains non-zero
  asymptotically.\\

\end{itemize}

\begin{center}
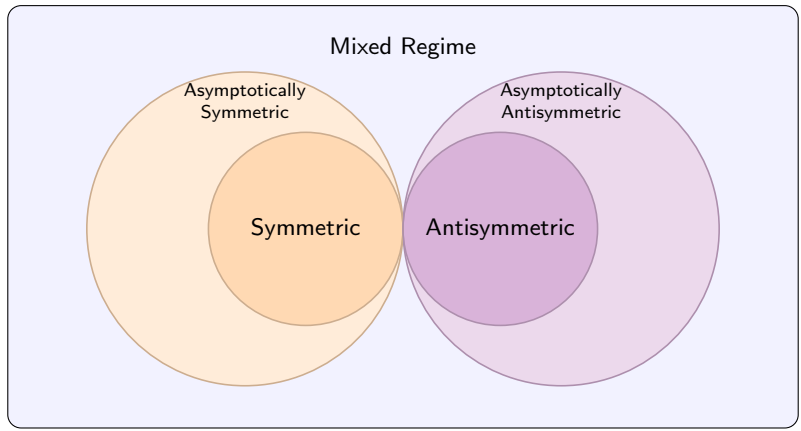
 
\begin{tikzpicture}[
    scale = 0.8,
  lab/.style={font=\small\sffamily},
  alabl/.style={font=\scriptsize\sffamily, align=center},
  outline/.style={line width=0.6pt}
]
  \colorlet{bg_color}{blue!5!white}
  \colorlet{symm_outer}{orange!15!white}
  \colorlet{symm_inner}{orange!30!white}
  \colorlet{asymm_outer}{violet!15!white}
  \colorlet{asymm_inner}{violet!30!white}
  
  \def\R{2.6} \def\r{1.6} \def\y{-0.2}
  
  \coordinate (CL) at (-\R, \y); \coordinate (CR) at (\R, \y);
  \coordinate (IL) at (-\r, \y); \coordinate (IR) at (\r, \y);

  \path[draw, rounded corners=5pt, fill=bg_color] (-6.5, -3.5) rectangle (6.5, 3.5);

  \path[fill=symm_outer] (CL) circle (\R); \path[fill=asymm_outer] (CR) circle (\R);
  \path[fill=symm_inner] (IL) circle (\r); \path[fill=asymm_inner] (IR) circle (\r);

  \draw[outline, color=symm_inner!80!black] (CL) circle (\R);
  \draw[outline, color=asymm_inner!80!black] (CR) circle (\R);
  \draw[outline, color=symm_inner!80!black] (IL) circle (\r);
  \draw[outline, color=asymm_inner!80!black] (IR) circle (\r);

  \node[lab] at (0, 2.8) {Mixed Regime};
  \node[lab, text=black] at (IL) {Symmetric};
  \node[lab, text=black] at (IR) {Antisymmetric};
  \node[alabl, text=black] at (-\R, \y + 2.1) {Asymptotically\\Symmetric};
  \node[alabl, text=black] at (\R, \y + 2.1) {Asymptotically\\Antisymmetric};
\end{tikzpicture}

\captionof{figure}{Schematic representation of the asymptotic regimes for adaptive triadic hypergraphs (Definition \ref{def:asymptotic_regime}). The outer rectangle denotes the mixed regime, in which symmetric and antisymmetric parts coexist with mixed components. The two large circles correspond to the asymptotically symmetric (left) and asymptotically antisymmetric (right) regimes, each containing an inner core of exactly symmetric or antisymmetric hypergraphs. All four circles meet at the zero hypergraph in the centre.}
\label{fig:regimes_schematic}
\end{center}
\end{definition}

\vspace{1em}

\begin{example}\label{example: symmetric case study}
    Suppose we have the adaptive, triadic Kuramoto model from Definition  \ref{def:ATKHH} given by 

    \begin{equation*}
    \dot{\theta}_i = \omega_i + \frac{1}{N} \sum_{j=1}^{N} A^{(1)}_{ij} f_1(\theta_i, \theta_j)
    + \frac{1}{N^2} \sum_{k=1}^N \sum_{j=1}^{N} A^{(2)}_{ijk} \,  f_2(\theta_i, \theta_j, \theta_k),
\end{equation*}
with the following dynamics on the adjacency matrix and tensor,
\begin{equation*}
    \dot{A}^{(1)}_{ij} = -\delta_1 \left(A^{(1)}_{ij} + \cos(\theta_i - \theta_j ) \right).
\end{equation*}
\begin{equation*}
    \dot{A}^{(2)}_{ijk} = -\delta_2 \left(A^{(2)}_{ijk} + \cos(\theta_i + \theta_j + \theta_k ) \right),
\end{equation*}

where $\delta_1, \delta_2$ are some positive constants. We show this system converges to the asymptotic undirected regime. First let $A^{(1)} = \left( A^{(1)}_{ij} \right)_{1 \leq i,j \leq N }$ and let $M = \left( \cos(\theta_i - \theta_j) \right)_{1 \leq i ,j \leq N } \in \mathbb{R}^{N \times N} .$ Then we can rewrite the binary interaction ODEs into one matrix form, given by   $ \dot A^{(1)} = -\delta_1 (A^{(1)} + M).$ Since cosine is an even function, $M$ is a symmetric matrix, hence $P_{sym}(M) = M.$ Applying the symmetric and antisymmetric projection operators to the binary matrix evolution equation therefore yields

 $$\dot A^{(1)}_{sym} = - \delta_1( A^{(1)}_{sym} + M ),$$
 $$\dot A^{(1)}_{anti} = -\delta_1 A^{(1)}_{anti}.$$

 We  see that $  A^{(1)}_{anti} =  A^{(1)}_{anti}(0) e^{-\delta_1 t}$ and so $\lim_{t \rightarrow \infty}  A^{(1)}_{anti} = 0.$ In fact, the symmetric component can be solved, yielding
\begin{equation}\label{equation: symmetric binary}
A^{(1)}_{sym}(t)
   \;=\;
   e^{-\delta_1 t}A^{(1)}_{sym}(0)
   \;-\;
   \delta_1
   \int_{0}^{t}
      e^{-\delta_1 (t-s)}\,M(s)\,ds,
\end{equation}

For the triadic interactions, introduce the tensors  
$
A^{(2)}=\bigl(A^{(2)}_{ijk}\bigr)_{1\le i,j,k\le N},
T_{ijk}:=\cos\bigl(\theta_i+\theta_j+\theta_k\bigr).
$
The evolution law  for the triadic terms can consequently be written as  
$
\dot A^{(2)}=-\delta_2\bigl(A^{(2)}+T\bigr).
$ Because the argument $\theta_i+\theta_j+\theta_k$ is invariant under every permutation of $(i,j,k)$, i.e.   
$
T_{ijk}=T_{\sigma(i)\sigma(j)\sigma(k)}$  for all $\sigma\in S_3,
$
we have that  $T\in\text{Sym}^3(\mathbb{R}^N)$.  Consequently  
$
P_{sym}(T)=T,  P_{alt}(T)=0, P_{mix}(T)=0.
$

Applying the Young-symmetriser projections  $(P_{sym},P_{alt},P_{mix})$  to the tensor ODE therefore yields  
$$
\begin{aligned}
\dot A^{(2)}_{sym} &= -\delta_2\bigl(A^{(2)}_{sym}+T\bigr),\\
\dot A^{(2)}_{alt} &= -\delta_2\,A^{(2)}_{alt},\\
\dot A^{(2)}_{mix} &= -\delta_2\,A^{(2)}_{mix}.
\end{aligned}
$$

The antisymmetric and mixed parts have solutions which decay to zero given by 
$ A^{(2)}_{alt}(t)=A^{(2)}_{alt}(0)\,e^{-\delta_2 t}, 
A^{(2)}_{mix}(t)=A^{(2)}_{mix}(0)\,e^{-\delta_2 t}.
$
We can solve for the symmetric part, yielding
\begin{equation*}\label{equation: symmetric triadic equation}
A^{(2)}_{sym}(t)
   = e^{-\delta_2 t}A^{(2)}_{sym}(0)
     -\delta_2\int_{0}^{t} e^{-\delta_2(t-s)}\,T(s)\,ds,
\end{equation*}
and just like in the binary case, this does not necessarily decay to zero. Just as on the level of binary interactions, the symmetric term Frobenius strength has the following estimate, 

$$0 \leq \| A^{(2)}_{sym}(t)\|
   \leq  e^{-\delta_2 t}\|A^{(2)}_{sym}(0)\| + 
     \left( 1 - e^{-\delta_2 t} \right)\sup_{s \geq 0}\| T(s)\|.$$

From this we see that our hypergraph converges to the undirected (symmetric) regime, in the sense of Definition \ref{def:asymptotic_regime}. Furthermore, suppose we choose specific functions for $f_1$ and $f_2$ given by 

$$f_1(\theta_i, \theta_j) = \sin(\theta_i - \theta_j),$$
$$f_2(\theta_i, \theta_j, \theta_k) = \sin(2\theta_i - \theta_j -\theta_k).$$

We numerically simulate the Frobenius norms and observe that everything but the symmetric parts converge to zero, confirming our analytic findings, see figures \ref{fig:binary_energy} and \ref{fig:triadic_energy}. The simulations were run over $t \in [0,50]$ with $500$ evaluation points. We used $N=5$ oscillators with natural frequencies $\omega_i \sim \mathcal{N}(0,1)$, initial phases drawn uniformly from $[0,2\pi)$, and random initial weights. Both $\delta_1$, and $ \delta_2$ were set to $ 0.1$. The full implementation is available in the code repository.\\

\begin{figure}[t]
  \centering

  \begin{subfigure}[b]{0.47\textwidth}
    \begin{overpic}[width=\linewidth]{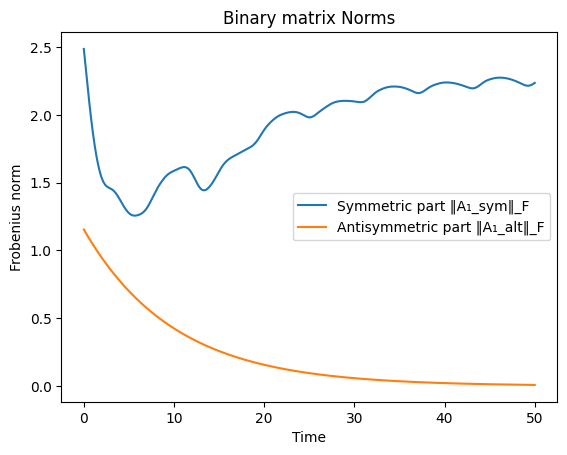}
    \end{overpic}
    \caption{Binary layer. Time series of the Frobenius norms
      $\norm{A^{(1)}_{\mathrm{sym}}}_{F}$ and $\norm{A^{(1)}_{\mathrm{alt}}}_{F}$ obtained by projecting
      $A^{(1)}$ onto the symmetric and antisymmetric subspaces of $V_{2}=M_{N}(\mathbb{R})$.
      The antisymmetric energy decays like $e^{-\delta_{1} t}$ with $\delta_{1}=0.1$, while the symmetric
      part saturates at the norm of the driver $M=\cos(\theta_{i}-\theta_{j})$, consistent with
      \eqref{equation: symmetric binary}.}
    \label{fig:binary_energy}
  \end{subfigure}
  \hfill
  \begin{subfigure}[b]{0.47\textwidth}
    \begin{overpic}[width=\linewidth]{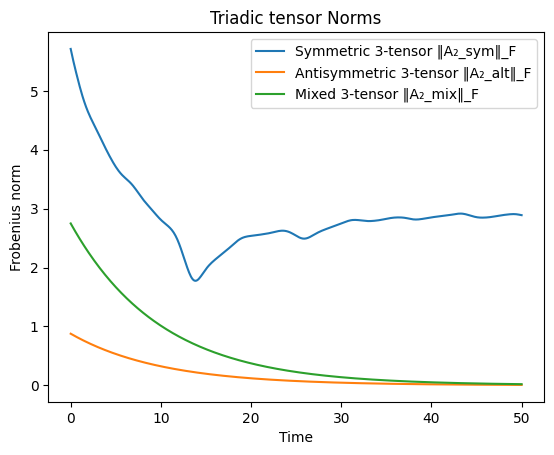}
    \end{overpic}
    \caption{Triadic layer. Frobenius norms of the symmetric, antisymmetric, and mixed components of
      $A^{(2)}$. Both $\norm{A^{(2)}_{\mathrm{alt}}}_{F}$ and $\norm{A^{(2)}_{\mathrm{mix}}}_{F}$ decay like
      $e^{-\delta_{2} t}$ with $\delta_{2}=0.1$, whereas the symmetric energy converges to the norm of the driver
      $T=\cos(\theta_{i}+\theta_{j}+\theta_{k})$, in agreement with
      \eqref{equation: symmetric triadic equation}.}
    \label{fig:triadic_energy}
  \end{subfigure}

  \caption{Symmetry–component norms in the binary and triadic layers.} 
  \label{fig:energy_plots}
\end{figure}

\end{example}

\begin{example}[Antisymmetric case study]\label{example:antisymmetric}
We construct a system that flows into the oriented
(antisymmetric) regime, in the sense of Definition \ref{def:asymptotic_regime}. First we recall the Levi-Civita symbol.\\

\begin{definition}[Generalised Levi–Civita symbol]\label{def:levi-civita}
For any three indices $i,j,k\in\{1,\dots ,N\}$ (with $N\ge 3$) define
$$
\varepsilon_{ijk} \;=\;
\begin{cases}
\; 0 &
\text{if } i=j \text{ or } j=k \text{ or } k=i,\\[6pt]
\; +1 &
\text{if } (i,j,k) \text{ is an \emph{even} permutation of its
           ascending reordering }(i',j',k'),\\[6pt]
\; -1 &
\text{if } (i,j,k) \text{ is an \emph{odd} permutation of }(i',j',k').
\end{cases}
$$
Here $(i',j',k')$ is the unique ordering of $\{i,j,k\}$ with
$i'<j'<k'$, and the parity is taken with respect to
that ascending triple.  Thus $\varepsilon_{ijk}$ is totally
antisymmetric, i.e. 
$\varepsilon_{\sigma(i)\sigma(j)\sigma(k)}
 =\operatorname{sgn}(\sigma)\,\varepsilon_{ijk}$ for every
$\sigma\in S_3$.
\end{definition}

\color{black}
We allow our model to have the following dynamics on the nodes, edges, and hyperedges.
 \begin{equation*}
    \dot{\theta}_i = \omega_i + \frac{1}{N} \sum_{j=1}^{N} A^{(1)}_{ij} f_1(\theta_i, \theta_j)
    + \frac{1}{N^2} \sum_{k=1}^N \sum_{j=1}^{N} A^{(2)}_{ijk} \,  f_2(\theta_i, \theta_j, \theta_k),
\end{equation*}
with the following dynamics on the heterogeneous hypergraph, 
\begin{equation*}
    \dot{A}^{(1)}_{ij} = -\delta_1 \left(A^{(1)}_{ij} + \sin(\theta_i - \theta_j ) \right), \qquad 
    \dot{A}^{(2)}_{ijk} = -\delta_2 \left(A^{(2)}_{ijk} + \varepsilon_{ijk}\sin(\theta_i + \theta_j + \theta_k ) \right).
\end{equation*}

On the level of binary interactions, first define the matrix $Q \in \mathbb{R}^{N \times N}$ as 
$
\left(Q_{ij}\right) \;=\; \left(\sin(\theta_i-\theta_j) \right)_{1 \leq i,j \leq N}\;=\;\left(-Q_{ji}\right).
$
Because $P_{\text{alt}}(N)=N$ and $P_{\text{sym}}(N)=0$, projection
yields the decoupled equations
$$
\dot A^{(1)}_{\text{alt}} = -\delta_1\bigl(A^{(1)}_{\text{alt}}-N\bigr),
\qquad
\dot A^{(1)}_{\text{sym}} = -\delta_1\,A^{(1)}_{\text{sym}}.
$$

The symmetric bit decays to zero, and similarly to the previous example we have that 
\begin{equation}\label{equation: antisymmetric binary}
A^{(1)}_{alt}(t)
   \;=\;
   e^{-\delta_1 t}A^{(1)}_{alt}(0)
   \;-\;
   \delta_1
   \int_{0}^{t}
      e^{-\delta_1 (t-s)}\,Q(s)\,ds.
\end{equation}

For the triadic interactions, introduce the totally antisymmetric driver
$$
H_{ijk} \;=\;
\varepsilon_{ijk}\,\sin\bigl(\theta_i+\theta_j+\theta_k\bigr),
$$
and let $H \in \left( \mathbb{R}^N \right)^{\otimes 3}$ be defined as $H = \left( H_{ijk} \right)_{1 \leq i,j,k \leq N}.$ Because $P_{\text{alt}}(H)=H$ while
$P_{\text{sym}}(H)=P_{\text{mix}}(H)=0$, projecting onto the triadic system of ODEs gives
$$
\dot A^{(2)}_{\text{alt}}
   = -\delta_2\bigl(A^{(2)}_{\text{alt}}-H\bigr),
\qquad
\dot A^{(2)}_{\text{sym}}
   = -\delta_2\,A^{(2)}_{\text{sym}},
\qquad
\dot A^{(2)}_{\text{mix}}
   = -\delta_2\,A^{(2)}_{\text{mix}}.
$$
It follows the symmetric and mixed terms decay to zero, and the mixed term does not necessarily.

Since both the binary and triadic layers lose every component outside the fully antisymmetric subspaces in the limit,  the directed hypergraph converges to the
antisymmetric regime. Similar to the symmetric case, we simulate these dynamics over $t \in [0,50]$ with 500 evaluation points. We used $N=5$ oscillators with frequencies 
$\omega_{i}$ equally spaced in $[-1,1]$, random initial weights, initial phases sampled 
uniformly from $[0,1]$, and set $\delta_{1}=\delta_{2}=0.1$, see \ref{fig:bin_antisym} and \ref{fig:tri_antisym}. The full implementation is available in the code repository.

\begin{figure}[htbp]
  \centering

  \begin{subfigure}[b]{0.48\textwidth}
    \begin{overpic}[width=\linewidth]{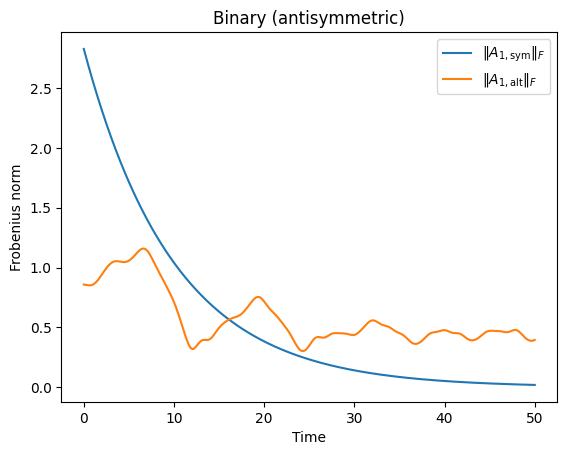}
    \end{overpic}
    \caption{Binary layer. Frobenius norms of the symmetric and
      antisymmetric components of the evolving edge matrix
      $A^{(1)}(t)$ under the antisymmetric driver
      $N_{ij}=\sin(\theta_{i}-\theta_{j})$. As shown analytically,
      $\norm{A^{(1)}_{\mathrm{alt}}}_{F}$ does not decay to zero,
      unlike $\norm{A^{(1)}_{\mathrm{sym}}}_{F}$, which decays like
      $e^{-\delta_{1} t}$ for $\delta_{1}=0.1$.}
    \label{fig:bin_antisym}
  \end{subfigure}
  \hfill
  \begin{subfigure}[b]{0.48\textwidth}
    \begin{overpic}[width=\linewidth]{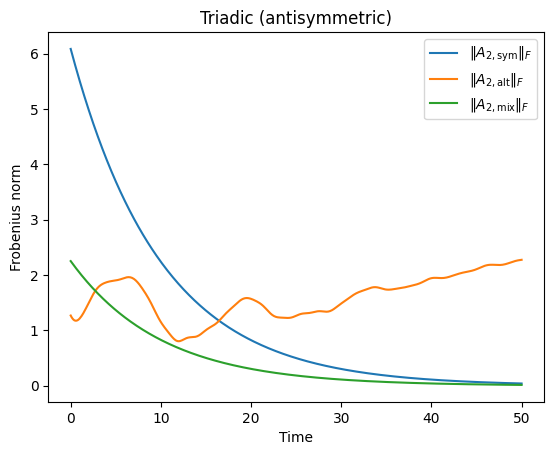}
    \end{overpic}
    \caption{Triadic layer. Frobenius norms of the fully symmetric,
      fully antisymmetric, and mixed parts of $A^{(2)}(t)$ driven by
      $H_{ijk}=\varepsilon_{ijk}\sin(\theta_{i}+\theta_{j}+\theta_{k})$.
      Both $\norm{A^{(2)}_{\mathrm{sym}}}_{F}$ and
      $\norm{A^{(2)}_{\mathrm{mix}}}_{F}$ vanish exponentially
      (like $e^{-\delta_{2} t}$ with $\delta_{2}=0.1$).}
    \label{fig:tri_antisym}
  \end{subfigure}

    \caption{Convergence to the antisymmetric regime. (a) Binary layer. (b) Triadic layer.}
    \label{fig:antisym_combined} 
\end{figure}

\end{example}

\begin{remark}
The two examples above show that the weight tensors encoding our hypergraph asymptotically collapse onto distinct $S_2$ and $S_3$ isotypic components. These regimes  have consequences in terms of which types of simplicial complexes we can and cannot study the emergence of, on our heterogeneous hypergraphs. Such a classification is summarised as follows.

\begin{itemize}
  \item \textbf{Symmetric regime:} When the hypergraph  enters the symmetric regime, the antisymmetric and mixed terms become negligible. Hence after a sufficient interval of time, the matrix and tensor encoding our hypergraph is fully symmetric (up to some small $\varepsilon.$)
        Thus, each interaction depends solely on the unordered vertex set.  In this case, we can study the emergence of standard, unoriented simplicial complexes. We do this using an $\delta$ boundary test in the next section. Once a simplicial complex has been established and is stable, the classical tools such as simplicial and persistent homology are thereafter valid.
        \item \textbf{Oriented regime:} When the hypergraph settles into the antisymmetric regime, the symmetric and mixed contributions fall below the tolerance~$\varepsilon$.  All surviving interactions are alternating, so they are encoded by ordered vertex tuples, up to an overall sign.  From here, we employ a threshold test, now with orientation, to certify that every active oriented $p$-simplex carries all of its $p-1$ faces. Once this is done, we can employ (oriented) simplicial homology. 

\item \textbf{Mixed regime:} If mixed-symmetry terms persist, or if symmetric and antisymmetric parts coexist above $\varepsilon$, no ordering collapses the data into a downward-closed family.  Hence, neither unoriented nor oriented simplicial complexes emerge. The minimal language that still captures the higher-order structure is that of simplicial sets or $\Delta$-sets, whose construction and invariants we postpone to a later section.

\end{itemize}
\end{remark}

\section{Simplicial Complex Emergence in the Symmetric Regimes.}
In this section, we formulate results that guarantee invariance and formation of simplicial complexes for hypergraphs that converge to the symmetric and antisymmetric regimes.

\subsection{Retention and Emergence of Simplicial Structure}
Suppose we have an adaptive higher order model such as in Definition \ref{def:ATKHH}  with a pairwise weighted adjacency matrix $A^{(1)}(t)$ and weighted rank-$3$ tensor $A^{(2)}(t)$ encoding the higher order interactions. Suppose that it converges to the symmetric regime, in the sense of Definition \ref{def:asymptotic_regime}. Thus, for all $\varepsilon > 0$ there exists a time $t_0$ such that for all $t \geq t_0$ we have that all the antisymmetric and mixed parts decay to zero, and both $|A^{(1)}_{ij}(t) - A^{(1)}_{ji}(t)| < \varepsilon$ and $|A^{(2)}_{ijk}(t) - A^{(2)}_{\sigma(i)\sigma(j)\sigma(k)}(t)| < \varepsilon$ for all $\sigma \in S_3$. In such a case, we create equivalence classes and view $A^{(1)}_{ij}$ and $A^{(1)}_{ji}$ as the same object. Similarly, $A^{(2)}_{ijk}$ will be the representative for all other permutations of its indices, which is a well-defined notion since they also differ only by a small $\varepsilon$, which we have total control over.  

We are interested in the formation of a simplicial complex structure on our hypergraph.  We first note that the nodal variables $x_i(t)$ describe the internal state (e.g. phase or activity) of each agent rather than its structural presence. Consequently, the set of $0$-simplicies is taken to be fixed, with adaptivity acting only on higher-order simplicies.  Hence, when trying to see whether or not we have a simplicial complex, we need not concern ourselves with the behaviour of the $0$-simplices, i.e. vertices. Therefore, to ensure a simplicial complex structure, we require that all subfaces of a degree $2$ face are non-zero; that is, we require that if $A^{(2)}_{ijk} \neq 0$, for some $i,j,k$ then $A^{(1)}_{\ell p} \neq 0$ for all $\{\ell, p\} \subseteq \{i , j , k\}.$ Thus, we want to examine the set that violates the simplicial complex structure, which is given by 
\begin{equation}\label{equation: first manifold}
    M_{ijk} := \{ (A^{(2)}_{ijk}, A^{(1)}_{ij}, A^{(1)}_{ik}, A^{(1)}_{jk} ) \in \mathbb{R}^4 \mid A^{(1)}_{ij} A^{(1)}_{ik} A^{(1)}_{jk} = 0 \text{ and } A^{(2)}_{ijk} \neq 0 \}.
\end{equation}

In practice, the all-or-nothing condition
$$
   A^{(2)}_{ijk} \neq 0
   \quad\text{and}\quad
   A^{(1)}_{ij} \cdot \,A^{(1)}_{ik} \cdot \,A^{(1)}_{jk} = 0
$$
is too rigid for deciding whether a genuine $2$-simplex is present on the
nodes $i,j,k.$  
Consider the extreme case in which $A^{(2)}_{ijk} \gg 1$ but, for instance,
$A^{(1)}_{ij} = 10^{-5}$.  Under any realistic notion of presence, that
triangle should still be regarded as absent. To encode this
intuition we introduce a tolerance parameter $\delta > 0$: interactions
below $\delta$ are treated as effectively zero, while interactions
exceeding~$\delta$ are treated as present. We therefore make the following definition. \\

\begin{definition}
We say the triadic connection strength $A^{(2)}_{ijk}$ respects the downward closure property with parameter $\delta > 0,$ if and only if 
\begin{equation}\label{def: 2 simplex with parameter delta}
    |A^{(2)}_{ijk}| \geq \delta \implies |A^{(1)}_{ij}|  \geq \delta, |A^{(1)}_{ik}| \geq \delta, |A^{(1)}_{jk}| \geq \delta.
\end{equation}
    
    We say a $2-$simplex is present on the nodes $i,j$ and $k$ with parameter $\delta$, if and only if 
    $|A^{(2)}_{ijk}| \geq \delta $
    and $A^{(2)}_{ijk}$ respects the downward closure property.
\end{definition}

We say a $1-$simplex is present on the nodes $i$ and $j$, with parameter $\delta$, if $|A^{(1)}_{ij}| \geq \delta. $ Since the vertices are assumed vacuously present (and of strength greater than $\delta),$ we assume the $1-$simplices respect the downward closure property automatically.\\

\begin{definition}
Given a directed hypergraph $\mathcal{H}(t) = (\mathbf{x}(t), A^{(1)}(t), A^{(2)}(t))$ that converges to the symmetric regime as in Definition \ref{def:asymptotic_regime}, we say $\mathcal{H}(t)$ is a simplicial complex with parameter $\delta > 0$, if for all $i,j,k \in \{ 1 , \dots , N\}$, the downward closure property \ref{def: 2 simplex with parameter delta} is not violated. 
\end{definition}

This softened rule provides a more faithful measure of when simplicial dynamics should, or
should not, be deemed observable. We now want to study when a simplicial complex is present on our hypergraph, and under what conditions is it stable. As such, we study the following modified subset of $\mathbb{R}^4.$\\

\begin{definition}
    Let $i,j,k \in \{ 1, \dots , N\}.$ Then the set of points where we do not have a simplicial complex with parameter $\delta$ on these nodes, denoted by  
    $M_{ijk}^{2,\delta} \subseteq \mathbb{R}^4$, is given by 
    \begin{equation}\label{equation: definition of M_epsilon}
         M_{ijk}^{2,\delta} = \left\{ \bigl(A^{(2)}_{ijk}, A^{(1)}_{ij}, A^{(1)}_{ik}, A^{(1)}_{jk},\bigr) \in \mathbb{R}^4 \mid |A^{(2)}_{ijk}| \geq \delta \text{ and } \left(|A^{(1)}_{ij}| < \delta \text{ or } |A^{(1)}_{ik}| < \delta \text{ or } |A^{(1)}_{jk}| < \delta\right)\right\}.
    \end{equation}
\end{definition}

\begin{figure}[h]
\centering
\begin{tikzpicture}[
  scale=0.9,
  vtx/.style={circle, fill=black, inner sep=1.7pt},
  face/.style={fill=gray!20}, 
  solidE/.style={draw=blue!70, line width=1.2pt},
  weakE/.style={draw=blue!70, line width=1.2pt, dashed},
  lab/.style={font=\small\color{blue!70}}
]
  \coordinate (i) at (0,0);
  \coordinate (j) at (6,0);
  \coordinate (k) at (3,4.2);

  \path[face] (i)--(j)--(k)--cycle;

  \draw[solidE] (i) -- (j)
    node[lab, midway, below] {$|A^{(1)}_{ij}|\geq \delta$};
  \draw[solidE] (i) -- (k)
    node[lab, midway, left] {$|A^{(1)}_{ik}|\geq \delta$};
  \draw[weakE] (j) -- (k)
    node[lab, midway, right] {$|A^{(1)}_{jk}|<\delta$};

  \node[vtx] at (i) {}; \node[below left=2pt of i] {$i$};
  \node[vtx] at (j) {}; \node[below right=2pt of j] {$j$};
  \node[vtx] at (k) {}; \node[above=2pt of k] {$k$};

  \node[lab] at (barycentric cs:i=1,j=1,k=1) {$|A^{(2)}_{ijk}|\geq \delta$};
\end{tikzpicture}

\caption{Violation of downward closure: the triad $(i,j,k)$ is above threshold ($|A^{(2)}_{ijk}|\geq \delta$), edges $(i,j)$ and $(i,k)$ are above threshold, but $(j,k)$ falls below threshold ($|A^{(1)}_{jk}|<\delta$), so this is not a simplicial complex.}
\end{figure}
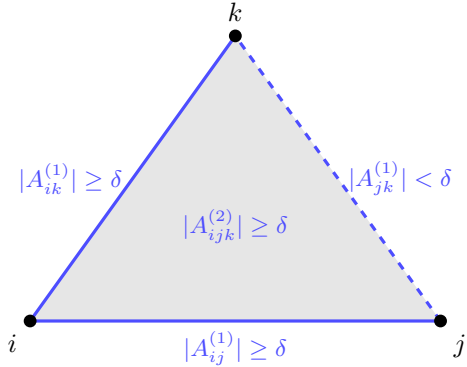

 In order for a simplicial complex to emerge, we seek to find conditions on our dynamics so that the above set, equation \ref{equation: definition of M_epsilon}, is negatively time-invariant.\\

 \begin{definition}[Invariance]
Let $\dot{x} = g(x)$ be a dynamical system on $\mathbb{R}^n$ with flow $\phi_t$. 
A set $S \subset \mathbb{R}^n$ is called \emph{invariant} for $\phi_t$ if 
$$
x_0 \in S \;\;\Longrightarrow\;\; \phi_t(x_0) \in S \quad \text{for all } t \in \mathbb{R}.
$$
It is called \emph{positively invariant} if this implication holds for all $t \geq 0$, and 
\emph{negatively invariant} if it holds for all $t \leq 0$. 
\end{definition}

 To this end, it would at first appear tempting to try and find a smooth function $f \in C^\infty(\mathbb{R}^4)$ whose vanishing set, given by $f^{-1}(0) := \{x \in \mathbb{R}^4 \mid f(x) = 0\}$, equals the boundary of $M_{ijk}^{2,\delta}$, i.e. $\partial M_{ijk}^{2,\delta}$. One could then calculate the outward-pointing normal vector from $M_{ijk}^{2,\delta}$ by simply calculating the gradient of $f$, i.e. $\nabla f$. If the flow on the space is given by $\dot{x} = g(x)$, one could then take the Lie derivative with the flow, given by 
$$
    \nabla f \cdot g,
$$ 
to determine if particles are exiting or entering the region, depending on the sign of the Lie derivative. This approach is global in nature as one such theoretical function $f$ could provide all the information required to analyse the boundary. However, the boundary $\partial M_{ijk}^{2,\delta}$ is defined piecewise by different inequalities (for example, parts where $|A_{ij}^{(1)}| < \delta$ and parts where $|A_{ijk}^{(2)}| \geq \delta$). As a result, these pieces do not glue together to form a single smooth manifold. Consequently, the regular value theorem\footnote{Recall the \textbf{Regular Value Theorem}: If $f \colon \mathbb{R}^n \to \mathbb{R}^m$ is smooth and $c \in \mathbb{R}$ is a regular value (meaning that for every $x \in f^{-1}(c)$ the derivative $Df(x)$ has full rank), then $f^{-1}(c)$ is a smooth submanifold of $\mathbb{R}^n$ with dimension $n - m$, provided it is non-empty.} implies that the gradient of such a hypothetical function $f$ would vanish everywhere on $\partial M_{ijk}^{2,\delta}$, i.e. $\nabla f|_{\partial M_{ijk}^{2,\delta}} = 0$. This approach therefore provides no information on the Lie derivative, and so such a global approach fails. 

As such, we attempt a local approach, and first calculate the boundary $\partial M_{ijk}^{2,\delta}$ explicitly, before deriving the necessary conditions for outward flow from the boundary. 

We first recall that 
$$
    \partial M_{ijk}^{2,\delta} = \overline{M_{ijk}^{2,\delta}} \setminus \text{int}\bigl(M_{ijk}^{2,\delta}\bigr).
$$

Next, we note that 
$$
    \overline{M_{ijk}^{2,\delta}} = \{|A^{(2)}_{ijk}| \geq \delta,\, |A^{(1)}_{ij}| \leq \delta\} 
    \cup \{|A^{(2)}_{ijk}| \geq \delta,\, |A^{(1)}_{ik}| \leq \delta\} 
    \cup \{|A^{(2)}_{ijk}| \geq \delta,\, |A^{(1)}_{jk}| \leq \delta\},
$$
and that 
$$
    \text{int}\bigl(M_{ijk}^{2,\delta}\bigr) = \left\{|A^{(2)}_{ijk}| > \delta \text{ and } \left(|A^{(1)}_{ij}| < \delta \text{ or } |A^{(1)}_{ik}| < \delta \text{ or } |A^{(1)}_{jk}| < \delta\right)\right\}.
$$

One can then see that 
\begin{align*}
\{|A^{(2)}_{ijk}| \geq \delta,\, |A^{(1)}_{ij}| \leq \delta\} \setminus \text{int}\bigl(M_{ijk}^{2,\delta}\bigr)
    &= \{|A^{(2)}_{ijk}| = \delta,\, |A^{(1)}_{ij}| \leq \delta\} \\
    &\quad\cup \{|A^{(2)}_{ijk}| \geq \delta,\, |A^{(1)}_{ij}| = \delta,\, |A^{(1)}_{ik}| \geq \delta,\, |A^{(1)}_{jk}| \geq \delta\}.
\end{align*}
By symmetry, similar expressions hold for the other two sets making up the closure. Therefore, the full boundary is:

\begin{align}\label{equation: boundary of bad set}
\partial M_{ijk}^{2,\delta} 
    &= 
    \underbrace{\bigl\{|A^{(2)}_{ijk}| = \delta,\, \min\bigl(|A^{(1)}_{ij}|,|A^{(1)}_{ik}|,|A^{(1)}_{jk}|\bigr)\leq \delta \bigr\}}_{ =: X_1} \\
    &\quad\cup 
    \underbrace{\{|A^{(2)}_{ijk}| \geq \delta,\, |A^{(1)}_{ij}| = \delta,\, |A^{(1)}_{ik}| \geq \delta,\, |A^{(1)}_{jk}| \geq \delta\}}_{=:X_2} \\
    &\quad\cup 
    \underbrace{\{|A^{(2)}_{ijk}| \geq \delta,\, |A^{(1)}_{ik}| = \delta,\, |A^{(1)}_{ij}| \geq \delta,\, |A^{(1)}_{jk}| \geq \delta\}}_{=:X_3} \\
    &\quad\cup 
    \underbrace{\{|A^{(2)}_{ijk}| \geq \delta,\, |A^{(1)}_{jk}| = \delta,\, |A^{(1)}_{ij}| \geq \delta,\, |A^{(1)}_{ik}| \geq \delta\}}_{=:X_4}.
\end{align}

Thus, we can write $\partial M_{ijk}^{2,\delta} = \cup_{i = 1,2,3,4} X_i$ where $X_i$ are mutually disjoint almost everywhere, with the $X_i$ written as in \ref{equation: boundary of bad set}. From this, we get that the outward-pointing normal vector to our boundary is  

$$n : \partial M_{ijk}^{2,\delta} \rightarrow \mathbb{R}^4, $$
$$n\bigl(A^{(2)}_{ijk},A^{(1)}_{ij},A^{(1)}_{ik},A^{(1)}_{jk}\bigr)  = \begin{cases} 
    (-\text{sgn}(A^{(2)}_{ijk}), 0, 0, 0) & \text{ on } X_1 \\ 
    \bigl(0, \text{sgn}(A^{(1)}_{ij}), 0, 0\bigr) & \text{ on } X_2\\
    \bigl(0, 0, \text{sgn}(A^{(1)}_{ik}), 0\bigr) & \text{ on } X_3\\
     \bigl(0, 0, 0, \text{sgn}(A^{(1)}_{jk})\bigr) & \text{ on } X_4
\end{cases}$$

\begin{remark}
If we begin with a simplicial complex and ensure that the above outward normal vector has a non-negative scalar product with the vector field, then we will ensure our hypergraph cannot lose its simplicial complex structure. 
One technicality lies on the points lying on at least two of the $X_i$. The collection of all such points lies on intersections of affine hyperplanes and hence forms a set of measure zero. Moreover, they do not admit a unique outward normal vector, and for this reason, we call them \emph{singular}.  We can still verify outward flow from the boundary at these singular points as follows. 
To be more precise, suppose $x^*$ is one such singular and point and let $I(x^*)=\{\,i\in\{1,2,3,4\} \mid  x^*\in X_i\}.$ The outward normal cone  at $x^*$ is then the non-negative hull 
$$N_{\partial M}(x^*) = 
\left\{\sum_{i\in I(x^{\ast})}\lambda_i\,n_i(x^{\ast}) \ \Big|\ \lambda_i \in \mathbb{R}_{\geq 0} \right\}.$$

This allows us to formulate a precise definition of outward pointing with respect to the set $M^{2,\delta}_{ijk}.$\\

\end{remark}

\begin{definition}[Outward–pointing]\label{def: outward pointing symmetric}
Suppose $\mathcal{H}(t)$ is an adaptive, triadic network dynamical system encoded by a matrix $A^{(1)}$ and rank-$3$ tensor $A^{(2)}$, and suppose it converges to the symmetric regime as in Definition \ref{def:asymptotic_regime}. Let $F^{(ijk)}
:= \bigl(\dot A^{(2)}_{ijk},\, \dot A^{(1)}_{ij},\, \dot A^{(1)}_{ik},\, \dot A^{(1)}_{jk}\bigr)$ We then say the system is \emph{outward–pointing at $(i,j,k)$ with threshold $\delta$} if
$$
n\cdot F^{(ijk)}(x)\ \geq\ 0\qquad \text{for every }x\in \partial M^{2,\delta}_{ijk}\text{ and every }n\in N_{\partial M}(x).
$$
Equivalently, it suffices to verify the sign–derivative inequalities
$$
\begin{aligned}
&\text{on }X_1:\quad \text{sgn}(A^{(2)}_{ijk})\,\dot A^{(2)}_{ijk}\leq 0,\\
&\text{on }X_2:\quad \text{sgn}(A^{(1)}_{ij})\,\dot A^{(1)}_{ij}\geq 0,\qquad
\text{on }X_3:\ \text{sgn}(A^{(1)}_{ik})\,\dot A^{(1)}_{ik}\geq 0,\qquad
\text{on }X_4:\ \text{sgn}(A^{(1)}_{jk})\,\dot A^{(1)}_{jk}\geq 0.
\end{aligned}
$$
\end{definition}

Importantly, our Definition  \ref{def: outward pointing symmetric} of outward pointing ensures that the singular points on $\partial M^{2,\delta}_{ijk}$ have outward pointing normal cones.\\

\begin{remark}
In order for the notion of outward--pointing to be well defined, we require that the 
binary and triadic dynamics admit continuous trajectories. 
Concretely, we assume throughout that the right--hand sides 
$$
\dot A^{(1)}_{ij} = g^{(1)}_{ij}(t,A^{(1)},A^{(2)},x), \qquad
\dot A^{(2)}_{ijk} = g^{(2)}_{ijk}(t,A^{(1)},A^{(2)},x)
$$
are at least $C^1$. However, as we will see later, many examples of such dynamics will involve terms with an absolute value appearing; hence, we only require $C^1$ away from the vanishing hyperplanes. More precisely, let 

$$\mathcal{H}_0 := 
    \bigcup_{1\leq i<j\leq N}\{A^{(1)}_{ij}=0\}
    \;\cup\;
    \bigcup_{1\leq i<j<k\leq N}\{A^{(2)}_{ijk}=0\};$$
Then we require the dynamics are $C^1$ on $\mathbb{R}^d \setminus \mathcal{H}_0$ where $d = N + N^2 + N^3.$\\

\end{remark}

\begin{definition}[Simplicial region at threshold $\delta$]
Let $K_{ijk}:=\mathbb{R}^4\setminus M^{2,\delta}_{ijk}$. Define the global simplicial region for our system as 
$$
\Omega_\delta:=\prod_{i<j<k} K_{ijk}\ \subseteq  \mathbb{R}^{4\binom{N}{3}}.
$$
\end{definition}

 The ambient space is $\mathbb{R}^{4  \binom{N}{3}}$ since each $K_{ijk}$ contains a $4-$tuple, and there are $\binom{N}{3}$ such sets $K_{ijk}$; this follows because in the symmetric regime each $A^{(2)}_{ijk}$ is invariant under any $S_3$ permutation of the indices.  We can now state our main result of this section. \\

\begin{theorem}[Retention of simplicial structure]\label{thm:retention}
Fix a simplicial tolerance parameter $\delta > 0$. 
Assume $\mathcal{H}(t) = (\textbf{x},A^{(1)},A^{(2)}) $ is an adaptive triadic network dynamical system such that: 
\begin{enumerate}
    \item the vector field is $C^1$ on $\mathbb{R}^{N+N^2+N^3}\setminus\mathcal{H}_0$, 
    where the union of vanishing hyperplanes is
    $$
    \mathcal{H}_0 := 
    \bigcup_{1\leq i<j\leq N}\{A^{(1)}_{ij}=0\}
    \;\cup\;
    \bigcup_{1\leq i<j<k\leq N}\{A^{(2)}_{ijk}=0\};
    $$
    \item $\mathcal{H}(t)$ converges to the symmetric regime in time $ t_0\in[0, \infty)$ 
    with parameter $0 < \varepsilon \ll \delta$, so that $A^{(1)}$ and $A^{(2)}$ 
    can be regarded as symmetric in their indices;
    \item the system is outward pointing at every triple $(i,j,k)$ with threshold 
    $\delta$  as in Definition \ref{def:asymptotic_regime}.
\end{enumerate}
If at some time $t_1 \geq t_0$ the configuration 
$(A^{(1)}(t_1),A^{(2)}(t_1))$ lies in $\Omega_\delta$, then
$$
(A^{(1)}(t),A^{(2)}(t)) \in \Omega_\delta 
\qquad \text{for all } t \geq t_1,
$$
i.e.\ once simplicial at threshold $\delta$, the configuration remains simplicial 
for all future time.\\

\end{theorem}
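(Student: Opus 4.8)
The plan is to prove this as a Nagumo-type invariance statement, reducing the global claim on $\Omega_\delta$ to a triple-by-triple analysis of the region $K_{ijk}=\mathbb{R}^4\setminus M^{2,\delta}_{ijk}$. Since $\Omega_\delta=\prod_{i<j<k}K_{ijk}$, the configuration $(A^{(1)}(t),A^{(2)}(t))$ leaves $\Omega_\delta$ only if, for some triple $(i,j,k)$, the $4$-tuple $y_{ijk}(t):=\bigl(A^{(2)}_{ijk}(t),A^{(1)}_{ij}(t),A^{(1)}_{ik}(t),A^{(1)}_{jk}(t)\bigr)$ leaves $K_{ijk}$, so it suffices to show that no $K_{ijk}$ is exited after $t_1$. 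Two preliminary reductions set the stage. First, by the symmetric-regime assumption (condition 2) and $t_1\ge t_0$, for $t\ge t_1$ the antisymmetric and mixed parts are below $\varepsilon\ll\delta$; we may therefore pass to the symmetrised representatives of $A^{(1)},A^{(2)}$ and treat them as genuinely $S_3$-invariant, the $O(\varepsilon)$ shifts of the threshold $\delta$ being negligible. Second, because $\delta>0$, at every point of $\partial M^{2,\delta}_{ijk}$ the controlling coordinate ($|A^{(2)}_{ijk}|$ or some $|A^{(1)}_{\ell p}|$) equals $\delta\neq0$; together with the $C^1$ assumption (condition 1) this makes $t\mapsto|A^{(2)}_{ijk}(t)|$ and $t\mapsto|A^{(1)}_{\ell p}(t)|$ continuously differentiable whenever the trajectory is near $\partial M^{2,\delta}_{ijk}$, with $\tfrac{d}{dt}|A^{(1)}_{\ell p}|=\operatorname{sgn}(A^{(1)}_{\ell p})\,\dot A^{(1)}_{\ell p}$ there, and it makes the strata $X_1,\dots,X_4$ locally affine half-space boundaries.

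For the core step, suppose the conclusion fails; then for some triple $(i,j,k)$ there is $t>t_1$ with $y_{ijk}(t)\in M^{2,\delta}_{ijk}$. Fix such a triple and set $\tau:=\inf\{t\ge t_1:y_{ijk}(t)\in M^{2,\delta}_{ijk}\}$. By continuity and $y_{ijk}(t_1)\in K_{ijk}$, we have $y_{ijk}(\tau)\in\overline{M^{2,\delta}_{ijk}}\cap\overline{K_{ijk}}=\partial M^{2,\delta}_{ijk}=X_1\cup\cdots\cup X_4$, using the explicit boundary \eqref{equation: boundary of bad set}. We then exclude each stratum via the outward-pointing hypothesis (condition 3). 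If $y_{ijk}(\tau)\in X_1$ then $|A^{(2)}_{ijk}(\tau)|=\delta$, and crossing from $K_{ijk}$ into $M^{2,\delta}_{ijk}$ requires $|A^{(2)}_{ijk}|$ to move upward through $\delta$; but $\operatorname{sgn}(A^{(2)}_{ijk})\,\dot A^{(2)}_{ijk}\le0$ on $X_1$ makes $|A^{(2)}_{ijk}|$ instantaneously non-increasing. If $y_{ijk}(\tau)\in X_2$ (and symmetrically on $X_3,X_4$) then $|A^{(1)}_{ij}(\tau)|=\delta$ with $|A^{(2)}_{ijk}|$ and the other two edges above threshold, so the crossing requires $|A^{(1)}_{ij}|$ to drop below $\delta$; but $\operatorname{sgn}(A^{(1)}_{ij})\,\dot A^{(1)}_{ij}\ge0$ on $X_2$. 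The singular points lying on two or more strata carry no single outward normal, but the normal-cone formulation in Definition \ref{def: outward pointing symmetric} is by construction equivalent to the collection of the scalar sign-derivative inequalities just used, so those corners are handled automatically. Assembling the per-triple conclusions through the product description of $\Omega_\delta$ gives $(A^{(1)}(t),A^{(2)}(t))\in\Omega_\delta$ for all $t\ge t_1$.

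The step I expect to be the main obstacle is that the inequalities in condition 3 are \emph{non-strict}, so the first-order argument above does not literally close: one must still rule out a \emph{tangential} contact at which, say, $|A^{(2)}_{ijk}(\tau)|=\delta$ and $\tfrac{d}{dt}|A^{(2)}_{ijk}|=0$ at $t=\tau$, yet $|A^{(2)}_{ijk}|$ curves back above $\delta$ for $t>\tau$. The resolution uses that condition 3 holds on the \emph{entire} stratum $X_1$, not merely along the trajectory: for $t$ slightly past $\tau$ the offending edge is still below $\delta$, so the point $\bigl(\operatorname{sgn}(A^{(2)}_{ijk}(\tau))\,\delta,\,A^{(1)}_{ij}(t),\,A^{(1)}_{ik}(t),\,A^{(1)}_{jk}(t)\bigr)$ lies on $X_1$ and there $\operatorname{sgn}(A^{(2)}_{ijk})\,\dot A^{(2)}_{ijk}\le0$; combining this with a $C^1$ (Lipschitz) bound in the $A^{(2)}_{ijk}$-slot gives $\tfrac{d}{dt}\bigl(|A^{(2)}_{ijk}|-\delta\bigr)\le C\bigl(|A^{(2)}_{ijk}|-\delta\bigr)$ near $\tau$, and Gronwall forces $|A^{(2)}_{ijk}|\le\delta$ for $t\ge\tau$ --- no exit --- with the analogous estimate on the edge slots handling $X_2$--$X_4$. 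Equivalently, this is Nagumo's subtangency criterion applied to the closed set $\overline{K_{ijk}}=\mathbb{R}^4\setminus\operatorname{int}(M^{2,\delta}_{ijk})$, which is locally polyhedral because $\delta>0$; invariance of $K_{ijk}$ itself, not merely of its closure, then follows from the same monotonicity estimates across all four strata. The only further care needed is uniqueness of solutions so that Nagumo's criterion applies, which holds under the standing $C^1$ regularity off $\mathcal{H}_0$ together with continuity across $\mathcal{H}_0$ (as in the models of interest). The geometry --- the explicit boundary \eqref{equation: boundary of bad set} and its outward normals --- is already in hand; it is this tangency-and-regularity bookkeeping where the real work sits.
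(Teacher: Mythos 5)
Your proposal follows the same route as the paper's proof: reduce to the per-triple sets $K_{ijk}$ via the product structure of $\Omega_\delta$, take the first exit time $\tau$, locate the quadruple at time $\tau$ on one of the strata $X_1,\dots,X_4$ of \ref{equation: boundary of bad set}, and contradict the outward-pointing hypothesis of Definition \ref{def: outward pointing symmetric}. Where you genuinely go beyond the paper is the tangency discussion: the paper closes the argument by asserting that entering $M^{2,\delta}_{ijk}$ at $\tau$ forces $n\cdot F^{(ijk)}(x(\tau))<0$ for some outward normal $n$, but with the non-strict inequalities of Definition \ref{def: outward pointing symmetric} this implication is not automatic --- a trajectory can make first contact with $n\cdot F=0$ and still cross at second order. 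Your Gronwall/Nagumo patch, which uses that the sign-derivative inequalities hold on the \emph{entire} stratum rather than only at the contact point, together with the $C^1$ bound away from $\mathcal{H}_0$ and uniqueness of solutions, is the standard and correct way to close this, and it is a step the paper's own proof elides. Two small points to make explicit if you write this up: the comparison point you project onto $X_1$ must satisfy $\min\bigl(|A^{(1)}_{ij}|,|A^{(1)}_{ik}|,|A^{(1)}_{jk}|\bigr)\le\delta$ for the hypothesis to apply there (which does hold near an entry into $M^{2,\delta}_{ijk}$, by continuity); and since $K_{ijk}$ is not closed (points with $|A^{(2)}_{ijk}|=\delta$ and one edge strictly below $\delta$ already lie in $M^{2,\delta}_{ijk}$), Nagumo gives invariance of $\overline{K_{ijk}}$, and you need the per-stratum monotonicity estimates to conclude the trajectory never lands in $M^{2,\delta}_{ijk}$ itself --- your closing remark gestures at this, and it is the right thing to spell out.
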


\begin{proof}
Let $K_{ijk} := \mathbb{R}^4 \setminus M^{2,\delta}_{ijk}$ and $\Omega_\delta := \prod_{i<j<k} K_{ijk}$. 
Assume $(A^{(1)}(t_1),A^{(2)}(t_1)) \in \Omega_\delta$, but suppose for contradiction that there exists $T > t_1$ with 
$(A^{(1)}(T),A^{(2)}(T)) \notin \Omega_\delta$. Define the first exit time
$$
\tau := \inf\{\, t \geq t_1 : (A^{(1)}(t),A^{(2)}(t)) \notin \Omega_\delta \,\}.
$$
Then for some triple $(i,j,k)$ we have $x^{(ijk)}(t) \in K_{ijk}$ for $t < \tau$ and 
$x^{(ijk)}(\tau) \in \partial M^{2,\delta}_{ijk}$.

By definition of the boundary, $x^{(ijk)}(\tau)$ lies on one of the faces $X_1,X_2, X_3,X_4$ from
\ref{equation: boundary of bad set}.Thus, there exists an outward normal $n \in N_{\partial M}(x^{(ijk)}(\tau))$. 
To enter $M^{2,\delta}_{ijk}$ at time $\tau$, at least one active constraint must decrease, i.e.
$
n \cdot F^{(ijk)}\bigl(x^{(ijk)}(\tau)\bigr) < 0
$
for some such $n$. However, by the outward--pointing condition (Definition~\ref{def: outward pointing symmetric}), we have
$
n \cdot F^{(ijk)}(x) \geq 0 $ for every $ x \in \partial M^{2,\delta}_{ijk}, \; n \in N_{\partial M}(x).$
This yields a contradiction. Hence no exit from $K_{ijk}$ is possible, and therefore 
$x^{(ijk)}(t) \in K_{ijk}$ for all $t \geq t_1$ and every triple $(i,j,k)$. It follows that
$
(A^{(1)}(t),A^{(2)}(t)) \in \Omega_\delta $ for all $t \geq t_1,$
as claimed.
\end{proof}
\vspace{1em}

\begin{remark}\label{remark: interior drift symmetric}
Theorem \ref{thm:retention} is a stability theorem; it requires that a simplicial complex structure is already present on the hypergraph after convergence to the symmetric regime. This, of course, need not necessarily be the case.  In order to get emergence of such a simplicial complex, consider the following.  If we start without a simplicial complex, it is because there exists a degree $2$ simplex $A^{(2)}_{ijk}$  above the $\delta$-threshold, but at least one of the degree $1$ subfaces is below the threshold, i.e $\min\left( |A^{(1)}_{ij}|, |A^{(1)}_{ik}| , |A^{(1)}_{jk}| \right) < \delta$. Therefore, to enter the desired region, we require that the global simplicial region $\Omega_\delta$ is a \emph{global attracting set. }
\end{remark}

\vspace{1em}

\begin{definition}[Global Attracting Set]\label{def: global attractor}
Let $(X,d)$ be a metric space and let $\{\Phi^t\}_{t \geq 0}$ denote a dynamical system on $X$. 
A set $\mathcal{A} \subseteq X$ is called a \emph{global attracting set} if the following conditions hold:
\begin{enumerate}
    \item \textbf{Invariance:} For all time $t \geq 0$, we have that $ \Phi^t(\mathcal{A}) = \mathcal{A}.$
    \item \textbf{Global attraction:} There exists a time $0 \leq t^\prime < \infty$ such that for all $x_0 \in X,$ $\Phi^{t^\prime}(x_0) \in A.$ 
\end{enumerate}
\end{definition}

If we now suppose our vector field satisfies both the outward pointing condition from Definition \ref{def: outward pointing symmetric}, and the set $\Omega_\delta$ is a global attracting set, then we now a simplicial complex will arise at some point.  We can thus formulate the following corollary to Theorem \ref{thm:retention}.\\

\begin{corollary}\label{corollary: emergent simplicial complex symmetric}
Let $(\textbf{x},A^{(1)},A^{(2)})$ be an adaptive triadic network dynamical system that gives rise to a hypergraph $\mathcal{H}(t)$. Assume the hypotheses of Theorem \ref{thm:retention}. Suppose furthermore that the global simplicial region $\Omega^\delta$ is a global attracting set in the sense of Definition \ref{def: global attractor}. Then there exists a time $0 \leq t_0 < \infty$ such that for all $t^\prime \geq t_0,$ $\mathcal{H}(t^\prime)$ is a simplicial complex.
\end{corollary}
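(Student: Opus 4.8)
The plan is to combine the global-attraction hypothesis with the retention result of Theorem \ref{thm:retention}. The attracting-set hypothesis supplies a finite time $t^\ast$ with $\Phi^{t^\ast}(x_0)\in\Omega_\delta$ for the initial configuration $x_0=(\mathbf{x}(0),A^{(1)}(0),A^{(2)}(0))$; in other words, at the single time $t^\ast$ the configuration $(A^{(1)}(t^\ast),A^{(2)}(t^\ast))$ already lies in $\Omega_\delta$, so a simplicial complex at threshold $\delta$ is present at that instant. The retention theorem, applied with $t_1=\max(t_0,t^\ast)$ where $t_0$ is the symmetric-convergence time from hypothesis (2) of Theorem \ref{thm:retention}, then forces the configuration to stay in $\Omega_\delta$ for all later times. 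Setting the corollary's time to $\max(t_0,t^\ast)$ completes the argument.

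Concretely, the steps I would carry out are: (i) invoke Definition \ref{def: global attractor}, item 2, to obtain the finite entry time $t^\ast<\infty$ with $\Phi^{t^\ast}(x_0)\in\Omega_\delta$; (ii) observe that this is exactly the hypothesis ``at some time $t_1\geq t_0$ the configuration lies in $\Omega_\delta$'' required by Theorem \ref{thm:retention}, after enlarging $t^\ast$ to $t_1:=\max(t_0,t^\ast)$ and using invariance (item 1 of Definition \ref{def: global attractor}) to ensure the configuration is still in $\Omega_\delta$ at $t_1$ if $t_1>t^\ast$; (iii) apply Theorem \ref{thm:retention} to conclude $(A^{(1)}(t),A^{(2)}(t))\in\Omega_\delta$ for all $t\geq t_1$; (iv) translate membership in $\Omega_\delta$ back into the statement that $\mathcal{H}(t)$ is a simplicial complex with parameter $\delta$, which is immediate from the definition $\Omega_\delta=\prod_{i<j<k}K_{ijk}$ with $K_{ijk}=\mathbb{R}^4\setminus M^{2,\delta}_{ijk}$ and the definition of a simplicial complex at threshold $\delta$. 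Finally set $t_0:=t_1$ in the corollary's conclusion.

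The only genuine subtlety — and the step I would flag as the main obstacle — is the bookkeeping around the two distinct ``entry'' times: the symmetric-regime convergence time $t_0$ from Theorem \ref{thm:retention}(2) and the attraction time $t^\ast$ from the global-attractor hypothesis. These need not coincide, and Theorem \ref{thm:retention} is only stated for configurations that land in $\Omega_\delta$ at a time $t_1\geq t_0$. One must therefore argue that the configuration is still (or again) in $\Omega_\delta$ at $\max(t_0,t^\ast)$; this is where invariance of the attracting set, $\Phi^t(\mathcal{A})=\mathcal{A}$, does the work — strictly speaking one wants $\Phi^t(x_0)\in\Omega_\delta$ for all $t\geq t^\ast$, which follows because for $t\geq t^\ast$ we have $\Phi^t(x_0)=\Phi^{t-t^\ast}(\Phi^{t^\ast}(x_0))\in\Phi^{t-t^\ast}(\mathcal{A})=\mathcal{A}=\Omega_\delta$ (here identifying $\mathcal{A}$ with $\Omega_\delta$). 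After this remark the proof is a one-line composition of the two results, with no new estimates required.
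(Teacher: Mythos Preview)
Your proposal is correct and matches the paper's intended argument; the paper itself does not supply a written proof for this corollary, treating it as an immediate consequence of combining the global-attraction hypothesis with Theorem~\ref{thm:retention}, exactly as you outline. Your careful treatment of the two distinct entry times (symmetric-regime time versus attraction time) and the use of invariance to bridge them is in fact more thorough than what the paper provides --- note, incidentally, that once you invoke invariance of $\Omega_\delta$ (item~1 of Definition~\ref{def: global attractor}) to stay inside from $t^\ast$ onward, the appeal to Theorem~\ref{thm:retention} in step~(iii) becomes redundant, though this redundancy is already baked into the corollary's hypotheses.
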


\subsection{Examples}

\begin{example}\label{example: symmetric emergence kuramoto type example}
Consider the following example, which satisfies the conditions of Corollary \ref{corollary: emergent simplicial complex symmetric}. We first introduce a non-smooth model,  and then smoothen out using known approximations. The general dynamics follow the template of Example \ref{def:ATKHH}.

$$
\dot\theta_i
   =\omega_i
     +\frac1N\sum_{j=1}^{N}A^{(1)}_{ij}\,
        \sin(\theta_j-\theta_i)
     +\frac1{N^{2}}\sum_{j,k=1}^{N}A^{(2)}_{ijk}\,
        \sin(\theta_j+\theta_k-2\theta_i),
$$
$$
\dot A^{(1)}_{ij}
  =-\alpha\bigl(A^{(1)}_{ij}-\cos(\theta_i-\theta_j)\bigr)
   +\beta\,J_{ij}\,\delta\,
    \text{sgn}_{\!s}\bigl(A^{(1)}_{ij}\bigr),
\qquad
\dot A^{(2)}_{ijk}
  =-\gamma\bigl(A^{(2)}_{ijk}-\delta\cos(\theta_i+\theta_j+\theta_k)\bigr),
$$
where $\alpha,\gamma,\delta, \beta > 0$, and
$$
\text{sgn}_{\!s}(A^{(1)}_{ij})=\text{sgn}\!\left(\frac{A^{(1)}_{ij}+A^{(1)}_{ji}}{2}\right),\qquad
J_{ij}=J_{ji}:=\begin{cases}
1 & \text{if }\min\{|A^{(1)}_{ij}|,|A^{(1)}_{ji}|\}\leq\delta\text{ and } \exists k:\,|A^{(2)}_{ijk}|\geq\delta,\\
0 & \text{otherwise.}
\end{cases}
$$
The above dynamics highlight the simplicial preserving nature of this system, in particular, the edge dynamics positively reinforce whenever they are under the simplicial threshold and a parent face lies above it. A technical obstacle is that the above adaptation rules for edges and hyperedges  
involve nonsmooth maps such as $\max$, $\min$, and a indicator function.  This causes an issue, since  the vector field fails to be $C^1$ off the vanishing hyperplanes $\mathcal{H}_0$, so our outward-pointing arguments may not necessarily hold.  To remedy this, 
we introduce smooth approximations with a small smoothing scale $0 < \zeta \ll 1$. These approximations can be found  
in convex analysis and in some machine learning algorithms, among other areas, \cite{boyd2004convex}. 

\begin{enumerate}
    \item \textbf{Log--sum--exp for $\max$.} For $x=(x_1,\dots,x_N)\in\mathbb{R}^N$ we set
    $$
    \max\nolimits_\zeta(x_1,\dots,x_N)
    :=\zeta\log\!\Big(\sum_{k=1}^N e^{x_k/\zeta}\Big).
    $$
    This is convex and $C^\infty$. As $\zeta\to 0$, it converges uniformly on compacts to $\max_k x_k$. 
    The error is controlled by
    $$
    0 \;\leq\; \max\nolimits_\zeta(x)-\max_k x_k \;\leq\; \zeta\log N.
    $$

    \item \textbf{Negative log--sum--exp for $\min$.} Since $\min(a,b)=-\max(-a,-b)$, we define
    $$
    \min\nolimits_\zeta(a,b):=-\zeta\log\!\big(e^{-a/\zeta}+e^{-b/\zeta}\big).
    $$
    This is smooth and concave, converging to $\min(a,b)$ with bounds
    $$
    \min(a,b)-\zeta\log 2 \;\leq\; \min\nolimits_\zeta(a,b)\;\leq\;\min(a,b).
    $$

    \item \textbf{Smoothed Heaviside and sign.} The step function $H(z)=\mathbf{1}_{\{z>0\}}$ 
    and $\text{sgn}(z)$ are replaced by
    $$
    H_\zeta(z)=\tfrac12\big(1+\tanh(z/\zeta)\big),\qquad 
    \text{sgn}_\zeta(z)=\tanh(z/\zeta).
    $$
    Both converge pointwise as $\zeta\to 0$. Moreover the error away from the switching layer 
    is exponentially small:
    $$
    \big|\text{sgn}_\zeta(z)-\text{sgn}(z)\big|\;\leq\;2e^{-2|z|/\zeta},\qquad
    \big|H_\zeta(z)-H(z)\big|\;\leq\;e^{-2|z|/\zeta}.
    $$

    \item \textbf{Symmetrised sign.} For interactions depending on two arguments one may use 
    the symmetric smoothing
    $$
    \text{sgn}_{s,\zeta}(x,y)=\tanh\!\big((x+y)/(2\zeta)\big),
    $$
    which converges to $\text{sgn}(x)=\text{sgn}(y)$ when both are aligned.
\end{enumerate}
The smoothened dynamics now read as follows, 
$$
\dot\theta_i
=\omega_i
+\frac{1}{N}\sum_{j=1}^{N}A^{(1)}_{ij}\,\sin(\theta_j-\theta_i)
+\frac{1}{N^{2}}\sum_{j,k=1}^{N}A^{(2)}_{ijk}\,\sin(\theta_j+\theta_k-2\theta_i),
$$
$$
\dot A^{(1)}_{ij}
=-\alpha\bigl(A^{(1)}_{ij}-\cos(\theta_i-\theta_j)\bigr)
+\beta\,J^{(\zeta)}_{ij}\,\delta\,
\text{sgn}_{\!s,\zeta}\!\bigl(A^{(1)}_{ij},A^{(1)}_{ji}\bigr),
\qquad
\dot A^{(2)}_{ijk}
=-\gamma\bigl(A^{(2)}_{ijk}-\delta\cos(\theta_i+\theta_j+\theta_k)\bigr),
$$

where $$
J^{(\zeta)}_{ij} := H_\zeta\!\left(\delta-\min_\zeta\big(|A^{(1)}_{ij}|,|A^{(1)}_{ji}|\big)\right)\,
H_\zeta\!\left(\max_\zeta\{|A^{(2)}_{ij1}|,\dots,|A^{(2)}_{ijN}|\}-\delta\right).
$$

By construction, the above dynamics are $C^\infty$ on $\mathbb{R}^{N + N^2 + N^3}\setminus \mathcal{H}_0.$ Now, we show that our dynamics satisfy the hypotheses of Corollary \ref{corollary: emergent simplicial complex symmetric}. First we show it converges to the symmetric regime. Let $A^{(1)}=A^{(1)}_{\text{sym}} + A^{(1)}_{\text{alt}} $ be the orthogonal decomposition into the symmetric and alternating parts, and define
$$
M_{ij}=\cos(\theta_i-\theta_j),\qquad
B_{ij}=\beta\,\delta\,J^{(\zeta)}_{ij}\,\text{sgn}_{s,\zeta}\!\bigl(A^{(1)}_{ij},A^{(1)}_{ji}\bigr).
$$
By construction $M$ is symmetric and $B_{ij}=B_{ji}$, hence $B$ is symmetric. Projecting the edge dynamics gives
$$
\dot A^{(1)}_{\text{sym}} =-\alpha\,(A^{(1)}_{\text{sym}}-M)+B,\qquad \dot A^{(1)}_{\text{alt}}=-\alpha\,A^{(1)}_{\text{alt}},
$$
so $A^{(1)}_{\text{alt}}(t)=e^{-\alpha t}A^{(1)}_{\text{alt}}(0)\to 0$ exponentially. For the triads, splitting
$A^{(2)}=A^{(2)}_{\text{sym}}+A^{(2)}_{\text{alt}}+A^{(2)}_{\text{mix}}$
yields
$$
\dot A^{(2)}_{\text{sym}}=-\gamma\,(A^{(2)}_{\text{sym}}-\delta\cos(\theta_i+\theta_j+\theta_k)),
\quad
\dot A^{(2)}_{\text{alt}}=-\gamma\,A^{(2)}_{\text{alt}},
\quad
\dot A^{(2)}_{\text{mix}}=-\gamma\,A^{(2)}_{\text{mix}},
$$
hence $A^{(2)}_{\text{alt}}(t),A^{(2)}_{\text{mix}}(t)\to 0$. Consequently the system converges to the symmetric regime.

Next, we verify that the dynamics satisfy the outward pointing boundary conditions. 
Let $X_1$ be as in \ref{equation: boundary of bad set}. On $X_1$,
$$
\text{sgn}\!\big(A^{(2)}_{ijk}\big)\,\dot A^{(2)}_{ijk}
=-\gamma\Big(|A^{(2)}_{ijk}|-\delta\,\text{sgn}\!\big(A^{(2)}_{ijk}\big)\cos(\theta_i+\theta_j+\theta_k)\Big)
\le 0,
$$
since $|A^{(2)}_{ijk}|=\delta$ and $|\cos|\le 1$.

For the binary boundary $X_2$, we check after the system has entered the symmetric regime, so $A^{(1)}_{ij}=A^{(1)}_{ji}$. Then
$$
\text{sgn}\!\big(A^{(1)}_{ij}\big)\,\text{sgn}_{s,\zeta}\!\big(A^{(1)}_{ij},A^{(1)}_{ji}\big)
=\tanh\!\left(\tfrac{|A^{(1)}_{ij}|}{\zeta}\right)\ge \tanh\!\left(\tfrac{\delta}{\zeta}\right)=:m_\zeta\in(0,1),
$$
and, because on $X_2$ we have $|A^{(1)}_{ij}|=\delta$ and there exists $ k$ such that $|A^{(2)}_{ijk}|\ge\delta$,
$$
J^{(\zeta)}_{ij}\ge H_\zeta(0)\,H_\zeta(0)=\tfrac14 =: \underline J_\zeta.
$$
Therefore, on $X_2$,
$$
\text{sgn}\!\big(A^{(1)}_{ij}\big)\,\dot A^{(1)}_{ij}
\;\ge\; -\alpha(\delta+1)+\beta\,\delta\,\underline J_\zeta\,m_\zeta.
$$
Choosing

$$
\beta  >   \frac{\alpha(\delta+1)}{\delta\,\underline J_\zeta\,m_\zeta}
\;=\; \frac{4\alpha(\delta+1)}{\delta\,\tanh(\delta/\zeta)}
$$

ensures $\text{sgn}(A^{(1)}_{ij})\,\dot A^{(1)}_{ij}\ge 0$ on $X_2$; the same argument applies to $X_3$ and $X_4$.

Thus our dynamics satisfy the hypotheses of Theorem \ref{thm:retention} and thus retain a simplicial complex structure if one is present after convergence to the symmetric regime. To show one will emerge, we must show that the set $\Omega_\delta$ is a global attracting set for the dynamical system. 

To this end, fix a triple $(i,j,k)$ with $|A^{(2)}_{ijk}|\ge \delta$ and suppose some edge, say $|A^{(1)}_{ij}|<\delta$, violates downward closure. Then
\[
J^{(\zeta)}_{ij}\ \ge\ \tfrac14,\qquad 
\text{sgn}\!\left(A^{(1)}_{ij}\right)\text{sgn}_{\zeta}\!\left(A^{(1)}_{ij},A^{(1)}_{ji}\right)
=\tanh\!\Big(\tfrac{|A^{(1)}_{ij}|}{\zeta}\Big)\ \ge\ m_\zeta:=\tanh(\delta/\zeta).
\]
Hence
\[
\text{sgn}\!\left(A^{(1)}_{ij}\right)\dot A^{(1)}_{ij}
\ \ge\ -\alpha(\delta+1)+\beta\,\frac{\delta m_\zeta}{4}.
\]
By the parameter condition, the right-hand side is bounded below by some $\eta>0$. Thus every violating edge under an active parent triad increases in magnitude with uniform speed until it reaches $\delta$, with hitting time bounded by
\[
t_{\mathrm{hit}}\ \le\ \frac{\delta-|A^{(1)}_{ij}(0)|}{\eta}\ <\infty.
\]

We numerically simulate these dynamics to illustrate the emergence of a simplicial complex structure and the system's convergence to the symmetric regime. The system was integrated over $t \in [0,25]$ with $250$ evaluation points. The network consisted of $N=4$ oscillators with natural frequencies $\omega_i \sim \mathcal{N}(0,0.5^2)$, initial phases drawn uniformly from $[0,2\pi)$, and random initial binary and triadic weights in $(-0.25,0.25)$. To test the closure property, we introduced a single simplicial violation by setting one edge $(0,1)$ to be weak while its parent triad $(0,1,2)$ was strong. The remaining parameters were chosen as $\alpha=0.5$, $\beta=25.0$, $\gamma=0.8$, $\delta=0.5$, and $\zeta=0.05$, which satisfy the analytical conditions for simplicial emergence. The results (Figure~\ref{fig:transient_simplicial_plots}) confirm that all non-symmetric components decay exponentially, and a simplicial complex structure is observed. The full implementation and initial conditions are detailed in the code repository.


\begin{figure}[h!]
    \centering
    
    \begin{subfigure}[b]{\textwidth}
        \centering
        \begin{overpic}[width=\textwidth] 
            {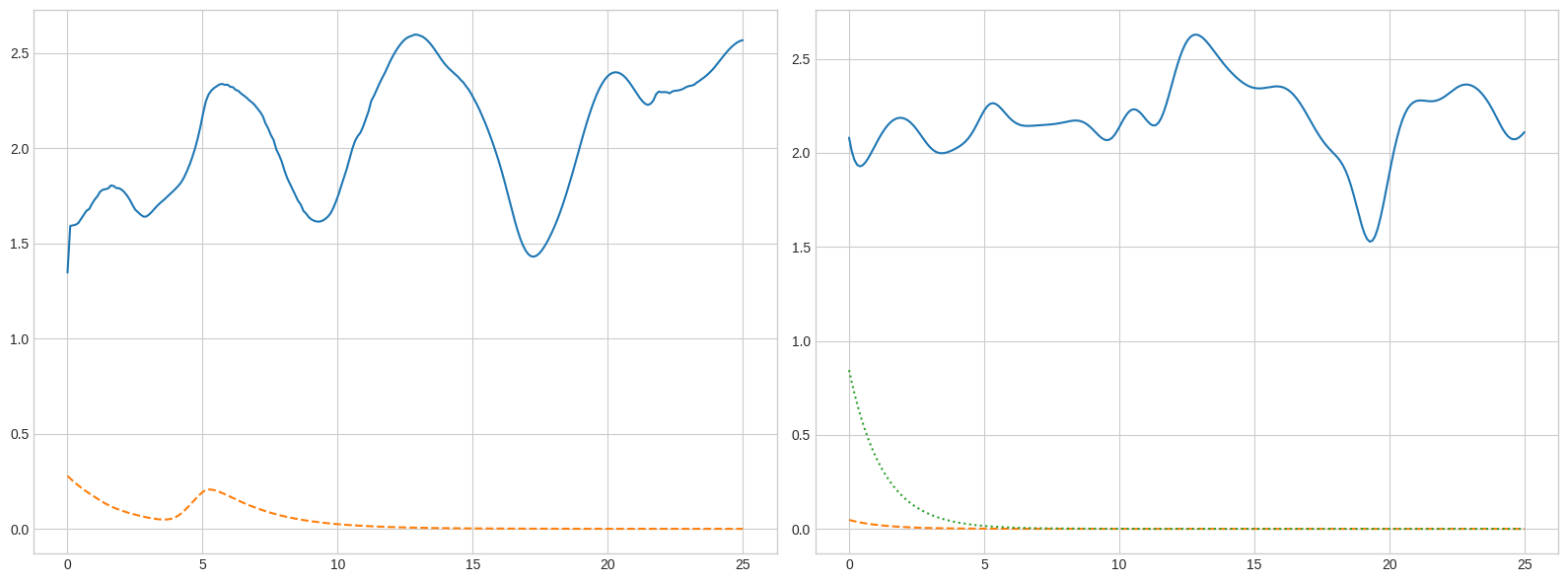}
            
            \put(0, 40){\small Frobenius Norm } 
            \put(20, -2){\small Time} 
            \put(15, 96){\small \textbf{Binary Layer (Edges)}}

            \put(20, 40){\legendline{blue}{$\Vert A^{(1)}_{\text{sym}}\Vert_F$}}
            \put(33, 40){\legenddashed{orange}{$\Vert A^{(1)}_{\text{alt}}\Vert_F$}}
            
            \put(50, 40){\small Frobenius norm} 
            \put(70, -2){\small Time} 
            \put(63, 96){\small \textbf{Triadic Layer (Triangles)}}

            \put(66, 40){\legendline{blue}{$\Vert A^{(2)}_{\text{sym}}\Vert_F$}}
            \put(78, 40){\legenddashed{orange}{$\Vert A^{(2)}_{\text{alt}}\Vert_F$}}
            \put(90, 40){\legenddotted{green}{$\Vert A^{(2)}_{\text{mix}}\Vert_F$}}
            
        \end{overpic}
        
        \vspace{1em}
        \caption{Time series of the Frobenius norms for the isotypic components of the binary tensor $A^{(1)}$ (left) and the triadic tensor $A^{(2)}$ (right). The norms of the antisymmetric part ($\Vert A^{(1)}_{\text{alt}}\Vert_F$) and the antisymmetric and mixed parts of the triad ($\Vert A^{(2)}_{\text{alt}}\Vert_F$, $\Vert A^{(2)}_{\text{mix}}\Vert_F$) decay exponentially to zero, confirming convergence to the symmetric regime.}
        \label{fig:convergence_plots}
    \end{subfigure}
    
    \vspace{2em} 
    
    \begin{subfigure}[b]{\textwidth}
        \centering
        \begin{overpic}[width=0.9\textwidth] 
            {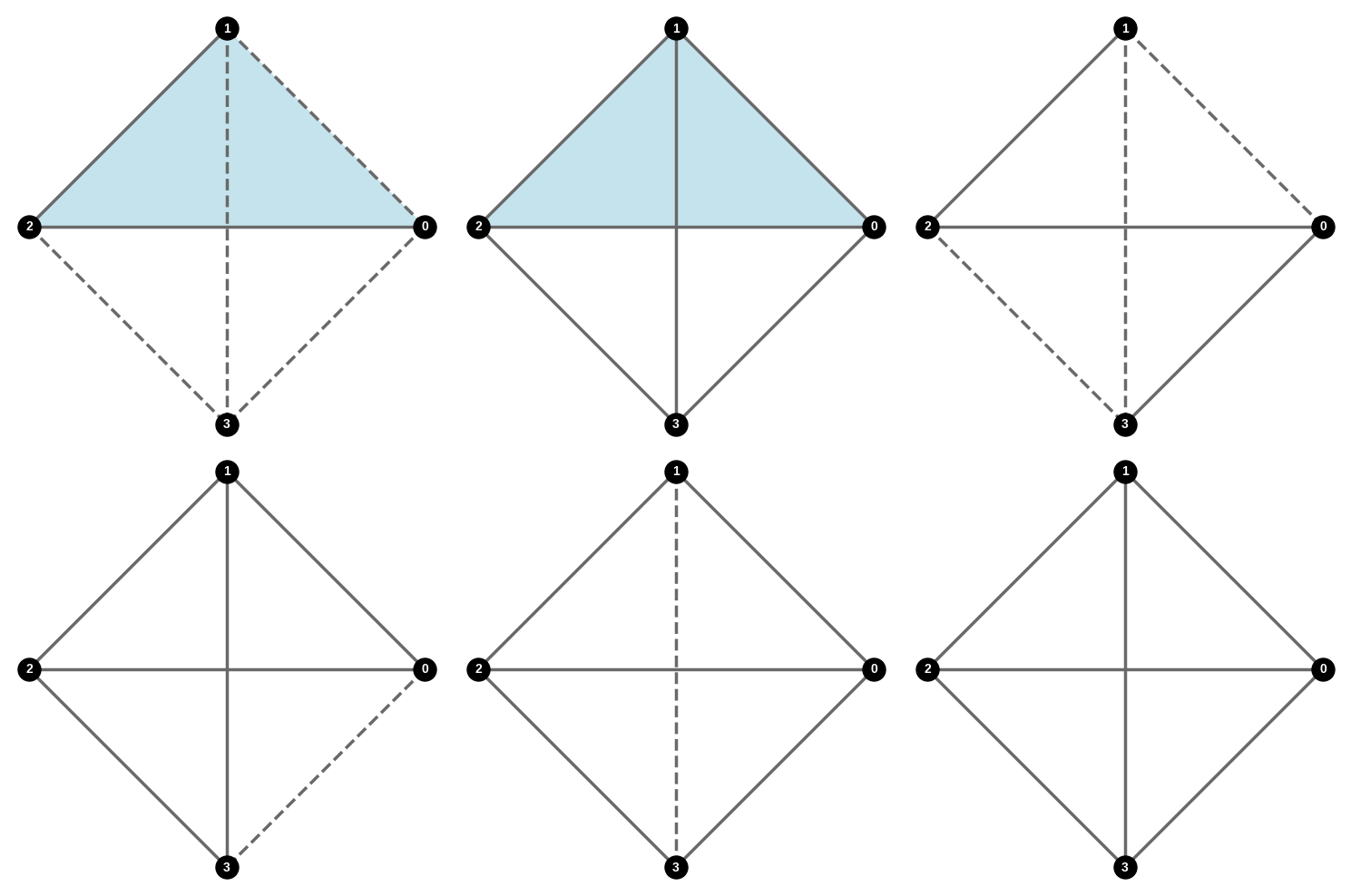} 
            
            \put(2, 38){\small t = 0.00}
            \put(35, 38){\small t = 4.92}
            \put(68, 38){\small t = 9.84}
            
            \put(2, 5){\small t = 14.96}
            \put(35, 5){\small t = 19.98}
            \put(68, 5){\small t = 25.00}

            \put(18, -3){
                \small
                \definecolor{strongtriad}{rgb}{0.67, 0.84, 0.9} 
                
                \color{dimgray}\rule[0.5ex]{6mm}{1.5pt}\color{black} Strong Edge ($>$ 0.5) \quad
                \color{dimgray}\rule[0.5ex]{2.5mm}{1.5pt} \rule[0.5ex]{1mm}{0mm} \rule[0.5ex]{2.5mm}{1.5pt}\color{black} Weak Edge ($<$ 0.5) \quad
                \color{strongtriad}\rule[0.2ex]{5mm}{3mm} \color{black} Strong Triad ($>$ 0.5)
            }
        \end{overpic}
        
        \vspace{2em} 
        \caption{Snapshots of the network structure at different times. The system starts with a simplicial complex violation at $t=0$, where the triad (0,1,2) is strong (blue shading) but its edge (0,1) is weak (dashed line). The adaptive dynamics reinforce the weak edge, restoring the downward closure property. A persistent triad is observed before decaying.}
        \label{fig:simplicial_evolution}
    \end{subfigure}

    \caption[Simulation of simplicial emergence (transient case).]{Simulation of simplicial emergence (transient case). (a) Frobenius norms converging to the symmetric regime. (b) Network snapshots showing restoration of downward closure.}
    \label{fig:transient_simplicial_plots}
\end{figure}

\end{example}

The previous example demonstrates that, although a simplicial complex structure is preserved by the dynamics, this structure can become trivial, effectively collapsing onto a $1$–simplicial complex, i.e. a graph. In this regime, the higher–order components decay and the network behaves as if only pairwise interactions were present. This observation aligns with much of the existing network science literature over the past decades, where higher–order effects are often neglected: even when such terms are present in the governing equations, they frequently vanish asymptotically and leave no persistent signature in the higher–order structure.  Such a phenomenon was explored in \cite{horstmeyer2020adaptive}, where the authors studied an adaptive voter model explicitly defined on a simplicial complex. This framework included not just pairwise interactions on edges (1-simplices) but also group peer-pressure effects on filled-in triangles (2-simplices). They found that the adaptive rewiring dynamics led to a depletion of the higher-order structures. The model exhibited a multiscale hierarchy where the higher-order 2-simplices die out on a faster time scale, vanishing before the dynamics on the edges had depleted . This sequential decay effectively collapses the higher-order network onto a simple graph, after which the system evolves as a standard pairwise adaptive model. This result provides a direct justification for treating certain adaptive systems as effectively pairwise, as the higher-order components are shown to be transient and vanish dynamically.

In contrast, the next example illustrates a qualitatively different behaviour, one in which higher–order interactions persist in a non-trivial and structured manner. In this case, the system maintains a subset of $2$–simplices over time without filling in all possible higher–order connections, yielding a genuinely simplicial, but not fully saturated, complex.\\

\begin{example} 
    Consider the following dissipative, consensus based,  adaptive, triadic network dynamical system, defined on $N$ nodes each with a state $x_i(t) \in \mathbb{R}.$
    \begin{align*}
\dot{x}_i &= \frac{1}{N}\sum_{j=1}^{N} A_{ij}^{(1)}(x_j - x_i) + \frac{1}{N^2}\sum_{j,k=1}^{N} A_{ijk}^{(2)} \left( \frac{x_j + x_k}{2} - x_i \right), \\
\dot{A}_{ij}^{(1)} &= -\alpha \left( A_{ij}^{(1)} - \kappa_1 e^{-\lambda_1(x_i - x_j)^2} \right) + \beta \delta J_{ij}^{(\zeta)} \text{sgn}_{s,\zeta}(A_{ij}^{(1)}, A_{ji}^{(1)}), \\
\dot{A}_{ijk}^{(2)} &= -\gamma \left( A_{ijk}^{(2)} - \kappa_2 e^{-\lambda_2 V_{ijk}} \right),
\end{align*}
where $V_{ijk} = \frac{1}{6} \sum_{p,q \in \{i,j,k\}} (x_p-x_q)^2$ and $J_{ij}^{(\zeta)}$ and $\text{sgn}_{s, \zeta}$ are the same as in Example~\ref{example: symmetric emergence kuramoto type example}, $0 < \zeta \ll 1$ is a smoothing parameter.

We make the following observations about the dynamics. 

\begin{itemize}
  \item \emph{Nodes.} The terms $A^{(1)}_{ij}(x_j-x_i)$ implement pairwise diffusion. The triadic term $A^{(2)}_{ijk}\big(\frac{x_j+x_k}{2}-x_i\big)$ is a higher-order consensus force pulling node $i$ towards the average of $j$ and $k$.
  \item \emph{Triads.} $V_{ijk}$ measures the local variance on $\{i,j,k\}$ and is symmetric in $(i,j,k)$. Thus $A^{(2)}_{ijk}$ relaxes towards the high target $\kappa_2 \gg 0$ whenever the triple is nearly in consensus ($V_{ijk}\approx 0$), creating a positive feedback that sustains strong triads near consensus.
  \item \emph{Edges.} $A^{(1)}_{ij}$ follows a relaxation to the symmetric target $\kappa_1 e^{-\lambda_1(x_i-x_j)^2}$, while the $J^{(\zeta)}_{ij}$–term enforces simplicial closure by reinforcing weak edges that lie under a strong parent triad.
\end{itemize}

We now verify that the above satisfies all assumptions of Corollary \ref{corollary: emergent simplicial complex symmetric}.
\begin{enumerate}
    \item[\textbf{(i)}] \textit{Smoothness.}
    The right-hand side consists of compositions of smooth functions
    ($e^{-z^2}$, smoothed sign $\text{sgn}_{s,\zeta}$, and $J^{(\zeta)}_{ij}$ built from $H_\zeta$ and $\min_\zeta$),
    hence the vector field
$(\dot{ \textbf{x}}, \dot A^{(1)}, \dot A^{(2)})$
    is $C^\infty$ on $\mathbb{R}^{N + N^2 + N^3}\setminus H_0$.
    Therefore, the regularity hypothesis of Theorem \ref{thm:retention} (and thus of Corollary\ref{corollary: emergent simplicial complex symmetric}) holds.

    \item[\textbf{(ii)}] \textit{Convergence to the symmetric regime.}
    The evolution laws for both layers drive antisymmetric and mixed components to zero.
    Writing $A^{(1)} = A^{(1)}_{\mathrm{sym}} + A^{(1)}_{\mathrm{alt}}$ and
    $A^{(2)} = A^{(2)}_{\mathrm{sym}} + A^{(2)}_{\mathrm{alt}} + A^{(2)}_{\mathrm{mix}}$,
    we obtain
    \begin{align*}
    \dot A^{(1)}_{\mathrm{sym}} &= -\alpha\!\left(A^{(1)}_{\mathrm{sym}} - M\right) + B,
    &\dot A^{(1)}_{\mathrm{alt}} &= -\alpha A^{(1)}_{\mathrm{alt}},\\[3pt]
    \dot A^{(2)}_{\mathrm{sym}} &= -\gamma\!\left(A^{(2)}_{\mathrm{sym}} - \kappa_2 e^{-\lambda_2 V}\right),
    &\dot A^{(2)}_{\mathrm{alt}} &= -\gamma A^{(2)}_{\mathrm{alt}},
    &\dot A^{(2)}_{\mathrm{mix}} &= -\gamma A^{(2)}_{\mathrm{mix}},
    \end{align*}
    where $M_{ij} = \kappa_1 e^{-\lambda_1(x_i - x_j)^2}$ and $B_{ij} = \beta\delta J^{(\zeta)}_{ij}\text{sgn}_{s,\zeta}(A^{(1)}_{ij},A^{(1)}_{ji})$
    are symmetric. \\
    Hence $A^{(1)}_{\mathrm{alt}}(t), A^{(2)}_{\mathrm{alt}}(t), A^{(2)}_{\mathrm{mix}}(t) \to 0$ exponentially,
    and the system converges to the symmetric regime.

    \item[\textbf{(iii)}] \textit{Outward–pointing condition.}
    On the triadic boundary $X_1 = \{|A^{(2)}_{ijk}| = \delta\}$ we have
    \[
        \text{sgn}(A^{(2)}_{ijk}) \dot A^{(2)}_{ijk}
        = -\gamma\!\left(|A^{(2)}_{ijk}| - \kappa_2 e^{-\lambda_2 V_{ijk}}\text{sgn}(A^{(2)}_{ijk})\right)
        \le 0,
    \]
    since $|A^{(2)}_{ijk}| = \delta$ and $e^{-\lambda_2 V_{ijk}} \le 1$.
    On the binary boundary $X_2 = \{|A^{(1)}_{ij}| = \delta\}$ (after the system has reached symmetry),
    we obtain
    \[
        \text{sgn}(A^{(1)}_{ij}) \dot A^{(1)}_{ij}
        = -\alpha\!\left(|A^{(1)}_{ij}| - \kappa_1 e^{-\lambda_1 (x_i - x_j)^2}\right)
        + \beta\delta\,J^{(\zeta)}_{ij} \ge 0,
    \]
    provided $\kappa_1 \ge \delta$ and $J^{(\zeta)}_{ij}\ge 0$,
    ensuring the flow is directed inward at all boundary faces.
    Thus the outward–pointing condition of Definition~3.6 is satisfied.

    \item[\textbf{(iv)}] \textit{Global attraction of $\Omega_\delta$.}
    The adaptation dynamics ensure that whenever a simplicial violation occurs,
    i.e.\ a triad above threshold sits above a weak edge,
    the $J^{(\zeta)}_{ij}$ term activates to reinforce the edge until closure is restored.
    Hence $\Omega_\delta$ is globally attractive.
\end{enumerate}
Therefore all hypotheses of Corollary \ref{corollary: emergent simplicial complex symmetric} hold, and there exists
$t_0 < \infty$ such that for all $t' \ge t_0$, the hypergraph $\mathcal{H}(t')$ forms a simplicial complex at threshold $\delta$.

We numerically simulate these dynamics to illustrate simplicial emergence with persistence. The simulation uses $N=4$ nodes with initial conditions $x_0 = (0.10,\,0.15,\,0.20,\,2.00)$ and initial weights $\sim U(-0.25,0.25)$. A single simplicial violation was set on the (0,1,2) triad (using symmetric entries): $A^{(2)}_{012} = 0.8$, $A^{(1)}_{02} = 0.6$, $A^{(1)}_{12} = 0.7$, and the weak edge $A^{(1)}_{01} = 0.1$. The model parameters are $\zeta = 0.05$, $\alpha = 0.5$, $\beta = 25.0$, $\gamma = 0.8$, $\delta = 0.5$, $\kappa_1 = 1.0$, $\kappa_2 = 1.2$, $\lambda_1 = 2.0$, and $\lambda_2 = 5.0$. The results confirm convergence to the symmetric regime (Figure \ref{fig:persistent_convergence_plots}) and show the restoration of downward closure with a persistent triad (Figure \ref{fig:persistent_simplicial_evolution}). The full implementation for this example is available in the code repository.
\vspace{1em}

\begin{figure}[h!]
    \centering

    \begin{subfigure}[b]{\textwidth}
        \centering
        \begin{overpic}[width=\textwidth] 
            {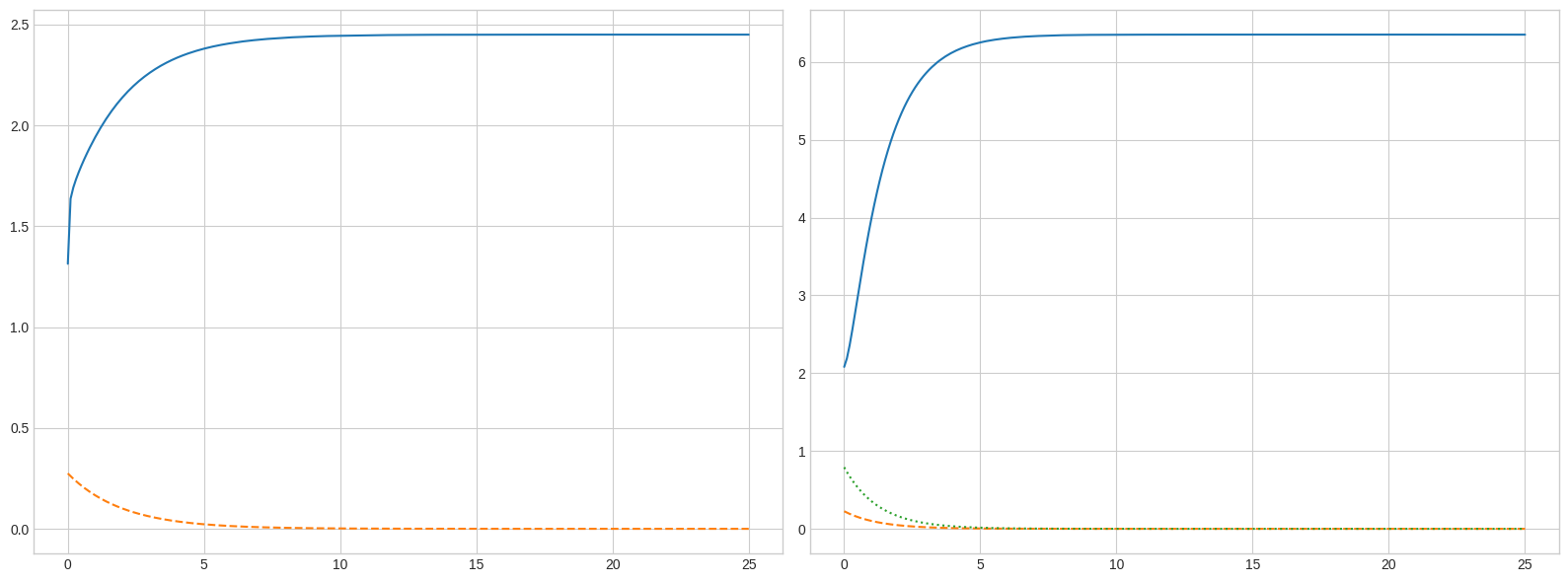}

            \put(0, 40){\small Frobenius Norm } 
            \put(20, -2){\small Time} 
            \put(15, 96){\small \textbf{Binary Layer (Edges)}}

            \put(20, 40){\legendline{blue}{$\Vert A^{(1)}_{\text{sym}}\Vert_F$}}
            \put(33, 40){\legenddashed{orange}{$\Vert A^{(1)}_{\text{alt}}\Vert_F$}}

            \put(50, 40){\small Frobenius norm} 
            \put(70, -2){\small Time} 
            \put(63, 96){\small \textbf{Triadic Layer (Triangles)}}

            \put(66, 40){\legendline{blue}{$\Vert A^{(2)}_{\text{sym}}\Vert_F$}}
            \put(78, 40){\legenddashed{orange}{$\Vert A^{(2)}_{\text{alt}}\Vert_F$}}
            \put(90, 40){\legenddotted{green}{$\Vert A^{(2)}_{\text{mix}}\Vert_F$}}
            
        \end{overpic}
        
        \vspace{1em}
        \caption{Time series of the Frobenius norms for the isotypic components of the binary tensor $A^{(1)}$ (left) and the triadic tensor $A^{(2)}$ (right). The norms of the antisymmetric part ($\Vert A^{(1)}_{\text{alt}}\Vert_F$) and the antisymmetric and mixed parts of the triad ($\Vert A^{(2)}_{\text{alt}}\Vert_F$, $\Vert A^{(2)}_{\text{mix}}\Vert_F$) decay exponentially to zero, confirming convergence to the symmetric regime.}
        \label{fig:persistent_convergence_plots} 
    \end{subfigure}
    
    \vspace{2em}

    \begin{subfigure}[b]{\textwidth}
        \centering
        \begin{overpic}[width=0.9\textwidth] 
            {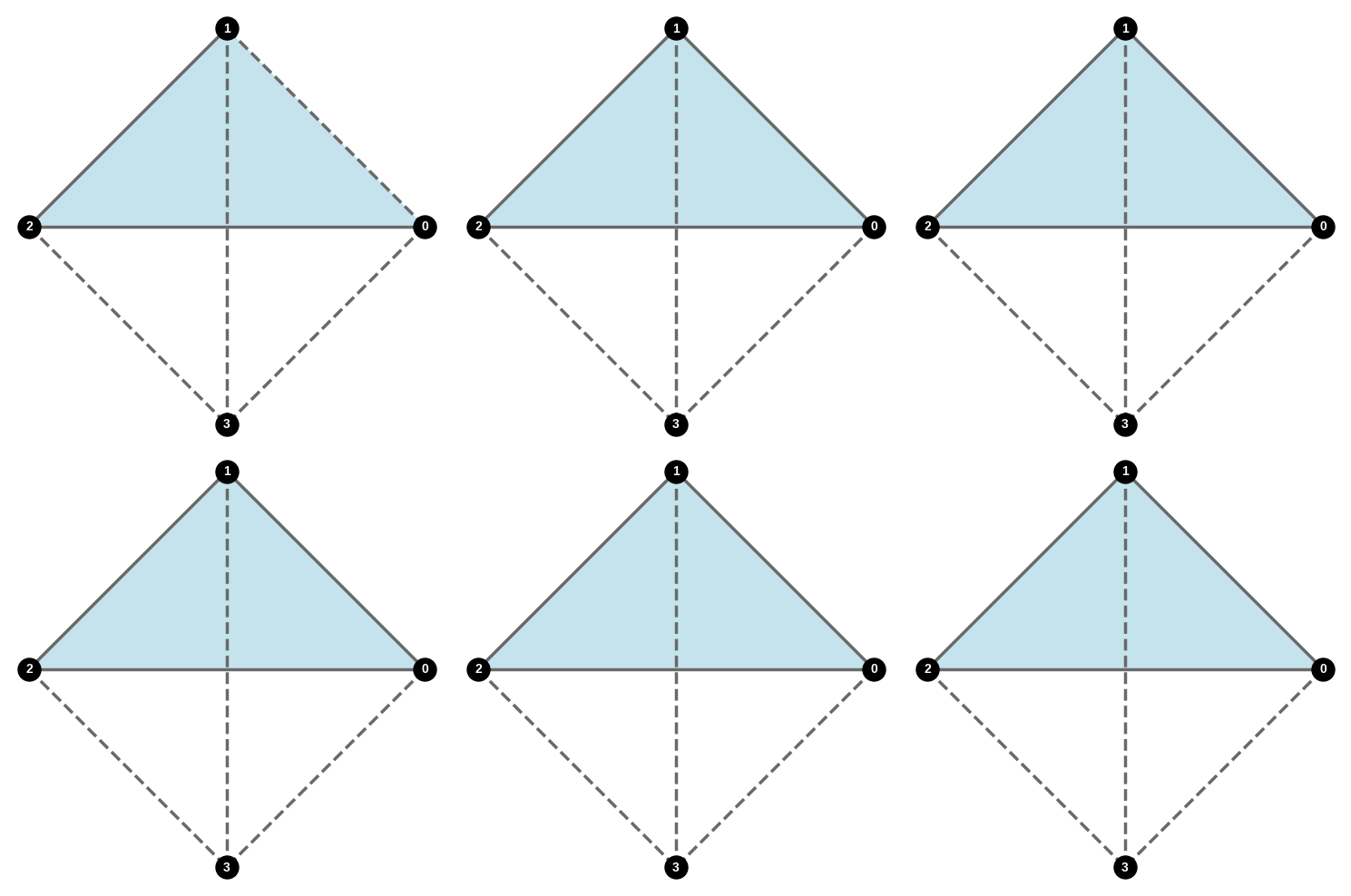}

            \put(2, 38){\small t = 0.00}
            \put(35, 38){\small t = 4.92}
            \put(68, 38){\small t = 9.84}
            
            \put(2, 5){\small t = 14.96}
            \put(35, 5){\small t = 19.98}
            \put(68, 5){\small t = 25.00}

            \put(18, -3){
                \small
                \definecolor{strongtriad}{rgb}{0.67, 0.84, 0.9} 
                
                \color{dimgray}\rule[0.5ex]{6mm}{1.5pt}\color{black} Strong Edge ($>$ 0.5) \quad
                \color{dimgray}\rule[0.5ex]{2.5mm}{1.5pt} \rule[0.5ex]{1mm}{0mm} \rule[0.5ex]{2.5mm}{1.5pt}\color{black} Weak Edge ($<$ 0.5) \quad
                \color{strongtriad}\rule[0.2ex]{5mm}{3mm} \color{black} Strong Triad ($>$ 0.5)
            }
        \end{overpic}
        
        \vspace{2em} 
        
        \caption{Snapshots of the network structure at uniformly spaced times. The system is initialised with a simplicial violation on $(0,1,2)$ (strong triad, weak edge $(0,1)$). The adaptive dynamics reinforce the weak edge via the closure term, restoring downward closure. A nontrivial persistent triad is observed.}
        \label{fig:persistent_simplicial_evolution} 
    \end{subfigure}

    \caption[Simulation of simplicial emergence with persistence.]{Simulation of simplicial emergence with persistence. (a) Frobenius norms converging to the symmetric regime. (b) Network snapshots showing restoration of downward closure.}
    \label{fig:persistent_simplicial_plots} 
\end{figure}

\end{example}

\section{Oriented Simplicial Complexes and Semi-Simplicial Sets}

\subsection{Antisymmetric Regime}
We now adapt the $\delta$–threshold philosophy of the previous section to the antisymmetric regime, where every binary interaction is oriented and every triadic interaction is alternating.
Suppose our hypergraph $\mathcal{H}(t)$ converges to the antisymmetric regime. Thus, for all $\varepsilon > 0$ there exists a time $t_0$ such that for all $t \geq t_0$ we have that all the symmetric and mixed parts decay to zero, and both $|A^{(1)}_{ij}(t) + A^{(1)}_{ji}(t)| < \varepsilon$ and $|A^{(2)}_{ijk}(t) - \text{sgn}(\sigma)A^{(1)}_{\sigma(i)\sigma(j)\sigma(k)}(t)| < \varepsilon$ for all $\sigma \in S_3$. In such a case, we create equivalence classes and view $A^{(1)}_{ij}$ and $A^{(1)}_{ji}$ as the same object, but differing by a sign,  i.e. for all $t \geq t_0$, $A^{(1)}_{ij} = - A^{(1)}_{ji}.$ Similarly, $A^{(2)}_{ijk}$ will be the representative for all even permutations of its indices, and it is the negative of any permutation by any odd permutation.   \\

\begin{definition}\label{def: oriented downward closure}
    We say an oriented triadic connection strength $A^{(2)}_{ijk}$ respects the oriented downward closure property with parameter $\delta > 0$ if and only if  
    \begin{itemize}
        \item $|A^{(2)}_{ijk}| \geq \delta \implies |A^{(1)}_{ij}|  \geq \delta, |A^{(1)}_{ik}| \geq \delta, |A^{(1)}_{jk}| \geq \delta.$
        \item The orientation of the triadic and binary connections agree, i.e.  \begin{equation*}\label{equation: orientation matching equation}
            \text{sgn}\left( A^{(2)}_{ijk} \right) =\text{sgn}\left(A^{(1)}_{ij}\right) = \text{sgn}\left(A^{(1)}_{jk}\right) = \text{sgn}\left(A^{(1)}_{ik}\right).
        \end{equation*}
    \end{itemize}
\end{definition}
We say an oriented $2$-simplex is present on the nodes $i,j$ and $k$ with parameter $\delta$ if $|A^{(2)}_{ijk}| \geq \delta$ and $A^{(2)}_{ijk}$ respects the oriented downward closure property. Similar to the unoriented case, we say an oriented $1-$simplex is present on the nodes $i$ and $j$, with parameter $\delta,$ if $|A^{(1)}_{ij}| \geq \delta$ and $\text{sgn}(A^{(1)}_{ij}) = -\text{sgn}(A^{(1)}_{ji}).$ The vertices are assumed vacuously present, thus the oriented $1$-simplices respect the downward closure property automatically. 
\tikzset{
  mid arrow/.style={
    postaction={decorate},
    decoration={
      markings,
      mark=at position 0.5 with {\arrow{>}}
    }
  }
}

\begin{minipage}{0.45\textwidth}
  \centering
  \begin{tikzpicture}[scale=5,>=latex,line width=1pt]
    \coordinate (A) at (0,0);
    \coordinate (B) at (1,0);
    \coordinate (C) at (0.5,{sqrt(3)/2});
    \coordinate (M) at ($(A)!0.5!(B)$);
    \coordinate (G) at ($(C)!0.6667!(M)$);
    \fill[gray!20] (A) -- (B) -- (C) -- cycle;
    \draw[blue,mid arrow] (A) -- (B) node[midway,below] {$[i,j]$};
    \draw[blue,mid arrow] (B) -- (C) node[midway,right] {$[j,k]$};
    \draw[blue,mid arrow] (C) -- (A) node[midway,left]  {$[k,i]$};
    \fill[black] (A) circle (0.8pt) node[below left]  {$i$};
    \fill[black] (B) circle (0.8pt) node[below right] {$j$};
    \fill[black] (C) circle (0.8pt) node[above,yshift=1pt] {$k$};
    \draw[blue,->]
      (G)+(45:0.15)
        arc[start angle=45,end angle=315,radius=0.15];
    \node[blue] at (G) {$[i,j,k]$};
  \end{tikzpicture}
  
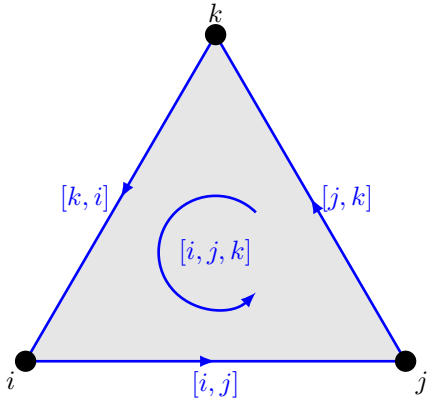
\captionof{figure}{Positive orientation}
\end{minipage}
\hfill
\begin{minipage}{0.45\textwidth}
  \centering
  \begin{tikzpicture}[scale=5,>=latex,line width=1pt]
    \coordinate (A) at (0,0);
    \coordinate (B) at (1,0);
    \coordinate (C) at (0.5,{sqrt(3)/2});
    \coordinate (M) at ($(A)!0.5!(B)$);
    \coordinate (G) at ($(C)!0.6667!(M)$);
    \fill[gray!20] (A) -- (B) -- (C) -- cycle;
    \draw[blue,mid arrow] (B) -- (A) node[midway,below] {$[j,i]$};
    \draw[blue,mid arrow] (C) -- (B) node[midway,right] {$[k,j]$};
    \draw[blue,mid arrow] (A) -- (C) node[midway,left]  {$[i,k]$};
    \fill[black] (A) circle (0.8pt) node[below left]  {$i$};
    \fill[black] (B) circle (0.8pt) node[below right] {$j$};
    \fill[black] (C) circle (0.8pt) node[above,yshift=1pt] {$k$};
    \draw[blue,->]
      (G)+(45:0.15)
        arc[start angle=45,delta angle=-270,radius=0.15];
    \node[blue] at (G) {$[i,k,j]$};
  \end{tikzpicture}
  
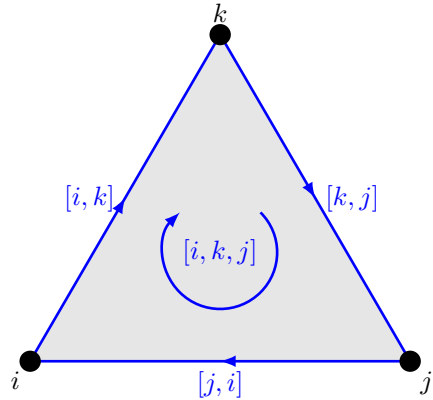
\captionof{figure}{Negative orientation}
\end{minipage}
\vspace{0.5cm}

\begin{definition}
Given an adaptive, triadic, network dynamical system  $\mathcal{H}(t)$, encoded by a matrix $A^{(1)} \in M_N(\mathbb{R})$ and rank-$3$ tensor $A^{(2)} \in \left( \mathbb{R}^N \right)^{\otimes 3}$,  that converges to the antisymmetric regime, we say $\mathcal{H}(t)$ is an oriented simplicial complex with parameter $\delta > 0$, if for all $i,j,k \in \{ 1 , \dots , N\}$, the oriented downward closure property \ref{def: oriented downward closure} is not violated. \\
\end{definition}

Now we want to replicate our results from the symmetric hypergraph case to this oriented case. We begin with the following definition. \\

\begin{definition}[Oriented simplicial region at threshold $\delta$]
For a triple $(i,j,k)$ write $\sigma_{ijk}:=\text{sgn}\!\left(A^{(2)}_{ijk}\right)\in\{-1,0,1\}$ and define the
oriented edge coordinates as
$$
y_{ij}:=\sigma_{ijk}\,A^{(1)}_{ij},\qquad
y_{ik}:=\sigma_{ijk}\,A^{(1)}_{ik},\qquad
y_{jk}:=\sigma_{ijk}\,A^{(1)}_{jk}.
$$
The oriented bad set, i.e. the set that violates the oriented downward closure, is then given by 
$$
M^{2,\delta,\mathrm{or}}_{ijk}
:=\Big\{ (A^{(2)}_{ijk},y_{ij},y_{ik},y_{jk}) \in \mathbb{R}^4 :  |A^{(2)}_{ijk}|\geq\delta\ \text{ and }\ \min(y_{ij},y_{ik},y_{jk})<\delta\Big\}\subseteq\mathbb{R}^4.
$$

Set $K^{\mathrm{or}}_{ijk}:=\mathbb{R}^4\setminus M^{2,\delta,\mathrm{or}}_{ijk}$. The global oriented simplicial region is then given by 
$$
\Omega^{\mathrm{or}}_\delta
:=\prod_{i<j<k} K^{\mathrm{or}}_{ijk}
\ \subseteq   \mathbb{R}^{4\binom{N}{3}}.
$$
\end{definition}

\begin{remark}
The use of the signed coordinates $y_{ab}$ in the definition is $M^{2,\delta,\mathrm{or}}_{ijk}$ captures the orientations of the edges in relation to the triadic connection. If an edge is large in magnitude but has the opposite orientation to the triad, then $y_{ab}$ is negative and hence fails the threshold test. Thus, the single inequality $y_{ab}\geq \delta$ simultaneously enforces magnitude and orientation agreement.\\
Furthermore, the ambient space remains $\mathbb{R}^{4\binom{N}{3}}$ as in the symmetric case, since each $K^{\mathrm{or}}_{ijk}$ contains a $4$-tuple, and there are $\binom{N}{3}$ such sets. 
Because we use the signed coordinates $y_{ij},y_{ik},y_{jk}$, odd permutations of $(i,j,k)$ simply flip all three signs simultaneously, so they do not create extra dimensions. 
Hence a single $K^{\mathrm{or}}_{ijk}$ already captures both orientations of the simplex. 
\end{remark}

Now, in a similar fashion to the symmetric case, one can compute the boundary $\partial M^{2,\delta,\mathrm{or}}_{ijk}$ as the union of the following four almost-everywhere disjoint sets, 

\begin{align*}
X_1 &= \{\,|A^{(2)}_{ijk}|=\delta,\ \min(y_{ij},y_{ik},y_{jk})\leq \delta\,\}, \\
X_2 &= \{\,|A^{(2)}_{ijk}|\geq \delta,\ y_{ij}=\delta,\ y_{ik}\geq \delta,\ y_{jk}\geq \delta\,\}, \\
X_3 &= \{\,|A^{(2)}_{ijk}|\geq \delta,\ y_{ik}=\delta,\ y_{ij}\geq \delta,\ y_{jk}\geq \delta\,\},\\
X_4 &= \{\,|A^{(2)}_{ijk}|\geq \delta,\ y_{jk}=\delta,\ y_{ij}\geq \delta,\ y_{ik}\geq \delta\,\}. 
\end{align*}
From this, we can make the following definition of outward pointing.\\

\begin{definition}[Outward Pointing at Threshold $\delta$ (Oriented Case)]\label{def: outward pointing antisymmetric}
Fix a triple $(i,j,k)$ and write $u=(A^{(2)}_{ijk},y_{ij},y_{ik},y_{jk})$ with $y_{ab}:=\sigma_{ijk}A^{(1)}_{ab}$, $\sigma_{ijk}=\text{sgn}(A^{(2)}_{ijk})$.
Let $G^{(ijk)}(u)$ denote the dynamics in these coordinates. 
Set the barrier functions
$$
b_1:=\delta-|A^{(2)}_{ijk}|,\qquad b_2:=y_{ij}-\delta,\qquad b_3:=y_{ik}-\delta,\qquad b_4:=y_{jk}-\delta.
$$
We say the dynamics are \emph{outward--pointing (oriented) at threshold $\delta$} if for every boundary point $u^\ast$ of
$$
M^{2,\delta,\mathrm{or}}_{ijk}=\{\,|A^{(2)}_{ijk}|\geq \delta\ \text{ and }\ \min(y_{ij},y_{ik},y_{jk})<\delta\,\}
$$
and for every active constraint $b_\ell(u^\ast)=0$, one has
$$
\dot b_\ell(u^\ast)=\nabla b_\ell(u^\ast)\cdot G^{(ijk)}(u^\ast)\geq 0.
$$
Equivalently, at regular boundary points, this is the list of inequalities
$$
\text{on }X_1:\ \text{sgn}(A^{(2)}_{ijk})\,\dot A^{(2)}_{ijk}\leq 0,\quad
\text{on }X_2:\ \dot y_{ij}\geq 0,\quad
\text{on }X_3:\ \dot y_{ik}\geq 0,\quad
\text{on }X_4:\ \dot y_{jk}\geq 0.
$$

At singular points (several $b_\ell=0$) the condition must hold for all outward normals in the positive cone generated by the corresponding $\nabla b_\ell$.
\end{definition}

This formulation now allows us to state the main simplicial retention theorem in the oriented case. \\

\begin{theorem}[Retention of oriented simplicial structure]\label{thm: oriented retention theorem}
Fix a threshold parameter $\delta>0$. Assume $\mathcal{H}(t)$ is an adaptive triadic network dynamical system that converges to the antisymmetric regime. Suppose the vector field $(\textbf{x},A^{(1)},A^{(2)})$ is $C^1$ on $\mathbb{R}^{N+N^2+N^3}\setminus\mathcal{H}_0$, 
    (where  $\mathcal{H}_0$ is the union of vanishing hyperplanes as in Theorem \ref{thm:retention}), and  suppose the dynamics are outward--pointing (oriented) at threshold $\delta$ for every triple $(i,j,k)$, as in Definition \ref{def: outward pointing antisymmetric}. 
Let
$$
K^{\mathrm{or}}_{ijk}:=\{\,|A^{(2)}_{ijk}|<\delta\ \text{ or }\ y_{ij},y_{ik},y_{jk}\geq \delta\,\},\qquad
\Omega^{\mathrm{or}}_\delta:=\prod_{i<j<k}K^{\mathrm{or}}_{ijk}\subset\mathbb{R}^{4\binom{N}{3}}.
$$
If for some $t_1\geq t_0$ we have $(A^{(1)}(t_1),A^{(2)}(t_1))\in\Omega^{\mathrm{or}}_\delta$, then
$$
(A^{(1)}(t),A^{(2)}(t))\in\Omega^{\mathrm{or}}_\delta\quad\text{for all }t\geq t_1.
$$
\end{theorem}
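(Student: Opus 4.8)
The plan is to run the same first–exit–time argument that proves Theorem~\ref{thm:retention}, now carried out in the signed coordinates $u=(A^{(2)}_{ijk},y_{ij},y_{ik},y_{jk})$ with $y_{ab}=\sigma_{ijk}A^{(1)}_{ab}$. Assume $(A^{(1)}(t_1),A^{(2)}(t_1))\in\Omega^{\mathrm{or}}_\delta$ and suppose, for contradiction, that there is $T>t_1$ with $(A^{(1)}(T),A^{(2)}(T))\notin\Omega^{\mathrm{or}}_\delta$. Set $\tau:=\inf\{t\geq t_1 : (A^{(1)}(t),A^{(2)}(t))\notin\Omega^{\mathrm{or}}_\delta\}$. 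By continuity of the flow there is a triple $(i,j,k)$ such that the corresponding $4$–tuple lies in $K^{\mathrm{or}}_{ijk}$ for $t<\tau$ and on $\partial M^{2,\delta,\mathrm{or}}_{ijk}$ at $t=\tau$; call this boundary point $u^\ast$.

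The one genuinely new point compared with the symmetric case is that the coordinate change $(A^{(2)}_{ijk},A^{(1)}_{ij},A^{(1)}_{ik},A^{(1)}_{jk})\mapsto u$ involves $\sigma_{ijk}=\text{sgn}(A^{(2)}_{ijk})$, which jumps at $A^{(2)}_{ijk}=0$. I would dispose of this immediately by observing that every point of $\partial M^{2,\delta,\mathrm{or}}_{ijk}$ satisfies $|A^{(2)}_{ijk}|\geq\delta>0$, so near $\tau$ the sign $\sigma_{ijk}$ is locally constant, the coordinate map is a fixed linear isomorphism there, $\dot y_{ab}=\sigma_{ijk}\dot A^{(1)}_{ab}$, and the dynamics $G^{(ijk)}$ in the $u$–coordinates of Definition~\ref{def: outward pointing antisymmetric} inherit the $C^1$ regularity of the original vector field off $\mathcal{H}_0$. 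In particular the barrier functions $b_1,\dots,b_4$ are smooth near $u^\ast$ (on $X_1$ only $\dot A^{(2)}_{ijk}$ enters, and $A^{(2)}_{ijk}=\pm\delta\neq0$ there), so their Lie derivatives along the flow are well defined.

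The remainder is a verbatim transcription of the topological step in the proof of Theorem~\ref{thm:retention}. Since $u^\ast$ lies on one of the faces $X_1,X_2,X_3,X_4$, at least one barrier constraint $b_\ell(u^\ast)=0$ is active; crossing from $K^{\mathrm{or}}_{ijk}$ into $M^{2,\delta,\mathrm{or}}_{ijk}$ at time $\tau$ forces some active barrier to be strictly decreasing, i.e. $\nabla b_\ell(u^\ast)\cdot G^{(ijk)}(u^\ast)<0$, equivalently $n\cdot G^{(ijk)}(u^\ast)<0$ for some $n$ in the positive cone generated by the active gradients $\nabla b_\ell$. This contradicts the outward–pointing (oriented) hypothesis, which gives $\dot b_\ell(u^\ast)\geq0$ for every active $\ell$ and, at singular points where several $b_\ell$ vanish simultaneously, $n\cdot G^{(ijk)}(u^\ast)\geq0$ for every $n$ in that normal cone — exactly the singular–point clause of Definition~\ref{def: outward pointing antisymmetric}. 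Hence no exit is possible at any triple, so $(A^{(1)}(t),A^{(2)}(t))\in\Omega^{\mathrm{or}}_\delta$ for all $t\geq t_1$.

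I expect the main (and essentially only) obstacle to be the regularity bookkeeping around the signed coordinates, and as indicated it is resolved entirely by the strict lower bound $|A^{(2)}_{ijk}|\geq\delta$ holding on $\partial M^{2,\delta,\mathrm{or}}_{ijk}$; everything else is the same exit–time contradiction as in the symmetric regime. It would be worth stating explicitly in the write-up that, because the sign is locally constant on the boundary, the oriented faces $X_1,\dots,X_4$ and their outward normals transform to the natural normals in the $u$–coordinates, so Definition~\ref{def: outward pointing antisymmetric} is precisely the hypothesis needed to close the argument.
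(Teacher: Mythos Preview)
Your proposal is correct and follows essentially the same first-exit-time argument as the paper: define $\tau$, locate a triple where $u(\tau)\in\partial M^{2,\delta,\mathrm{or}}_{ijk}$, identify the active barrier set, and contradict the outward-pointing hypothesis. Your additional paragraph handling the regularity of the signed coordinate change (noting that $|A^{(2)}_{ijk}|\geq\delta$ on the boundary forces $\sigma_{ijk}$ to be locally constant) is a welcome clarification that the paper's proof leaves implicit.
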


\begin{proof}
Let 
$$
\tau:=\inf\{t\geq t_1:\ (A^{(1)}(t),A^{(2)}(t))\notin\Omega^{\mathrm{or}}_\delta\}.
$$
If $\tau<\infty$, then for some triple $(i,j,k)$ the quadruple $u(t)=(A^{(2)}_{ijk}(t),y_{ij}(t),y_{ik}(t),y_{jk}(t))$ satisfies $u(t)\in K^{\mathrm{or}}_{ijk}$ for $t<\tau$ and $u(\tau)\in\partial M^{2,\delta,\mathrm{or}}_{ijk}$.
Let $I:=\{\ell\in\{1,2,3,4\}: b_\ell(u(\tau))=0\}$ be the active set. By continuity and minimality of $\tau$, there exists $\eta>0$ such that $b_\ell(u(t))>0$ for all $\ell\in I$ and all $t\in(\tau-\eta,\tau)$. To enter $M^{2,\delta,\mathrm{or}}_{ijk}$ at $\tau$ one must have $\dot b_\ell(u(\tau))\leq 0$ for some $\ell\in I$. But the outward--pointing condition yields $\dot b_\ell(u(\tau))\geq 0$ for every $\ell\in I$, a contradiction.
\end{proof}

\subsection{Mixed Regime}

In the asymptotic regime classification, we can study the emergence of either unoriented or oriented simplicial complexes when our hypernetwork converges to either the symmetric or antisymmetric limit. If it converges, however, to the mixed regime limit, we lack the essential properties of symmetry to study the emergence of either type of simplicial complex. Nonetheless, there are other algebraic-topological tools one can use, if instead we shift our attention to the emergence of semi-simplicial sets (also known as $\Delta-$sets), which are the minimal categorical objects that capture these higher-order interactions and maintain techniques and tools from homology theory and computational topology.
We first discuss some category-theoretic preliminaries and show why simplicial sets are too specific to encounter problems in this setting. We are forced to preclude part of their definition, leaving semi-simplicial sets as the most suitable object in the mixed regime. \\

\begin{definition}\label{def: simplicial set (contravariant functor)}
    Let $\Delta$ be the category whose objects are finite, non-empty, totally ordered sets 
    $$[n] = \{ 0, 1, \dots , n\},$$
    for $ n \geq 0$, 
    and morphisms are the order preserving functions. that is, if $i<j$ in $[n]$, then $f(i)<f(j)$ in $[m]$ for any morphism $f\colon[n]\to[m]$. Then a \emph{simplicial set} is a contravariant functor from $\Delta$ to $\mathsf{Set}, $ ( where $\mathsf{Set}$ denotes the category of sets). More generally, for any category $\mathsf{C},$ a \emph{simplicial object} in $\mathsf{C}$ is a functor $X : \Delta^{\text{op}} \rightarrow \mathsf{C}.$
\end{definition}

We can encode the combinatorial data  of a simplicial set in a more verifiable way by leveraging the fact that the category $\Delta$ has a natural generating set of morphisms. For each $n \in \mathbb{N}_{0}$, there are $n + 1$ injections $\delta^i : [n-1] \rightarrow [n]$ called the coface maps, and $n + 1$ surjections $ \sigma^j : [ n + 1 ] \rightarrow [ n ],$ called the codegeneracies. Explicitly, these are given by

$$\delta^i(j) = \begin{cases}
    j & \text{ if } j < i \\
    j + 1 & \text{ if } j \geq i 
\end{cases} \, \, \,  \text{    and    } \, \, \,   \sigma^j(i)=
  \begin{cases}
    i, & i\le j,\\
    i-1, & i>j.
  \end{cases}$$
One sees that  $\delta^i$ is the unique, order-preserving, injective map that omits $i$, and that $\sigma^i$ is the unique, surjective, order preserving map that hits $i$ twice. Now, these two maps satisfy the following relations, called the cosimplicial relations.
\begin{align}\label{equations: cosimplicial relations}
\delta^j \circ \delta^i &= \delta^i \circ \delta^{j-1} \quad && \text{for } i < j \\
\sigma^j \circ \sigma^i &= \sigma^i \circ \sigma^{j+1} \quad && \text{for } i \le j \\
\sigma^j \circ \delta^i &= 
\begin{cases}
    \delta^i \circ \sigma^{j-1} & \text{if } i < j \\
    \mathrm{id} & \text{if } i=j \text{ or } i=j+1 \\
    \delta^{i-1} \circ \sigma^j & \text{if } i > j+1
\end{cases}
\end{align}

One can verify that every morphism of $\Delta$ can be expressed as a composition of the coface and codegeneracy maps. If $X$ is a simplicial set, we write $d_i := X(\delta^i)$ and $s_i := X(\sigma^i),$ and call these the face and degeneracy maps respectively. Due to $X$ being contravariant, these maps then satisfy dual relations to the cosimplicial relations in \ref{equations: cosimplicial relations}, called the \emph{simplicial relations}. We can consequently give an alternate definition of a simplicial set as follows. \\

\begin{definition}\label{def: simplicial set (sequences of sets definition)}
    A \emph{simplicial set} $X$ is a collection of sets $X_n$ for each $n \in \mathbb{N}_0$ with functions $d_i : X_n \rightarrow X_{n-1}$ and $s_i : X_n \rightarrow X_{n+1}$ for all $0 \leq i \leq n$ and for each $n$ satisfying the following relations, 
    $$\begin{aligned}
d_i d_j &= d_{j-1} d_i, && i<j,\\
s_i s_j &= s_{j+1} s_i, && i\le j,\\[6pt]
d_i s_j &=
  \begin{cases}
      1, & i=j \text{ or } i=j+1,\\
      s_{j-1} d_i, & i<j,\\
      s_j d_{i-1}, & i>j+1.
  \end{cases}
\end{aligned}$$
\end{definition}

Given an adaptive triadic system $(\mathbf{x},A^{(1)}, A^{(2)})$, for each $n \in \mathbb{N}_0$, we define sets $X_n$ as follows, 
\begin{equation}\label{eq:X_definitions}
\begin{aligned}
X_0 &:= X(\{0\}) = \{1, \dots, N\},\\
X_1 &:= X(\{0,1\}) = \left\{ (i,j) \in [N]^2 : \left|A^{(1)}_{ij}\right| \geq \delta \right\},\\
X_2 &:= X(\{0,1,2\}) = \left\{ (i,j,k) \in [N]^3 : \left|A^{(2)}_{ijk}\right| \geq \delta \right\}.
\end{aligned}
\end{equation}

and $X_n := X([n]) = \emptyset$ for all $n > 2.$ We now see that if we attempt to define degeneracy maps for all $n \geq 0$, we run into a problem when $n = 3$, because unless $X_2$ is empty, which it need not necessarily be, there is no map  $X_2 \rightarrow \emptyset $.  Consequently, the degeneracy maps of a simplicial set cause problems and do not allow us to model a simplicial structure emerging on the hypergraph. If we therefore remove the degeneracy maps, we are instead left with an object known as a semi-simplicial set, or $\Delta-$set.\\

\begin{definition}[Semi-simplicial set or $\Delta$–set]\label{def:DeltaSet}
A \emph{semi-simplicial set}, or \emph{$\Delta$–set} $X$ is a collection of sets $X_n$ for each integer $n\ge 0$
together with \emph{face maps}
$$
d_i\colon X_n \longrightarrow X_{n-1},\qquad 0\leq i\leq n,
$$
such that for every $n \geq 1$ they satisfy the simplicial relations
$$
d_i d_j \;=\; d_{j-1} d_i ,\qquad i<j\le n.
$$

(Equivalently, $X$ is a contravariant functor $X : \Delta_{ \mathsf{inj}}^{\mathrm{op}}\rightarrow\mathsf{Set}$,
where $\Delta_{\mathsf{inj}}\subseteq\Delta$ is the sub-category whose morphisms are the
\emph{injective} order–preserving maps.)\\
\end{definition}

\begin{remark}\leavevmode
\begin{enumerate}
\item
Compared with \ref{def: simplicial set (sequences of sets definition)}, a simplicial set has both face maps $d_i$
and degeneracy maps $s_i$, subject to a larger family of identities.
A semi-simplicial set is therefore a strictly weaker or more general object:
every simplicial set becomes a $\Delta$–set by forgetting the $s_i$,
but not every $\Delta$–set admits degeneracies that would upgrade it to a simplicial set.

\item For most purposes in homology or persistent-homology, one loses no essential
information by dropping the degeneracies, since chain complexes only use face maps.

\end{enumerate}
\end{remark}

The following theorem shows that our  hypergraph is a semi-simplicial set if it satisfies the downward facing closure condition.\\

\begin{theorem}
    Let $\delta > 0,$ and suppose $\mathcal{H}(t)$ is a triadic, adaptive network dynamical system encoded by the weighted adjacency matrices $A^{(1)}$ and $A^{(2)}$. Define sets $X_n$ as in \eqref{eq:X_definitions}.

Define face maps by 
$$d_0,d_1 : X_1 \rightarrow X_0, \, \, 
d_0(i,j)=j, \, \,  d_1(i,j)=i, $$
$$d_0,d_1,d_2 : X_2 \rightarrow X_1,
d_0(i,j,k)=(j,k),\;
d_1(i,j,k)=(i,k),\;
d_2(i,j,k)=(i,j).$$

For $n\ge 3$ put $d_i=\varnothing:\varnothing\to\varnothing$.  Then the sets  and face maps $\left\{ X_n , d_i \right\}$ form a semi-simplicial set (at time t, and with parameter $\delta > 0$), if and only if, for all $i,j,k \in  \{ 1 , \dots , N\}$, 

$$
|A^{(2)}_{ijk}| \geq \delta \Longrightarrow \, \, 
|A^{(1)}_{ij}|,
\,|A^{(1)}_{ik}|,
\,|A^{(1)}_{jk}|
\, \geq\ \, \delta.
$$
\end{theorem}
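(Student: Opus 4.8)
The plan is to prove the two implications separately, after isolating the single point on which the whole equivalence turns: the maps $d_0,d_1,d_2\colon X_2\to X_1$ are genuine functions \emph{into} $X_1$ precisely when downward closure holds, while every other requirement in Definition~\ref{def:DeltaSet} is either automatic or a finite combinatorial check independent of the weights.

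For the direction ($\Leftarrow$), assume that $|A^{(2)}_{ijk}|\ge\delta$ implies $|A^{(1)}_{ij}|,|A^{(1)}_{ik}|,|A^{(1)}_{jk}|\ge\delta$. First I would verify well-definedness of the face maps level by level. On $X_1$ the maps $d_0(i,j)=j$ and $d_1(i,j)=i$ always land in $X_0=\{1,\dots,N\}$, so they are well-defined with no hypothesis. On $X_2$, for $(i,j,k)\in X_2$ --- i.e.\ $|A^{(2)}_{ijk}|\ge\delta$ --- the downward-closure hypothesis gives $|A^{(1)}_{jk}|,|A^{(1)}_{ik}|,|A^{(1)}_{ij}|\ge\delta$, so $d_0(i,j,k)=(j,k)$, $d_1(i,j,k)=(i,k)$ and $d_2(i,j,k)=(i,j)$ all lie in $X_1$; hence these are well-defined functions $X_2\to X_1$. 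For $n\ge3$ the domain $X_n=\varnothing$, so each $d_i$ is the unique empty function and well-definedness is vacuous. Next I would check the simplicial identities $d_id_j=d_{j-1}d_i$ for $i<j$: for $n\ge3$ both sides are the empty function on an empty domain, so the identities hold trivially, and the only substantive case is the composite $X_2\to X_1\to X_0$, where the three instances $(i,j)\in\{(0,1),(0,2),(1,2)\}$ each reduce by direct substitution to the elementary fact that deleting two of the three coordinates of a triple gives a result independent of the deletion order, with the index shift in $d_{j-1}$ accounting for the relabelling. Having established well-definedness and the identities, Definition~\ref{def:DeltaSet} is satisfied and $\{X_n,d_i\}$ is a semi-simplicial set.

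For the direction ($\Rightarrow$), suppose $\{X_n,d_i\}$ is a semi-simplicial set. Then in particular $d_0,d_1,d_2$ are functions $X_2\to X_1$. Fix $i,j,k\in\{1,\dots,N\}$ with $|A^{(2)}_{ijk}|\ge\delta$, so that $(i,j,k)\in X_2$ by \eqref{eq:X_definitions}. Then $d_0(i,j,k)=(j,k)$, $d_1(i,j,k)=(i,k)$ and $d_2(i,j,k)=(i,j)$ all belong to $X_1$, which by definition of $X_1$ means $|A^{(1)}_{jk}|,|A^{(1)}_{ik}|,|A^{(1)}_{ij}|\ge\delta$. Since $(i,j,k)$ was arbitrary subject to $|A^{(2)}_{ijk}|\ge\delta$, this is exactly the downward-closure condition, completing the equivalence.

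I expect no serious obstacle: the argument is essentially tautological once one observes that the face maps out of $X_2$ fail to take values in $X_1$ exactly along a violation of downward closure. The only points requiring care are the bookkeeping ones --- handling the empty levels $X_n$ for $n\ge3$ so that the simplicial identities hold vacuously there, and confirming that the face maps on $X_0$ and $X_1$ impose no additional constraint on the weights.
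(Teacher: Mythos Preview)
Your proposal is correct and follows essentially the same approach as the paper: both directions hinge on the observation that the face maps $d_0,d_1,d_2\colon X_2\to X_1$ are well-defined precisely when downward closure holds, with the simplicial identities on $X_2$ verified by direct substitution and everything in higher levels vacuous. If anything, your treatment is slightly more careful, since you explicitly note all three instances $(i,j)\in\{(0,1),(0,2),(1,2)\}$ of the identity $d_id_j=d_{j-1}d_i$ on $X_2$, whereas the paper checks only one.
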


\begin{proof}
($\Rightarrow$)  
 First, suppose that the sets and face maps derived from $\mathcal{H}$ forms a semi-simplicial set. It then follows that  the face maps are well defined, so $d_i(X_2)\subseteq X_1$ for $i=0,1,2$.
Take $(i,j,k)\in X_2$; then $|A^{(2)}_{ijk}|\ge\delta$.
Since $d_2(i,j,k)=(i,j)\in X_1$, we have $|A^{(1)}_{ij}|\ge\delta$,
and similarly for $(i,k)$ and $(j,k)$.  Hence, we have that $$
|A^{(2)}_{ijk}| \geq \delta \Longrightarrow \, \, 
|A^{(1)}_{ij}|,
\,|A^{(1)}_{ik}|,
\,|A^{(1)}_{jk}|
\, \geq\ \, \delta.
$$

($\Leftarrow$)  
Conversely, suppose we have that for all $i,j,k \in \{ 1, \dots , N\},$ we have that $
|A^{(2)}_{ijk}| \geq \delta $ implies that $ 
|A^{(1)}_{ij}|, \,|A^{(1)}_{ik}|, \,|A^{(1)}_{jk}| $ are all greater than or equal to $\delta.$ Then every boundary edge of a triangle in $X_2$ lies in $X_1$, so
$d_0,d_1,d_2$ land in $X_1$.  All higher $X_n$ are empty; thus, all other
face maps are the unique maps $\varnothing\to\varnothing$.
The only non-vacuous simplicial identity is $d_0d_1=d_0d_0$ on $X_2$,
and a direct check shows both sides send $(i,j,k)$ to $k$.
Therefore  the sets  and face maps $\left\{ X_n , d_i \right\}$ form a semi-simplicial set.
\end{proof}

Just as in symmetric and antisymmetric cases, we follow a similar procedure as before and produce the bad set where a simplicial complex structure breaks down. \\

\begin{definition}[Semi--simplicial region at threshold $\delta$]
For each ordered triple $(i,j,k)$ with $i,j,k$ pairwise distinct, define the bad set
$$
M^{2,\delta,\mathrm{ss}}_{ijk}
:=\Big\{(A^{(2)}_{ijk},A^{(1)}_{ij},A^{(1)}_{ik},A^{(1)}_{jk}) : |A^{(2)}_{ijk}|\geq \delta \ \text{ and }\ \min\big(A^{(1)}_{ij},A^{(1)}_{ik},A^{(1)}_{jk}\big)<\delta\Big\}\subseteq \mathbb{R}^4.
$$
  
Set
$$
K^{\mathrm{ss}}_{ijk}:=\mathbb{R}^4\setminus M^{2,\delta,\mathrm{ss}}_{ijk},
$$
and define the global semi-simplicial region by the product over all ordered triples without repetition
$$
\Omega^{\mathrm{ss}}_\delta
:=\prod_{(i,j,k)\in[N]_{i \neq j \neq k}^3} K^{\mathrm{ss}}_{ijk}
\ \subseteq\ \mathbb{R}^{\,4\,N(N-1)(N-2)} \;=\; \mathbb{R}^{\,4\,\frac{N!}{(N-3)!}}.
$$
\end{definition}

\begin{remark}
In the semi-simplicial setting there is no relationship between $A^{(2)}_{ijk}$ and any $S_3$ permutation of its indices, so each ordered triple
contributes an independent $4$--tuple. There are $N \cdot (N-1) \cdot (N-2)$ such tuples. Consequently, the ambient space is $\mathbb{R}^{\,4\,N(N-1)(N-2)}$.
\end{remark}

In a similar fashion to the symmetric and antisymmetric regimes from earlier, the boundary of $M^{2,\delta,\mathrm{ss}}_{ijk}$  decomposes (up to measure–zero overlaps) into four pieces $X_1,X_2, X_3, X_4$:
\begin{align*}
X_1 &:= \big\{|A^{(2)}_{ijk}|=\delta,\ \min\{|A^{(1)}_{ij}|,|A^{(1)}_{ik}|,|A^{(1)}_{jk}|\}\leq\delta\big\},\\
X_2 &:= \big\{|A^{(2)}_{ijk}|\geq\delta,\ |A^{(1)}_{ij}|=\delta,\ |A^{(1)}_{ik}|\geq\delta,\ |A^{(1)}_{jk}|\geq\delta\big\},\\
X_3 &:= \big\{|A^{(2)}_{ijk}|\geq\delta,\ |A^{(1)}_{ik}|=\delta,\ |A^{(1)}_{ij}|\geq\delta,\ |A^{(1)}_{jk}|\geq\delta\big\},\\
X_4 &:= \big\{|A^{(2)}_{ijk}|\geq\delta,\ |A^{(1)}_{jk}|=\delta,\ |A^{(1)}_{ij}|\geq\delta,\ |A^{(1)}_{ik}|\geq\delta\big\}.
\end{align*}
On the regular points of $X_\ell$, the outward unit normal $n : \partial M_{ijk}^{2,\delta,ss} \rightarrow \mathbb{R}^4,$ can be chosen as

$$n\bigl(A^{(2)}_{ijk},A^{(1)}_{ij},A^{(1)}_{ik},A^{(1)}_{jk}\bigr)  = \begin{cases} 
    (-\text{sgn}(A^{(2)}_{ijk}), 0, 0, 0) & \text{ on } X_1 \\ 
    \bigl(0, \text{sgn}(A^{(1)}_{ij}), 0, 0\bigr) & \text{ on } X_2\\
    \bigl(0, 0, \text{sgn}(A^{(1)}_{ik}), 0\bigr) & \text{ on } X_3\\
     \bigl(0, 0, 0, \text{sgn}(A^{(1)}_{jk})\bigr) & \text{ on } X_4
\end{cases}$$

At singular points $x^\ast\in X_p\cap X_q$ we take the outward normal cone
$$
N_{\partial M}(x^\ast):=\Big\{\,\sum_{r\in I(x^\ast)}\lambda_r\,n_r(x^\ast)\ \Big|\ \lambda_r\in\mathbb{R}_{\geq 0}\Big\},
\quad I(x^\ast):=\{\ell:\ x^\ast\in X_\ell\}.
$$

\begin{definition}[Outward–pointing at threshold $\delta$ for the mixed regime]\label{def: outward pointing semi simplicial}
Let $F^{(ijk)}:\mathbb{R}^4\to\mathbb{R}^4$ denote the vector field of the reduced $(i,j,k)$–subsystem
$$
F^{(ijk)}(x)=\big(\dot A^{(2)}_{ijk},\,\dot A^{(1)}_{ij},\,\dot A^{(1)}_{ik},\,\dot A^{(1)}_{jk}\big).
$$
We say the hypergraph is \emph{outward–pointing at threshold $\delta$} if for every triple $(i,j,k)$ and every $x\in\partial M^{2,\delta}_{ijk}$ we have
$$
n\cdot F^{(ijk)}(x)\ \geq\ 0 \quad \text{for all } n\in N_{\partial M}(x).
$$
Equivalently, it suffices to verify the following sign–derivative inequalities:
\begin{align*}
&\text{on }X_1:\qquad \text{sgn}(A^{(2)}_{ijk})\,\dot A^{(2)}_{ijk}\ \leq\ 0,\\
&\text{on }X_2:\qquad \text{sgn}(A^{(1)}_{ij})\,\dot A^{(1)}_{ij}\ \geq\ 0,\\
&\text{on }X_3:\qquad \text{sgn}(A^{(1)}_{ik})\,\dot A^{(1)}_{ik}\ \geq\ 0,\\
&\text{on }X_4:\qquad \text{sgn}(A^{(1)}_{jk})\,\dot A^{(1)}_{jk}\ \geq\ 0.
\end{align*}
\end{definition}

This allows us to state our semi-simplicial set retention theorem.\\

\begin{theorem}[Retention of Semi-Simplicial Structure]\label{thm: mixed regime retention main theorem}
Fix a simplicial tolerance parameter $\delta > 0$.  
Assume $H(t)$ is an adaptive triadic network dynamical system and consider the semi--simplicial regime with no symmetry identifications.  Suppose the vector field $(\textbf{x}, A^{(1)}, A^{(2)})$ is $C^1$ on $\mathbb{R}^{N+N^2+N^3}\setminus\mathcal{H}_0$, 
    (where  $\mathcal{H}_0$ is the union of vanishing hyperplanes as in Theorem \ref{thm:retention}), and suppose the system is outward--pointing at every ordered triple $(i,j,k)$ with threshold $\delta$ in the sense of Definition \ref{def: outward pointing semi simplicial}.  
If at some time $t_1 \geq 0$ the configuration $(A^{(1)}(t_1),A^{(2)}(t_1))$ lies in $\Omega^{\mathrm{ss}}_\delta$, then
$$
(A^{(1)}(t),A^{(2)}(t)) \in \Omega^{\mathrm{ss}}_\delta \quad \text{for all } t \geq t_1,
$$
i.e.\ once semi-simplicial at threshold $\delta$, the configuration remains semi-simplicial for all future time.
\end{theorem}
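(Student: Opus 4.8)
The plan is to mirror the first-exit-time argument already used in the proofs of Theorems~\ref{thm:retention} and~\ref{thm: oriented retention theorem}; the only structural change is that the index set now runs over ordered triples $(i,j,k)$ of pairwise distinct vertices rather than over unordered ones, and no symmetry identification is imposed. Since $\Omega^{\mathrm{ss}}_\delta=\prod_{(i,j,k)}K^{\mathrm{ss}}_{ijk}$ is a product, a configuration leaves $\Omega^{\mathrm{ss}}_\delta$ precisely when at least one reduced $4$-tuple $x^{(ijk)}=(A^{(2)}_{ijk},A^{(1)}_{ij},A^{(1)}_{ik},A^{(1)}_{jk})$ leaves its factor $K^{\mathrm{ss}}_{ijk}$, so it suffices to show that each $K^{\mathrm{ss}}_{ijk}$ is positively invariant under the induced flow on $\mathbb{R}^4$.

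First I would assume $(A^{(1)}(t_1),A^{(2)}(t_1))\in\Omega^{\mathrm{ss}}_\delta$ and suppose for contradiction that there is $T>t_1$ with $(A^{(1)}(T),A^{(2)}(T))\notin\Omega^{\mathrm{ss}}_\delta$. Define the first exit time $\tau:=\inf\{t\geq t_1:(A^{(1)}(t),A^{(2)}(t))\notin\Omega^{\mathrm{ss}}_\delta\}$. Because the vector field is $C^1$ off $\mathcal{H}_0$ the trajectory is continuous, so $\tau<T$, and by minimality there is an ordered triple $(i,j,k)$ with $x^{(ijk)}(t)\in K^{\mathrm{ss}}_{ijk}$ for all $t\in[t_1,\tau)$ and $x^{(ijk)}(\tau)\in\partial M^{2,\delta,\mathrm{ss}}_{ijk}$. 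Next I would localise on the boundary: by the decomposition computed above, $x^{(ijk)}(\tau)$ lies in $X_1\cup X_2\cup X_3\cup X_4$; let $I\subseteq\{1,2,3,4\}$ be the set of active faces there and let $N_{\partial M}(x^{(ijk)}(\tau))$ be the corresponding outward normal cone. To actually cross into $M^{2,\delta,\mathrm{ss}}_{ijk}$ at time $\tau$, at least one active barrier constraint (namely $\delta-|A^{(2)}_{ijk}|$ on $X_1$, or $|A^{(1)}_{\,\cdot\,\cdot}|-\delta$ on $X_2,X_3,X_4$) must be non-increasing, which is equivalent to $n\cdot F^{(ijk)}(x^{(ijk)}(\tau))<0$ for some $n\in N_{\partial M}(x^{(ijk)}(\tau))$.

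But this is exactly what the outward-pointing hypothesis of Definition~\ref{def: outward pointing semi simplicial} forbids: it gives $n\cdot F^{(ijk)}(x)\geq 0$ for every boundary point $x$ and every $n$ in the normal cone, the equivalent sign-derivative inequalities covering the regular faces and, via non-negative combinations, the singular intersections $X_p\cap X_q$. This contradiction shows no such $T$ exists, hence $x^{(ijk)}(t)\in K^{\mathrm{ss}}_{ijk}$ for all $t\geq t_1$ and every ordered triple, so $(A^{(1)}(t),A^{(2)}(t))\in\Omega^{\mathrm{ss}}_\delta$ for all $t\geq t_1$.

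The only genuinely new bookkeeping compared with the symmetric case is verifying that discarding all symmetry identifications does not disturb the argument: each ordered triple is an independent subsystem, and since the adaptation laws $g^{(1)}_{ij}$ and $g^{(2)}_{ijk}$ depend only on the local data of the participating nodes, the reduced field $F^{(ijk)}$ is well-defined on $\mathbb{R}^4$ and the boundary decomposition $\partial M^{2,\delta,\mathrm{ss}}_{ijk}=\bigcup_\ell X_\ell$ holds verbatim. The one technical point I expect to require the most care is that the trajectory near the crossing time $\tau$ must stay off $\mathcal{H}_0$ so that the barrier functions are differentiable along the flow there; this is automatic, since on $X_1$ one has $|A^{(2)}_{ijk}|=\delta>0$ and on $X_2,X_3,X_4$ the tested edge has magnitude exactly $\delta>0$, so the Lie-derivative tests in Definition~\ref{def: outward pointing semi simplicial} are legitimate at the relevant boundary points — a remark worth stating explicitly rather than passing over.
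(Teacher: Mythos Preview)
Your proposal is correct and follows essentially the same first-exit-time argument as the paper's proof, which explicitly notes that it ``follows identically to the symmetric and oriented cases'' and derives the same contradiction from the outward-pointing condition. Your write-up is in fact more detailed than the paper's, and your remarks about locality of $F^{(ijk)}$ and about the trajectory staying off $\mathcal{H}_0$ near $\tau$ are useful elaborations that the paper leaves implicit.
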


\begin{proof}
The argument follows identically to the symmetric and oriented cases.  
Consider the first-exit time from $\Omega^{\mathrm{ss}}_{\delta} = \prod_{(i,j,k)} K^{\mathrm{ss}}_{ijk}$ into some bad set $M^{2,\delta,\mathrm{ss}}_{ijk} \subset \mathbb{R}^4$.  
As before, an exit at time $\tau$ would require the local vector field 
$$
F^{(ijk)} := \big( \dot{A}^{(2)}_{ijk},\, \dot{A}^{(1)}_{ij},\, \dot{A}^{(1)}_{ik},\, \dot{A}^{(1)}_{jk} \big)
$$
to point strictly inward, i.e. $n \cdot F^{(ijk)} < 0$ for some outward normal $n$ on $\partial M^{2,\delta,\mathrm{ss}}_{ijk}$.  
The outward-pointing condition,  however, precludes this, hence no exit is possible and $(A^{(1)}(t),A^{(2)}(t)) \in \Omega^{\mathrm{ss}}_{\delta}$ for all $t \geq t_1$.
\end{proof}

\section{Conclusion and Outlook}

We have developed a tensor based framework for adaptive higher–order network dynamics on directed hypergraphs, guided by a generalised Kuramoto model as a prototype. By combining the representation theory of the symmetric group with Frobenius norm order parameters, we identified three asymptotic symmetry regimes, symmetric, antisymmetric, and mixed, and showed that each yields a distinct stable combinatorial object: unoriented, oriented, or semi-simplicial. The resulting theorems provide the first rigorous conditions for simplicial complex emergence in adaptive triadic systems.

The framework generalises naturally to higher interaction orders. The following generalised adaptive Kuramoto model serves as a prototypical example:
\[
\dot{\theta}_i = \omega_i + 
\sum_{m=1}^{K} \frac{1}{N^m} 
\sum_{i_1,\dots,i_m=1}^{N} 
A^{(m)}_{ii_1\dots i_m}\, 
f_m(\theta_i,\theta_{i_1},\dots,\theta_{i_m}),
\]
where each interaction tensor $A^{(m)}(t)$ evolves according to a local adaptation law $\dot{A}^{(m)} = g^{(m)}(A^{(m)},\theta)$.
Each tensor can be decomposed into isotypic components under the action of $S_{m+1}$, and the same local boundary and drift criteria can be formulated to detect simplicial emergence at order $m$. Although the combinatorial complexity grows rapidly with $m$, the representation-theoretic machinery remains formally identical. Several directions emerge for future work:
\begin{enumerate}
    \item \textbf{Topological analysis.} With rigorous conditions ensuring simplicial structure, one can now compute homological invariants (e.g.\ Betti numbers or persistent homology) for adaptive higher--order systems, tracking the creation and destruction of higher--dimensional features.
    \item \textbf{Analytical limits.} Studying the dense large-$N$ limit will link the discrete tensor formalism to continuum objects such as hypergraphons, yielding coupled mean-field equations for the evolving kernels and node densities.
    \item \textbf{Applications and data.} The framework could be applied to empirical adaptive systems-such as neural ensembles, multi-agent coordination, or group contagion-where higher-order structure co-evolves with dynamics.
\end{enumerate}

\section*{Code Availability}
The Python code used to generate all simulations and figures in this paper is publicly available on GitHub. The code relies on standard libraries and can be found at
\url{https://github.com/fergal-murphy/simplicial_emergence_hypergraphs}

\section*{Acknowledgments}
   This work was supported by the European Union’s Horizon Europe
   Marie Skłodowska-Curie Actions under the \say{BeyondTheEdge: Higher-Order
   Networks and Dynamics} project (Grant Agreement No. 101120085).

\printbibliography

@article{bick2023higher,
  title={What are higher-order networks?},
  author={Bick, Christian and Gross, Elizabeth and Harrington, Heather A and Schaub, Michael T},
  journal={SIAM Review},
  volume={65},
  number={3},
  pages={686--731},
  year={2023},
  publisher={SIAM}
}

@article{berner2023adaptive,
  title={Adaptive dynamical networks},
  author={Berner, Rico and Gross, Thilo and Kuehn, Christian and Kurths, J{\"u}rgen and Yanchuk, Serhiy},
  journal={Physics Reports},
  volume={1031},
  pages={1--59},
  year={2023},
  publisher={Elsevier}
}

@article{Battiston2020,
  title={Networks beyond pairwise interactions: Structure and dynamics},
  author={Battiston, Federico and Cencetti, Giulia and Iacopini, Iacopo and Latora, Vito and Lucas, Maxime and Patania, Alice and Young, Jean-Gabriel and Petri, Giovanni},
  journal={Physics reports},
  volume={874},
  pages={1--92},
  year={2020},
  publisher={Elsevier}
}

@article{Iacopini2019,
  title={Simplicial models of social contagion},
  author={Iacopini, Iacopo and Petri, Giovanni and Barrat, Alain and Latora, Vito},
  journal={Nature Communications},
  volume={10},
  pages={2485},
  year={2019},
  publisher={Nature Publishing Group}
}

@article{Petri2014,
  title={Homological scaffolds of brain functional networks},
  author={Petri, Giovanni and Expert, Paul and Turkheimer, Federico and Carhart-Harris, Robin and Nutt, David and Hellyer, Peter J and Vaccarino, Francesco},
  journal={Journal of The Royal Society Interface},
  volume={11},
  number={101},
  pages={20140873},
  year={2014},
  publisher={The Royal Society}
}

@article{Reimann2017,
  title={Cliques of neurons bound into cavities provide a missing link between structure and function},
  author={Reimann, Michael W and Nolte, Max and Scolamiero, Martina and Turner, Katharine and Perin, Rodrigo and Chindemi, Giuseppe and D{\l}otko, Pawe{\l} and Levi, Ran and Hess, Kathryn and Markram, Henry},
  journal={Frontiers in computational neuroscience},
  volume={11},
  pages={266051},
  year={2017},
  publisher={Frontiers}
}

@article{Gross2006,
  title={Epidemic dynamics on an adaptive network},
  author={Gross, Thilo and D’Lima, C J D and Blasius, Bernd},
  journal={Physical Review Letters},
  volume={96},
  number={20},
  pages={208701},
  year={2006},
  publisher={APS}
}

@article{contreras2024beyond,
  title={Beyond directed hypergraphs: heterogeneous hypergraphs and spectral centralities},
  author={Contreras-Aso, Gonzalo and Criado, Regino and Romance, Miguel},
  journal={Journal of Complex Networks},
  volume={12},
  number={4},
  pages={cnae037},
  year={2024},
  publisher={Oxford University Press}
}

@article{millan2020explosive,
  title={Explosive higher-order Kuramoto dynamics on simplicial complexes},
  author={Mill{\'a}n, Ana P and Torres, Joaqu{\'\i}n J and Bianconi, Ginestra},
  journal={Physical Review Letters},
  volume={124},
  number={21},
  pages={218301},
  year={2020},
  publisher={APS}
}

@article{anwar2024synchronization,
  title={Synchronization in adaptive higher-order networks},
  author={Anwar, Md Sayeed and Jenifer, S Nirmala and Muruganandam, Paulsamy and Ghosh, Dibakar and Carletti, Timoteo},
  journal={Physical Review E},
  volume={110},
  number={6},
  pages={064305},
  year={2024},
  publisher={APS}
}

@article{xu2021spectrum,
  title={Spectrum of extensive multiclusters in the Kuramoto model with higher-order interactions},
  author={Xu, Can and Skardal, Per Sebastian},
  journal={Physical Review Research},
  volume={3},
  number={1},
  pages={013013},
  year={2021},
  publisher={APS}
}

@article{delabays2019dynamical,
  title={Dynamical equivalence between Kuramoto models with first-and higher-order coupling},
  author={Delabays, Robin},
  journal={Chaos: An Interdisciplinary Journal of Nonlinear Science},
  volume={29},
  number={11},
  year={2019},
  publisher={AIP Publishing}
}

@article{munyayev2023cyclops,
  title={Cyclops states in repulsive Kuramoto networks: The role of higher-order coupling},
  author={Munyayev, Vyacheslav O and Bolotov, Maxim I and Smirnov, Lev A and Osipov, Grigory V and Belykh, Igor},
  journal={Physical Review Letters},
  volume={130},
  number={10},
  pages={107201},
  year={2023},
  publisher={APS}
}

@article{dutta2023impact,
  title={Impact of phase lag on synchronization in frustrated Kuramoto model with higher-order interactions},
  author={Dutta, Sangita and Mondal, Abhijit and Kundu, Prosenjit and Khanra, Pitambar and Pal, Pinaki and Hens, Chittaranjan},
  journal={Physical Review E},
  volume={108},
  number={3},
  pages={034208},
  year={2023},
  publisher={APS}
}

@article{gao2023dynamics,
  title={Dynamics on networks with higher-order interactions},
  author={Gao, Z and Ghosh, D and Harrington, HA and Restrepo, JG and Taylor, D},
  journal={Chaos: An Interdisciplinary Journal of Nonlinear Science},
  volume={33},
  number={4},
  year={2023},
  publisher={AIP Publishing}
}

@article{chiba2015proof,
  title={A proof of the Kuramoto conjecture for a bifurcation structure of the infinite-dimensional Kuramoto model},
  author={Chiba, Hayato},
  journal={Ergodic Theory and Dynamical Systems},
  volume={35},
  number={3},
  pages={762--834},
  year={2015},
  publisher={Cambridge University Press}
}

@article{acebron2005kuramoto,
  title={The Kuramoto model: A simple paradigm for synchronization phenomena},
  author={Acebr{\'o}n, Juan A and Bonilla, Luis L and P{\'e}rez Vicente, Conrad J and Ritort, F{\'e}lix and Spigler, Renato},
  journal={Reviews of Modern Physics},
  volume={77},
  number={1},
  pages={137--185},
  year={2005},
  publisher={APS}
}

@article{mulas2022random,
  title={Random walks and Laplacians on hypergraphs: When do they match?},
  author={Mulas, Raffaella and Kuehn, Christian and B{\"o}hle, Tobias and Jost, J{\"u}rgen},
  journal={Discrete Applied Mathematics},
  volume={317},
  pages={26--41},
  year={2022},
  publisher={Elsevier}
}

@article{carletti2020random,
  title={Random walks on hypergraphs},
  author={Carletti, Timoteo and Battiston, Federico and Cencetti, Giulia and Fanelli, Duccio},
  journal={Physical review E},
  volume={101},
  number={2},
  pages={022308},
  year={2020},
  publisher={APS}
}

@article{jost2021normalized,
  title={Normalized Laplace operators for hypergraphs with real coefficients},
  author={Jost, J{\"u}rgen and Mulas, Raffaella},
  journal={Journal of Complex Networks},
  volume={9},
  number={1},
  pages={cnab009},
  year={2021},
  publisher={Oxford University Press}
}

@article{carletti2021random,
  title={Random walks and community detection in hypergraphs},
  author={Carletti, Timoteo and Fanelli, Duccio and Lambiotte, Renaud},
  journal={Journal of Physics: Complexity},
  volume={2},
  number={1},
  pages={015011},
  year={2021},
  publisher={IOP Publishing}
}

@article{mulas2021spectral,
  title={Spectral theory of Laplace operators on oriented hypergraphs},
  author={Mulas, Raffaella and Zhang, Dong},
  journal={Discrete mathematics},
  volume={344},
  number={6},
  pages={112372},
  year={2021},
  publisher={Elsevier}
}

@article{zhang2023higher,
  title={Higher-order interactions shape collective dynamics differently in hypergraphs and simplicial complexes},
  author={Zhang, Yuanzhao and Lucas, Maxime and Battiston, Federico},
  journal={Nature communications},
  volume={14},
  number={1},
  pages={1605},
  year={2023},
  publisher={Nature Publishing Group UK London}
}

@article{gross2008adaptive,
  title={Adaptive coevolutionary networks: a review},
  author={Gross, Thilo and Blasius, Bernd},
  journal={Journal of the Royal Society Interface},
  volume={5},
  number={20},
  pages={259--271},
  year={2008},
  publisher={The Royal Society London}
}

@article{ha2016synchronization,
  title={Synchronization of Kuramoto oscillators with adaptive couplings},
  author={Ha, Seung-Yeal and Noh, Se Eun and Park, Jinyeong},
  journal={SIAM Journal on Applied Dynamical Systems},
  volume={15},
  number={1},
  pages={162--194},
  year={2016},
  publisher={SIAM}
}

@article{bdolach2025tipping,
  title={Tipping in an adaptive climate network model},
  author={Bdolach, Tom and Kurths, J{\"u}rgen and Yanchuk, Serhiy},
  journal={Chaos: An Interdisciplinary Journal of Nonlinear Science},
  volume={35},
  number={5},
  year={2025},
  publisher={AIP Publishing}
}

@article{skardal2020higher,
  title={Higher order interactions in complex networks of phase oscillators promote abrupt synchronization switching},
  author={Skardal, Per Sebastian and Arenas, Alex},
  journal={Communications Physics},
  volume={3},
  number={1},
  pages={218},
  year={2020},
  publisher={Nature Publishing Group UK London}
}

@article{dutta2024transition,
  title={Transition to synchronization in the adaptive Sakaguchi-Kuramoto model with higher-order interactions},
  author={Dutta, Sangita and Kundu, Prosenjit and Khanra, Pitambar and Hens, Chittaranjan and Pal, Pinaki},
  journal={Physical Review E},
  volume={110},
  number={6},
  pages={064317},
  year={2024},
  publisher={APS}
}

@book{fulton2013representation,
  title={Representation Theory: A First Course},
  author={Fulton, William and Harris, Joe},
  volume={129},
  year={2013},
  publisher={Springer Science \& Business Media}
}

@article{gallo1993directed,
  title={Directed hypergraphs and applications},
  author={Gallo, Giorgio and Longo, Giustino and Pallottino, Stefano and Nguyen, Sang},
  journal={Discrete applied mathematics},
  volume={42},
  number={2-3},
  pages={177--201},
  year={1993},
  publisher={Elsevier}
}

@article{DUCOURNAU201491,
title = {Random walks in directed hypergraphs and application to semi-supervised image segmentation},
journal = {Computer Vision and Image Understanding},
volume = {120},
pages = {91-102},
year = {2014},
author = {Aur\'elien Ducournau and Alain Bretto},
keywords = {Directed hypergraph, Random walks, Image segmentation, Semi-supervised learning},
}

@book{boyd2004convex,
  title={Convex optimization},
  author={Boyd, Stephen P and Vandenberghe, Lieven},
  year={2004},
  publisher={Cambridge university press}
}

@article{strogatz2000kuramoto,
  title={From Kuramoto to Crawford: exploring the onset of synchronization in populations of coupled oscillators},
  author={Strogatz, Steven H},
  journal={Physica D: Nonlinear Phenomena},
  volume={143},
  number={1-4},
  pages={1--20},
  year={2000},
  publisher={Elsevier}
}

@book{battiston2022higher,
  title={Higher-order systems},
  author={Battiston, Federico and Petri, Giovanni},
  year={2022},
  publisher={Springer}
}

@article{jardon2020fast,
  title={On fast-slow consensus networks with a dynamic weight},
  author={Jard{\'o}n-Kojakhmetov, Hildeberto and Kuehn, Christian},
  journal={Journal of Nonlinear Science},
  volume={30},
  number={6},
  pages={2737--2786},
  year={2020},
  publisher={Springer}
}

@article{golovin2024polyadic,
  title={Polyadic opinion formation: the adaptive voter model on a hypergraph},
  author={Golovin, Anastasia and M{\"o}lter, Jan and Kuehn, Christian},
  journal={Annalen der Physik},
  volume={536},
  number={7},
  pages={2300342},
  year={2024},
  publisher={Wiley Online Library}
}

@article{moussaid2013social,
  title={Social influence and the collective dynamics of opinion formation},
  author={Moussa{\"\i}d, Mehdi and K{\"a}mmer, Juliane E and Analytis, Pantelis P and Neth, Hansj{\"o}rg},
  journal={PloS one},
  volume={8},
  number={11},
  pages={e78433},
  year={2013},
  publisher={Public Library of Science San Francisco, USA}
}

@article{monsted2017evidence,
  title={Evidence of complex contagion of information in social media: An experiment using Twitter bots},
  author={M{\o}nsted, Bjarke and Sapie{\.z}y{\'n}ski, Piotr and Ferrara, Emilio and Lehmann, Sune},
  journal={PloS one},
  volume={12},
  number={9},
  pages={e0184148},
  year={2017},
  publisher={Public Library of Science San Francisco, CA USA}
}

@article{xie2007modeling,
  title={Modeling the coevolution of topology and traffic on weighted technological networks},
  author={Xie, Yan-Bo and Wang, Wen-Xu and Wang, Bing-Hong},
  journal={Physical Review E—Statistical, Nonlinear, and Soft Matter Physics},
  volume={75},
  number={2},
  pages={026111},
  year={2007},
  publisher={APS}
}

@article{morrison2008phenomenological,
  title={Phenomenological models of synaptic plasticity based on spike timing},
  author={Morrison, Abigail and Diesmann, Markus and Gerstner, Wulfram},
  journal={Biological cybernetics},
  volume={98},
  number={6},
  pages={459--478},
  year={2008},
  publisher={Springer}
}

@article{horstmeyer2020adaptive,
  title={Adaptive voter model on simplicial complexes},
  author={Horstmeyer, Leonhard and Kuehn, Christian},
  journal={Physical Review E},
  volume={101},
  number={2},
  pages={022305},
  year={2020},
  publisher={APS}
}
\end{document}